\definecolor{MyLinkColor}{rgb}{0,0,0.4}
\newcommand{\R}{{\mathbb R}}
\newcommand{\Z}{{\mathbb Z}}
\newcommand{\s}{\mathbb S}
\newcommand{\kH}{\mathcal{H}}
\newcommand{\kL}{\mathcal{L}}
\newcommand{\wh}{\widehat}
\newcommand{\wt}{\widetilde}
\newcommand{\re}{\mathop{\rm Re}\nolimits}
\newcommand{\ov}{\overline}
\newcommand{\p}{\partial}
\newcommand{\e}{\varepsilon}
\newcommand{\rd}{\mathrm{d}}
\newcommand{\0}{\Omega}
\newtheorem{thm}{Theorem}[section]
\newtheorem{prop}[thm]{Proposition}
\newtheorem{lemma}[thm]{Lemma}
\newtheorem{cor}[thm]{Corollary}
\newtheorem{rem}[thm]{Remark}
\theoremstyle{remark}
\numberwithin{equation}{section} 
\newcommand{\tblue}{\color{black}}
\newcommand{\tred}{\color{black}}
\begin{document}

 \author[B.--V. Matioc]{Bogdan--Vasile Matioc}
\address{Fakult\"at f\"ur Mathematik, Universit\"at Regensburg,   93053 Regensburg, Germany.}
\email{bogdan.matioc@ur.de}

 \author[L. Roberti]{Luigi Roberti}
\address{Fakult\"at f\"ur Mathematik, Universit\"at Wien,  Oskar--Morgenstern--Platz 1, 1090, Vienna, Austria.}
\email{luigi.roberti@univie.ac.at}

\author[Ch. Walker]{Christoph Walker}
\address{Leibniz Universit\"at Hannover,
Institut f\"ur Angewandte Mathematik,
Welfengarten 1,
30167 Hannover,
Germany.}
\email{walker@ifam.uni-hannover.de}

\title[Quasilinear parabolic equations in critical spaces]{Quasilinear parabolic equations with  superlinear  nonlinearities in critical spaces}

\thanks{}

\begin{abstract}
Well-posedness in time-weighted spaces for  quasilinear (and semilinear) parabolic evolution equations $u'=A(u)u+f(u)$ is established 
in a certain critical case of  strict inclusion $\mathrm{dom}(f)\subsetneq \mathrm{dom}(A)$ for the domains of the  (superlinear) function $u\mapsto f(u)$ and  the quasilinear part $u\mapsto A(u)$. 
Based upon regularizing effects of parabolic equations, it is proven that the solution map generates a semiflow  in a critical intermediate space.  
The applicability of the abstract results is demonstrated by several examples including a model for atmospheric flows and semilinear and quasilinear evolution equations with scaling invariance for which well-posedness in the critical scaling invariant intermediate spaces is shown.
\end{abstract}

\subjclass[2020]{35K59; 35K58; 35B35; 76U60}
\keywords{Quasilinear parabolic equations; semilinear parabolic equations;  critical spaces;   scaling invariance; atmospheric flows}

\maketitle

\pagestyle{myheadings}
\markboth{\sc{B.-V.~Matioc, L. Roberti, $\&$ Ch. Walker}}{\sc{Quasilinear parabolic equations  in critical spaces}}

\section{Introduction}

Solvability of abstract quasilinear parabolic problems 
\begin{equation}\label{QCP}
u'=A(u)u+f(u)\,,\quad t>0\,,\qquad u(0)=u_0\,,
\end{equation} 
where $A(u)$ is the generator of an analytic semigroup on an appropriate Banach space, has a long history and is well-established by means of 
semigroup or maximal regularity theory, see e.g. \cite{ AT88, Am88,Amann_Teubner,ClementLi,CS01,G88,Lu84,Lu85,Lu85b,P02,PS16,PSW18,MW_MOFM20}  (and e.g. \cite{An90,DaPL88,DaP96,DaPG79,Lu87,L95} for even fully nonlinear problems). 
In most of the considered cases, the involved nonlinearities $u\mapsto f(u)$ and~${u\mapsto A(u)}$ have the same domain of definition.
In this article we  focus on the case  of strict inclusions $\mathrm{dom}(f)\subsetneq \mathrm{dom}(A)$,
 which yields more flexibility when applying the theory to concrete problems. 
 To  handle the regularity gap induced by  such a strict inclusion we shall use time-weighted spaces.

To set the stage, let   $E_0$ and $E_1$ be Banach spaces  over $\mathbb{K}\in \{\R,\mathbb{C}\}$ with   continuous  and dense embedding 
$E_1\hookrightarrow E_0$.
For each $\theta\in (0,1)$, we denote by $(\cdot,\cdot)_\theta$   an arbitrary admissible interpolation functor of 
 exponent $\theta$  (see \cite[I.~Section~2.11]{LQPP}) and define~${E_\theta:= (E_0,E_1)_\theta}$ as the corresponding  interpolation space with norm~\mbox{$\|\cdot\|_\theta$}.
 
We fix  exponents
\begin{subequations}\label{ASS}
\begin{equation}\label{AS1}
\quad 0<\gamma< \beta<\alpha< \xi<1\qquad \text{ with }\qquad   q:=\frac{1+\gamma-\alpha}{\xi-\alpha}>1\,.
\end{equation}
Moreover, we assume that there are  interpolation functors $\{\cdot,\cdot\}_{\alpha/\xi}$ and $\{\cdot,\cdot\}_{\gamma/\eta}$ of  exponents~$\alpha/\xi$  and~$\gamma/\eta$ for  $\eta\in\{\alpha,\beta,\xi\}$, respectively, such that
\begin{equation}\label{Vor3q}
   E_\alpha\doteq \{E_0,E_\xi\}_{\alpha/\xi} \,, \qquad  E_\gamma\doteq \{E_0,E_\eta\}_{\gamma/\eta} \,, \quad \eta\in\{\beta,\alpha,\xi\}\,.
\end{equation} 
For the quasilinear part we assume for some open subset {\tblue $O_\beta$ of $E_\beta$} that
\begin{equation}\label{AS2}
A\in C^{1-}\big(O_\beta,\mathcal{H}(E_1,E_0) \big)\,,
\end{equation}
 where  $C^{1-}$ refers to local Lipschitz continuity and $\mathcal{H}(E_1,E_0)$ is the open subset of the bounded linear operators $\kL(E_1,E_0)$  
consisting of generators $A$ of strongly continuous analytic semigroups  $(e^{tA})_{t\ge 0}$ on $E_0$.

 Moreover, denoting by $O_\xi $ the open subset  of $E_\xi$ defined by $O_\xi:=O_\beta\cap E_\xi$, we assume that the semilinear part $f:O_\xi\to E_\gamma$ {\tblue 
is locally Lipschitz continuous in the sense that, for each $R>0$, there is a constant~${N(R)>0}$ such that 
\begin{equation}\label{AS4}
\|f(w)-f(v)\|_{\gamma}\le  N(R)\big[1+\|w\|_{\xi}^{q-1}+\|v\|_{\xi}^{q-1}\big]\big[\big(1+\|w\|_{\xi}+\|v\|_{\xi}\big) \|w-v\|_{\beta}+\|w-v\|_{\xi}\big]
\end{equation}
for $w,\,v\in  O_\xi \cap\bar{\mathbb{B}}_{E_\beta}(0,R)$, 
where $q>1$ is defined in~\eqref{AS1} and $\bar{\mathbb{B}}_{E_\beta}(0,R)$ is the closed ball in $E_\beta$ centered  at $0$ with radius $R$. Of course, ~\eqref{AS4} includes the case that
\begin{equation*}
\|f(w)-f(v)\|_{\gamma}\le  N(R)\big[1+\|w\|_{\xi}^{q-1}+\|v\|_{\xi}^{q-1}\big] \|w-v\|_{\xi}
\end{equation*}
for $w,\,v\in  O_\xi \cap\bar{\mathbb{B}}_{E_\beta}(0,R)$.
}

\end{subequations}

Under assumptions~\eqref{ASS} we shall prove  that problem~\eqref{QCP} is well-posed  in $O_\alpha:=O_\beta\cap E_\alpha$. 
 We emphasize that the semilinear part $f$ needs not be defined on $O_\alpha$ and possibly requires more regularity 
 than the quasilinear part $A$ since $E_\xi\hookrightarrow E_\alpha\hookrightarrow E_\beta$ by~\eqref{AS1}. Before stating the main result  precisely, let us comment on the imposed assumptions. If the nonlinearities~$A$ and~$f$
are defined and Lipschitz continuous on the same set $O_\beta$, a general  and powerful result on the well-posedness of~\eqref{QCP} is  \cite[Theorem~12.1]{Amann_Teubner} 
(for proofs see \cite{Am88} and \cite{MW_MOFM20}).  
 It states that if
\begin{equation*}
(A,f)\in C^{1-}\big(O_\beta,\mathcal{H}(E_1,E_0)\times E_\gamma\big)\,,\qquad u_0\in O_\alpha\,,\quad 0<\gamma\le \beta<\alpha<1\,,
\end{equation*}
 then problem~\eqref{QCP} admits a unique maximal strong solution and the solution map induces a semiflow on $O_\alpha$. 
 Our purpose  herein is to prove a similar result under the assumptions stated above.
In fact, we extend our previous work~\cite{MW_PRSE}, where we  proved the semiflow property  when the second part of assumption~\eqref{AS1} is replaced by the assumption  that the strict inequality
\begin{equation}\label{strict}
 1\leq q<\frac{1+\gamma-\alpha}{ \xi-\alpha}
\end{equation} 
holds, to the critical case $q(\xi-\alpha)=1+\gamma-\alpha$.
 The latter covers important applications  and yields a semiflow in spaces that are optimal in a certain sense. As pointed out {\tblue in  \cite{PW17,PSW18,HvNWV_III}}, for 
evolution equations which possess a scaling invariance, the  space $E_\alpha$ corresponding to the critical case $q(\xi-\alpha)=1+\gamma-\alpha$ 
may be  invariant under  this scaling and is therefore in this sense a critical  space (see Example~\ref{Exam2} and Example~\ref{Exam3} in Section~\ref{Sec:4} for more details).
{\tblue In \cite{PW17,PSW18} the quasilinear problem~\eqref{QCP} is thoroughly  investigated  in the context of  maximal 
$L_p$-regularity for the critical case~\eqref{AS1}, with} an additional structural condition
 on the Banach spaces~$E_0$ and~$E_1$ (i.e. $E_0$ needs to be  a UMD space and~${H_p^1(\R^+,E_0)\cap L_p(\R^+,E_1)\hookrightarrow H_p^{1-\theta}(\R^+,E_\theta)}$ for each $\theta\in [0,1]$). 
{\tblue Recently it has been shown that these structural conditions are in fact not needed for the critical case in the maximal 
$L_p$-regularity setting, see \cite[Section~18.2]{HvNWV_III} for details and also \cite[Section~18.5]{HvNWV_III} for a discussion of the history of the critical case.} {\tred It is worth emphasizing that  our approach covers situations for which maximal regularity does not seem to be available (see Example~\ref{Exam4}).}

In our setting, we impose only the interpolation property~\eqref{Vor3q} as an additional assumption to the subcritical case \eqref{strict}. Regarding this property, we point out:

\begin{rem}\label{R00}
Assumption~\eqref{Vor3q} is automatically satisfied if~$(\cdot,\cdot)_\theta=[\cdot,\cdot]_\theta$ is the complex interpolation functor for every~$\theta\in\{\gamma,\beta,\alpha,\xi\}$, {\tred or  if~$(\cdot,\cdot)_\theta=(\cdot,\cdot)_{\theta,p}$ is the real interpolation functor with parameter $p\in [1,\infty]$ for every~$\theta\in\{\gamma,\beta,\alpha,\xi\}$, or if~$(\cdot,\cdot)_\theta=(\cdot,\cdot)_{\theta,\infty}^0$ is the continuous interpolation functor for every~$\theta\in\{\gamma,\beta,\alpha,\xi\}$.
 This is explicitly stated in~\cite[I.Remarks~2.11.2~(b)]{LQPP} and a consequence of  the reiteration theorems for the complex and the real methods (see~\cite[I.Section~2.8]{LQPP}).}
\end{rem}

For the subcritical regime~\eqref{strict},  well-posedness of \eqref{QCP} is shown in \cite{CPW10} in time-weighted $L_p$-spaces in the framework of maximal \mbox{$L_p$-regularity} assuming that~$f$ satisfies {\tblue a   local Lipschitz continuity property  similar to~\eqref{AS4}} in real interpolation spaces
with $\gamma=0$. We also refer to \cite{LeCroneSimonett20} for a result in the same spirit based on the concept of
continuous maximal regularity in time-weighted spaces and assuming \eqref{AS4} 
 with strict inequality~\eqref{strict} in the scale of continuous interpolation spaces.

We now state our main Theorem~\ref{T:0x} that extends our previous research~\cite{MW_PRSE} on the subcritical case~\eqref{strict} to the critical case~\eqref{AS1}. It is comparable to the above {\tblue mentioned results  \cite{PW17,PSW18,LeCroneSimonett20,HvNWV_III} for} the critical case~\eqref{AS1} outside the setting of maximal regularity and for (almost) arbitrary admissible interpolation functors:

\begin{thm}\label{T:0x}
 Suppose~\eqref{ASS}. 

 \begin{itemize}
{\bf\item[(i)] {\bf\emph{(Existence)}}} Given any $u_0\in O_\alpha$,  there is $t^+=t^+(u_0)\in(0,\infty]$ such that the quasilinear Cauchy problem \eqref{QCP} possesses  a maximal strong  solution
\begin{equation} \label{reg:T0x}
\begin{aligned}
 u(\cdot;u_0)&\in C^1\big((0,t^+),E_0\big)\cap C\big((0,t^+),E_1\big)\cap  C \big([0,t^+),O_\alpha\big)\cap  C^{\alpha-\beta}\big([0,t^+),E_\beta\big)
\end{aligned}
\end{equation}
 with
$$
\lim_{t\to 0}t^{\xi-\alpha}\|u(t;u_0)\|_\xi=0\,.
$$
 
 {\bf \item[(ii)]  {\bf\emph{(Uniqueness)}}}   If 
\begin{equation*} 
 \wt u\in C^1\big((0,T],E_0\big)\cap C\big((0,T],E_1\big) \cap C^{\vartheta}\big([0,T],O_\beta\big) 
\end{equation*}
with
$$
\lim_{t\to 0}t^{\xi-\alpha}\|\wt u(t)\|_\xi=0
$$
is a solution to \eqref{QCP} for some $T>0$ and $\vartheta\in(0,1)$, then    $T< t^+(u_0)$ and~${\wt u=u(\cdot;u_0)}$ on $[0,T]$. \vspace{1mm}

{\bf \item[(iii)] {\em (Continuous dependence)}} The map $(t,u^0)\mapsto u(t;u_0)$ is a semiflow on $O_\alpha$. \vspace{1mm}

{\bf \item[(iv)] {\em (Global existence)}} If  $u([0,t^+(u_0));u_0)$ is relatively compact in $O_\alpha$, then~${t^+(u_0)=\infty}$.\vspace{1mm} 

{\bf \item[(v)] {\em (Blow-up criteria)}} Let $u_0\in O_\alpha$ be such that $t^+(u_0)<\infty$. \vspace{1mm}
\begin{itemize}
\item[{\bf (a)}]  If  $u(\cdot;u_0):[0,t^+(u_0))\to E_\alpha$ is uniformly continuous, then
\begin{equation}\label{boundx} 
 \lim_{t\nearrow t^+(u_0)}\mathrm{dist}_{E_\alpha}\big( u(t;u_0),\partial O_\alpha\big)=0\,.
\end{equation}

\item[{\bf (b)}]  If $u(\cdot;u_0):[0,t^+(u_0))\to E_\beta$ is uniformly H\"older continuous, then
\begin{equation}\label{v1}
 \underset{t\nearrow t^+(u_0)}{\limsup}\|f(u(t;u_0))\|_{0}=\infty\quad\text{ or }\quad \lim_{t\nearrow t^+(u_0)}\mathrm{dist}_{E_\beta}\big( u(t;u_0),\partial O_\beta\big)=0\,.
\end{equation}

\item[{\bf (c)}]  If $E_1$ is compactly embedded in $E_0$, then 
\begin{equation}\label{v2}
\limsup_{t\nearrow t^+(u_0)}\|u(t;u_0)\|_{\theta}=\infty\quad\text{ or }\quad  \lim_{t\nearrow t^+(u_0)}\mathrm{dist}_{E_\alpha}\big( u( [0,t];u_0),\partial  O_\alpha\big)=0
\end{equation}
for each $\theta\in (\alpha,1)$.
\end{itemize}

\end{itemize}
\end{thm}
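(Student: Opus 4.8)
The plan is to fix $u_0\in O_\alpha$, freeze the leading operator at $A_0:=A(u_0)\in\mathcal{H}(E_1,E_0)$, and recast \eqref{QCP} on a short interval $[0,T]$ as the fixed point problem
\begin{equation*}
u(t)=e^{tA_0}u_0+\int_0^t e^{(t-s)A_0}\big[(A(u(s))-A_0)u(s)+f(u(s))\big]\,\rd s=:\Phi(u)(t)\,.
\end{equation*}
I would look for $u$ in the complete metric space $\mathbb{V}_T(\rho)$ of functions with $u(0)=u_0$, $u\in C([0,T],E_\alpha)$, $(s\mapsto s^{\xi-\alpha}u(s))\in C([0,T],E_\xi)$, and $\|u\|_{\mathbb{V}_T}:=\sup_{0<t\le T}t^{\xi-\alpha}\|u(t)\|_\xi\le\rho$, the little-$o$ behaviour $\lim_{t\to0}t^{\xi-\alpha}\|u(t)\|_\xi=0$ being built into the space. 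The Hölder bound in $E_\beta$ and the continuity into $O_\alpha$ are recovered a posteriori from the same estimates.

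The analytic semigroup furnishes the smoothing bounds $\|e^{tA_0}\|_{\kL(E_\theta,E_\sigma)}\le Ct^{-(\sigma-\theta)}$ for $\theta\le\sigma$, and since $f(0)=0$ the estimate \eqref{AS4} with $v=0$ gives $\|f(u(s))\|_\gamma\le N\|u(s)\|_\xi^{q}\le N\rho^{q}s^{-q(\xi-\alpha)}$. Passing from $E_\gamma$ to $E_\xi$ and inserting the critical relation $q(\xi-\alpha)=1+\gamma-\alpha$ yields the borderline integral
\begin{equation*}
t^{\xi-\alpha}\int_0^t(t-s)^{-(\xi-\gamma)}s^{-q(\xi-\alpha)}\,\rd s=t^{\xi-\alpha}\,t^{(1-\xi+\gamma)+(\alpha-\gamma)-1}\,B\big(1-\xi+\gamma,\alpha-\gamma\big)\,,
\end{equation*}
whose total power of $t$ is exactly $0$. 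This is the heart of the critical case: unlike the subcritical regime \eqref{strict}, the semilinear contribution produces no gain $T^{\e}$ but only the factor $C\rho^{q-1}$, and the contraction estimate, built on \eqref{AS4}, is governed by the same singularity $s^{-q(\xi-\alpha)}$ and hence by the same borderline power. I expect the main obstacle to be precisely this: closing the self-mapping and contraction \emph{without any time-decay}. The resolution is the little-$o$ structure together with \eqref{Vor3q}: the interpolation identity $E_\alpha\doteq\{E_0,E_\xi\}_{\alpha/\xi}$ guarantees $\lim_{t\to0}t^{\xi-\alpha}\|e^{tA_0}u_0\|_\xi=0$, so that choosing $T$ small makes $\sup_{t\le T}t^{\xi-\alpha}\|e^{tA_0}u_0\|_\xi=:\rho_0$ as small as desired; taking $\rho:=2\rho_0$, the self-mapping inequality $C\rho^{q}+\rho_0\le\rho$ and the contraction constant $C\rho^{q-1}<1$ both hold because $q>1$ and $\rho$ is small. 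The quasilinear perturbation $(A(u)-A_0)u$ is treated with \eqref{AS2}: its Lipschitz norm times the smoothing from $E_0$ to $E_\xi$ contributes a factor controlled by $\sup\|u(s)-u_0\|_\beta$ and a genuine positive power of $T$, hence is small for $T$ small, and the analogous difference estimates close the contraction. Once a mild solution is obtained, parabolic regularity upgrades it to \eqref{reg:T0x}: the time-weight and the Hölder continuity of $s\mapsto A(u(s))$ in $E_\beta$ permit the standard bootstrap to $u\in C^1((0,t^+),E_0)\cap C((0,t^+),E_1)$, while the fixed-point estimates directly give the stated $C^{\alpha-\beta}$-regularity and the decay in $E_\xi$; extending to the maximal interval is routine by re-solving from later times.

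For the remaining assertions I would argue as follows. Uniqueness (ii) follows from a Gronwall argument in the $\mathbb{V}$-norm: two solutions in the stated class satisfy the same integral equation, and the Lipschitz bounds \eqref{AS4}, \eqref{AS2} combined with the borderline integral give $\|u-\wt u\|_{\mathbb{V}_\tau}\le C(\rho^{q-1}+\tau^{\e})\|u-\wt u\|_{\mathbb{V}_\tau}$ on a small $[0,\tau]$, forcing coincidence, which then propagates. Continuous dependence and the semiflow property (iii) come from the contraction being uniform in $u_0$ over small balls, yielding local Lipschitz dependence of the fixed point on the data, while the cocycle identity follows from uniqueness and autonomy. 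Global existence (iv) is the contrapositive of a lower bound on the existence time that is uniform over relatively compact sets of initial data in $O_\alpha$: if $u([0,t^+);u_0)$ were relatively compact and $t^+<\infty$, re-solving from times near $t^+$ would extend the solution. Finally, the blow-up criteria (v) are the contrapositive of continuation: under (a) uniform continuity in $E_\alpha$ with positive distance to $\partial O_\alpha$, under (b) boundedness of $\|f(u)\|_0$ with positive $E_\beta$-distance to $\partial O_\beta$, and under (c) the compact embedding $E_1\hookrightarrow E_0$ turning boundedness in $E_\theta$ ($\theta>\alpha$) into relative compactness in $O_\alpha$, one constructs a uniform existence time and continues past $t^+$, contradicting maximality.
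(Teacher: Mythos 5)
There is a genuine gap, and it lies exactly where the quasilinear structure enters. Your fixed-point map
\[
\Phi(u)(t)=e^{tA_0}u_0+\int_0^t e^{(t-s)A_0}\big[(A(u(s))-A_0)u(s)+f(u(s))\big]\,\rd s
\]
is not well-defined on the space $\mathbb{V}_T(\rho)$ you propose. Assumption~\eqref{AS2} only provides $A(w)\in\kL(E_1,E_0)$, so the term $(A(u(s))-A_0)u(s)$ requires $u(s)\in E_1$; your space only guarantees $u(s)\in E_\xi$ with $\xi<1$ (and with the $E_\xi$-norm blowing up like $s^{-(\xi-\alpha)}$), and the paper assumes no extrapolation of $A(w)-A_0$ to intermediate spaces. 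Moreover, the natural repair --- augmenting the norm with a weighted $E_1$-bound $\sup_t t^{1-\alpha}\|u(t)\|_1$, as in the classical frozen-coefficient (Sobolevskii) method --- runs into a second obstruction: the perturbation $(A(u(s))-A_0)u(s)$ lands only in $E_0$, and the smoothing $\|e^{(t-s)A_0}\|_{\kL(E_0,E_1)}\lesssim (t-s)^{-1}$ is not integrable, so the weighted $E_1$ self-mapping estimate cannot be closed by size estimates alone; one needs H\"older-in-time control of the inhomogeneity (the full Sobolevskii--Tanabe--Lunardi machinery), which your sketch does not supply. Note also that the smallness factor $\sup_s\|u(s)-u_0\|_\beta$ you invoke for the contraction must be uniform over the fixed-point space, so some $E_\beta$-modulus must be built into the space rather than ``recovered a posteriori.''

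The paper sidesteps both obstructions by never treating the quasilinear part as a perturbation. Its fixed-point space $\mathcal{V}_{T,R}\cap\bar{\mathbb{B}}_{C_\mu((0,T],E_\xi)}(0,L)$ builds in a H\"older bound in $E_\beta$ precisely so that, for each candidate $v$, the map $s\mapsto A(v(s))$ is H\"older continuous with uniformly bounded H\"older seminorm, whence Amann's theory furnishes the nonautonomous evolution operator $U_{A(v)}(t,s)$ together with the uniform smoothing estimates~\eqref{qe:a} (this is where~\eqref{Vor3q} enters) and the Lipschitz dependence~\eqref{e23} on $v$. The fixed-point map is then $F(u)(t)=U_{A(u)}(t,0)u_0+\int_0^t U_{A(u)}(t,\tau)f(u(\tau))\,\rd\tau$, so only the semilinear term $f$ appears under the integral and no $E_1$-control of the unknown is ever needed. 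Your handling of that semilinear term --- the borderline Beta-function integral at the critical exponent, the little-$o$ behaviour of $t^{\xi-\alpha}\|e^{tA_0}u_0\|_\xi$ via density and interpolation, and the use of $q>1$ to extract smallness from $\rho^{q-1}$ in place of a power of $T$ --- is correct and coincides with the paper's steps, as do your outlines of (iii)--(v); but as it stands the construction of the local solution, which everything else rests on, does not go through.
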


 {\tblue The local existence and uniqueness result of Theorem~\ref{T:0x} relies on Banach's fixed point theorem, 
 where we point out the technical difficulties induced by the singularity at $t=0$ of $t\mapsto f(u(t))$. 
 These are handled by working in a framework of time-weighted spaces as previously in~\cite{MW_PRSE} for the subcritical case~\eqref{strict}.
}

Of course, Theorem~\ref{T:0x} is  valid also  in the particular  case of   semilinear  parabolic problems
\begin{equation}\label{SCP}
u'=Au+f(u)\,,\quad t>0\,,\qquad u(0)= u_0\,,
\end{equation}
with 
\begin{subequations}\label{Vor}
\begin{equation}\label{Vor1}
A\in \mathcal{H}(E_1,E_0)\,.
\end{equation} 
However,  for semilinear problems Theorem~\ref{T:0x} can be sharpened.
Indeed, we now  impose
 \begin{equation}\label{Vor2}
  0\leq \gamma< \alpha<  \xi\leq  1\,,\qquad  q:= \frac{1+\gamma-\alpha}{\xi-\alpha}>1\,.
\end{equation} 
 Note that \eqref{Vor2} implies in particular that $(\gamma,\xi)\neq(0,1)$. 
Moreover,  if $\xi<1$, we assume that there exists an interpolation functor  $\{\cdot,\cdot\}_{\alpha/\xi}$ of exponent $\alpha/\xi$  and, if~${\gamma>0}$, we assume  for~${\eta\in\{\alpha,\xi\}}{\tred \setminus\{1\}}$ that 
there are  interpolation functors $\{\cdot,\cdot\}_{\gamma/\eta}$ of exponent~$\gamma/\eta$ such that
\begin{equation}\label{Vor3}
   E_\alpha\doteq \{E_0,E_\xi\}_{\alpha/\xi}\,\,\, \text{if $\xi<1$}  \,, \qquad  E_\gamma\doteq \{E_0,E_\eta\}_{\gamma/\eta} \,\,\, \text{if $\gamma>0$\,, \ $\eta\in\{\alpha,\xi\}{\tred \setminus\{1\}}$\,.}
\end{equation} 
 Let $O_\alpha$ be an open subset {\tblue of $E_\alpha $  and put $O_\xi:=O_\alpha\cap E_\xi$. 
 The  nonlinearity~${f:O_\xi\to E_\gamma}$ is supposed to be locally Lipschitz continuous  in the sense that for each $R>0$ there exists a positive constant~$N(R)$ such that 
\begin{equation}\label{Vor5}
\|f(w)-f(v)\|_{\gamma}\le   N(R)\big[1+\|w\|_{\xi}^{q-1}+\|v\|_{\xi}^{q-1}\big]\big[\big(1+\|w\|_{\xi}+\|v\|_{\xi}\big) \|w-v\|_{\alpha}+\|w-v\|_{\xi}\big] 
\end{equation}
for all $w,\,v\in  O_\xi \cap\bar{\mathbb{B}}_{E_\alpha}(0,R)$.}
\end{subequations}
The well-posedness result  in the framework of the semilinear parabolic problem~\eqref{SCP} reads as follows:

\begin{thm}\label{T:2}
 Suppose~\eqref{Vor}. 

\begin{itemize}
{\bf \item[(i)] \emph{(Existence and uniqueness)}} Given any $u_0\in O_\alpha$,  there is $t^+=t^+(u_0)\in(0,\infty]$ such that 
the semilinear  Cauchy problem \eqref{SCP} possesses a unique maximal strong  solution
\begin{equation} \label{reg:T0}
\begin{aligned}
 u(\cdot;u_0)&\in C^1\big((0, t^+),E_0\big)\cap C\big((0,t^+),E_1\big)\cap  C \big([0,t^+),O_\alpha\big) 
\end{aligned}
\end{equation}
 with
$$
\lim_{t\to 0}t^{\xi-\alpha}\|u(t;u_0)\|_\xi=0\,.
$$

{\bf \item[(ii)] {\em (Continuous dependence)}} The map $(t,u^0)\mapsto u(t;u_0)$ is a semiflow on $O_\alpha$. \vspace{1mm}

{\bf \item[(iii)] {\em (Global existence)}} If  $u([0,t^+(u_0));u_0)$ is relatively compact in $O_\alpha$, then~${t^+(u_0)=\infty}$.\vspace{1mm} 

{\bf \item[(iv)] {\em (Blow-up criteria)}} Let $u_0\in O_\alpha$ be such that $t^+(u_0)<\infty$. \vspace{1mm}
\begin{itemize}
\item[{\bf (a)}]  If  $u(\cdot;u_0):[0,t^+(u_0))\to E_\alpha$ is uniformly continuous, then
\begin{equation*} 
 \lim_{t\nearrow t^+(u_0)}\mathrm{dist}_{E_\alpha}\big( u(t;u_0),\partial O_\alpha\big)=0\,.
\end{equation*}

 \item[{\bf (b)}] If
$$
\underset{t\nearrow t^+(u_0)}{\limsup}\|f(u(t;u_0))\|_{0}<\infty\,,
$$
then
\begin{equation*} 
 \lim_{t\nearrow t^+(u_0)}\mathrm{dist}_{E_\alpha}\big( u( t;u_0),\partial  O_\alpha\big)=0\,.
\end{equation*}
\end{itemize}

\end{itemize}
\end{thm}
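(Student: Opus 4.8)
The plan is to recast the semilinear problem~\eqref{SCP} as a fixed point equation for the variation-of-constants map and to apply Banach's fixed point theorem in a time-weighted space, following the scheme underlying Theorem~\ref{T:0x} but in a streamlined form. The decisive simplification is that, since $A\in\mathcal{H}(E_1,E_0)$ is now independent of $u$ by~\eqref{Vor1}, the analytic semigroup $(e^{tA})_{t\ge0}$ is available directly; no parameter-dependent evolution operators are needed and the whole iteration can be controlled at the $E_\alpha$- and $E_\xi$-levels alone. This is exactly why the weaker Lipschitz hypothesis~\eqref{Vor5}, formulated on $E_\alpha$-balls instead of on the $E_\beta$-balls of~\eqref{AS4}, is enough here, why no intermediate exponent $\beta$ and no $E_\beta$-H\"older regularity enter the solution class~\eqref{reg:T0}, and why the boundary exponents $\gamma=0$ and $\xi=1$ can be admitted. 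In particular Theorem~\ref{T:2} is \emph{not} a mere corollary of Theorem~\ref{T:0x}, and I would give a direct argument.

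First I would record the smoothing estimates. Interpolating the analyticity bounds $\|e^{tA}\|_{\mathcal{L}(E_0,E_\sigma)}\le Ct^{-\sigma}e^{\omega t}$ and $\|e^{tA}\|_{\mathcal{L}(E_\sigma,E_\sigma)}\le Ce^{\omega t}$ by means of the identities~\eqref{Vor3} (with the obvious simplifications when $\gamma=0$ or $\xi=1$) yields
\[
\|e^{tA}\|_{\mathcal{L}(E_\theta,E_\sigma)}\le Ct^{-(\sigma-\theta)}e^{\omega t},\qquad (\theta,\sigma)\in\{(\alpha,\xi),(\gamma,\alpha),(\gamma,\xi)\},
\]
together with the decay $t^{\xi-\alpha}\|e^{tA}u_0\|_\xi\to0$ as $t\to0$ for every $u_0\in E_\alpha$, which again rests on~\eqref{Vor3} and encodes that $u_0$ lies in the critical intermediate space. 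From $f(0)=0$ in~\eqref{Vor4} and~\eqref{Vor5} I also note the superlinear growth $\|f(w)\|_\gamma\le 2N(R)\|w\|_\xi^{q}$ on $O_\xi\cap\bar{\mathbb{B}}_{E_\alpha}(0,R)$. Next, fixing $u_0\in O_\alpha$ and $r_0>0$ with $\bar{\mathbb{B}}_{E_\alpha}(u_0,r_0)\subset O_\alpha$, I would set, for small $T,\rho>0$, $[u]_t:=\sup_{0<s\le t}s^{\xi-\alpha}\|u(s)\|_\xi$ and work in the complete metric space of all $u\in C([0,T],E_\alpha)\cap C((0,T],E_\xi)$ with $u(0)=u_0$, $\sup_{0\le t\le T}\|u(t)-u_0\|_\alpha\le r_0$, $[u]_T\le\rho$ and $\lim_{t\to0}[u]_t=0$, together with the map $(\Phi u)(t):=e^{tA}u_0+\int_0^te^{(t-s)A}f(u(s))\,\mathrm{d}s$. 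The core is the Beta-function estimate: inserting $\|u(s)\|_\xi\le[u]_Ts^{-(\xi-\alpha)}$ and the smoothing bounds gives
\[
t^{\xi-\alpha}\Big\|\int_0^te^{(t-s)A}f(u(s))\,\mathrm{d}s\Big\|_\xi\le C[u]_T^{\,q}\,t^{\xi-\alpha}\!\int_0^t(t-s)^{-(\xi-\gamma)}s^{-q(\xi-\alpha)}\,\mathrm{d}s=C\,B(a,b)\,[u]_T^{\,q},
\]
with $a=1-(\xi-\gamma)$ and $b=1-q(\xi-\alpha)=\alpha-\gamma$; the critical relation $q(\xi-\alpha)=1+\gamma-\alpha$ makes the powers of $t$ cancel exactly, $t^{\xi-\alpha}\cdot t^{a+b-1}=t^{0}$, while the constraint $(\gamma,\xi)\neq(0,1)$ from~\eqref{Vor2} is precisely what guarantees $a>0$ (and $b>0$ since $\gamma<\alpha$). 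Replacing $\xi-\gamma$ by $\alpha-\gamma$ bounds the $E_\alpha$-norm of the Duhamel term by $C[u]_t^{\,q}\to0$ as $t\to0$, giving continuity at $t=0$ with value $u_0$, and the same computation for $f(u)-f(v)$ via the full form of~\eqref{Vor5} yields $[\Phi u-\Phi v]_T\le C\big([u]_T^{\,q-1}+[v]_T^{\,q-1}\big)[u-v]_T$ and a matching $E_\alpha$-bound.

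The main obstacle is exactly this criticality. In the subcritical regime~\eqref{strict} the $t$-exponent $a+b-1$ above is strictly positive, so the nonlinear contribution carries a factor $t^{a+b-1}\to0$ and a contraction follows for small $T$ regardless of the size of the data; here that exponent vanishes, so shrinking $T$ alone is useless. The contraction must instead be extracted from the smallness of $[u]_T$ near $t=0$, which is available precisely because the free term decays, $t^{\xi-\alpha}\|e^{tA}u_0\|_\xi\to0$, and because the superlinearity $q>1$ turns $[u]_T$ into the genuinely small factor $[u]_T^{\,q-1}$. Concretely I would first choose $\rho$ with $C\rho^{q-1}\le1/2$ and then $T$ so small that $e^{\cdot A}u_0$ contributes at most $\rho/2$ to $[\,\cdot\,]_T$ and at most $r_0/2$ in $E_\alpha$; then $\Phi$ is a self-map and a $1/2$-contraction, and its unique fixed point is the mild solution with $\lim_{t\to0}t^{\xi-\alpha}\|u(t)\|_\xi=0$. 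To promote it to the strong solution~\eqref{reg:T0}, the local Lipschitz continuity of $f$ and the time-regularity of $u$ render $s\mapsto f(u(s))$ locally H\"older continuous into $E_\gamma$ on $(0,T]$, whence the standard regularity theory for analytic semigroups gives $u\in C^1((0,T],E_0)\cap C((0,T],E_1)$ solving~\eqref{SCP} classically, while $u\in C([0,T],O_\alpha)$ holds by construction; uniqueness in this class follows from the contraction estimate on small intervals and a continuation argument, and $u$ is then extended to a maximal interval $[0,t^+)$.

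For parts (ii)--(iv) I would argue as follows. Applying the contraction estimate to two data $u_0,v_0\in O_\alpha$ and the corresponding solutions gives Lipschitz dependence of the short-time solution on the initial datum in $C([0,T],E_\alpha)$; together with uniqueness and the cocycle identity for restarted solutions this yields the semiflow property~(ii). For~(iii), relative compactness of the orbit in $O_\alpha$ bounds $\|u(t)\|_\alpha$ and keeps $u(t)$ uniformly away from $\partial O_\alpha$, and since the local existence time constructed above depends only on these two quantities, the solution can be restarted with a uniform time step, forcing $t^+=\infty$. For the blow-up criteria~(iv): in case (a), uniform continuity of $u:[0,t^+)\to E_\alpha$ forces $u(t)$ to converge in $E_\alpha$ as $t\nearrow t^+$, and were the limit interior to $O_\alpha$ the solution could be continued past $t^+$, contradicting maximality, so $\mathrm{dist}_{E_\alpha}(u(t),\partial O_\alpha)\to0$; in case (b), $\limsup_{t\nearrow t^+}\|f(u(t))\|_0<\infty$ makes the Duhamel integral converge in $E_\alpha$ as $t\nearrow t^+$, again producing a limit of $u(t)$ in $E_\alpha$ and reducing to the continuation argument of case (a).
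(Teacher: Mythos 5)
Your local theory is sound and is essentially the paper's own argument: the paper's Proposition~\ref{P:1} uses exactly the fixed point map \eqref{FFF} in the weighted space $\bar{\mathbb{B}}_{C([0,T],E_\alpha)}(\ov u,L)\cap \bar{\mathbb{B}}_{C_\mu((0,T],E_\xi)}(0,L)$, with the same mechanism you describe — the Beta-function identity with exact cancellation of the $t$-powers, the contraction coming not from shrinking $T$ but from the smallness factor $L^{q-1}$ supplied by $q>1$ and by the decay $t^{\xi-\alpha}\|e^{tA}u_0\|_\xi\to 0$ of the free term. Your identification of where $(\gamma,\xi)\neq(0,1)$ enters is also correct. (One small caveat: your one-line promotion to a strong solution needs a case distinction — if $\gamma=0$ you must first bootstrap local H\"older continuity of $u$ into $E_\xi$ on $(0,T]$ before \eqref{Vor5} gives H\"older continuity of $f(u(\cdot))$, whereas if $\xi=1$, $\gamma>0$ you should instead invoke the "continuous and bounded into $E_\gamma$, $\gamma>0$" regularity theorem, since the H\"older bootstrap into $E_1$ is not available.)

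There is, however, a genuine gap in your treatment of parts (ii) and (iii). You assert that "the local existence time constructed above depends only on these two quantities" (a bound on $\|u(t)\|_\alpha$ and the distance to $\partial O_\alpha$), and you restart with a uniform time step on this basis. This is false in the critical setting, and the failure is precisely the criticality phenomenon you yourself isolated in the local proof: your $T$ is chosen so that $\sup_{0<t\le T}t^{\xi-\alpha}\|e^{tA}u_0\|_\xi\le\rho/2$, and the rate of this decay is \emph{not} controlled by $\|u_0\|_\alpha$ and the boundary distance — it depends on how well $u_0$ is approximated in $E_\xi$ (in the subcritical regime \eqref{strict} one gains a free factor $T^{1+\gamma-\alpha-q(\xi-\alpha)}$ and norm-uniform times are recoverable; here one does not). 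What \emph{is} true, and what the paper uses, is that the existence time and radius $(T_s,r_s)$ from the local result are uniform for all data in a ball $\bar{\mathbb{B}}_{E_\alpha}(\ov u,r_s)$ around each fixed center $\ov u$ (because $t^{\xi-\alpha}\|e^{tA}v_0\|_\xi\le \omega_0\|v_0-\ov u\|_\alpha+t^{\xi-\alpha}\|e^{tA}\ov u\|_\xi$). The paper then covers the (relatively) compact orbit $u([0,t_*];u_0)$, respectively $\overline{u([0,t^+);u_0)}$, by finitely many such balls and takes the minimum of the $T_{s_i}$; this yields both the propagation of the Lipschitz estimate in (ii) and the uniform restart step in (iii). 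To repair your argument you must (a) state your local theorem ball-uniformly (all data in $\bar{\mathbb{B}}_{E_\alpha}(u_0,r)$ share the same $T$ and Lipschitz constant, which your construction does give but you never record), and (b) replace "bounded and away from the boundary" by this finite-covering argument, for which the relative compactness hypothesis in (iii) — and the automatic compactness of $u([0,t_*];u_0)$ in (ii) — is exactly what is needed.
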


 {\tblue Lastly, let us assume, additionally to \eqref{Vor}, that
 \begin{subequations}\label{stas}
 \begin{equation}\label{stas1}
 0\in O_\alpha  
 \end{equation}
and that  there exist   constants~$c_0>0$, $\delta\in(0,1)$,  and $q_*>1$ with $q_*(\xi-\alpha)\leq 1+\gamma-\alpha$ such that 
\begin{equation}\label{stas2}
\|f(v)\|_{\gamma}\le   c_0 \|v\|_{\xi}^{q_*},\, \qquad v\in  O_\xi \cap\bar{\mathbb{B}}_{E_\alpha}(0,\delta)\,.
\end{equation}
 \end{subequations}
In particular, \eqref{stas}  ensures that $0$ is a stationary solution to \eqref{SCP}.
In Corollary~\ref{C1} we establish, by exploiting the   superlinear behavior of $f$ near zero in \eqref{stas2},
  the  exponential stability  of this equilibrium in the case when the} spectral bound~${s(A) := \sup\{\re\lambda: \lambda\in\sigma(A)\}}$ of $A$ is negative.

\begin{cor}\label{C1}
 Suppose~\eqref{Vor} and \eqref{stas}, assume that $s(A)<0$, and choose $\varpi\in(0,-s(A))$.
Then, there exists  an open neighborhood~$U_\alpha$  of $0$ in $E_\alpha$ and a constant $M\geq 1$ such that for each~${u_0\in U_\alpha}$ the maximal solution $u(\cdot;u_0)$ to \eqref{SCP} is globally defined, 
that is,  $t^+(u_0)=\infty$, and
$$
 \|u(t;u_0)\|_\alpha+t^{\xi-\alpha}\|u(t;u_0)\|_\xi\le M\, e^{-\varpi t}\,\|u_0\|_\alpha \,,\qquad t>0\,,\quad u_0\in U_\alpha\,.
$$
\end{cor}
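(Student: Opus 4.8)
The plan is to deduce Corollary~\ref{C1} from the global-existence and stability machinery underlying Theorem~\ref{T:2}, exploiting the superlinearity of $f$ near $0$ together with the exponential decay of the linear semigroup $(e^{tA})_{t\ge0}$. Since $s(A)<0$, for the fixed $\varpi\in(0,-s(A))$ there is a constant $M_0\ge1$ with the semigroup estimates
\begin{equation}\label{C1-semi}
\|e^{tA}\|_{\kL(E_0)}\le M_0\,e^{-\varpi t}\,,\qquad t^{\theta}\|e^{tA}\|_{\kL(E_0,E_\theta)}\le M_0\,e^{-\varpi t}\,,\qquad t>0\,,\ \theta\in\{\alpha,\xi\}\,,
\end{equation}
which follow from analyticity of the semigroup and standard smoothing estimates (these are exactly the bounds already used in the proof of Theorem~\ref{T:2}, now with the exponential weight $e^{-\varpi t}$ rather than a polynomial-in-$T$ constant). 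The strategy is to run the same contraction-mapping fixed-point argument as in Theorem~\ref{T:2}, but on the whole half-line $[0,\infty)$ in a weighted space carrying the factor $e^{\varpi t}$, and to show that for $u_0$ small in $E_\alpha$ the fixed point exists globally and inherits the decay.

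First I would introduce, for a small radius $\rho>0$ to be chosen, the complete metric space
\begin{equation}\label{C1-space}
\W_\rho:=\Big\{u\in C\big([0,\infty),E_\alpha\big):\ \sup_{t>0}e^{\varpi t}\|u(t)\|_\alpha+\sup_{t>0}t^{\xi-\alpha}e^{\varpi t}\|u(t)\|_\xi\le \rho\,,\ \lim_{t\to0}t^{\xi-\alpha}\|u(t)\|_\xi=0\Big\}
\end{equation}
equipped with the obvious norm, and define on it the fixed-point map
\begin{equation}\label{C1-fix}
(\Phi u)(t):=e^{tA}u_0+\int_0^t e^{(t-s)A}f(u(s))\,\rd s\,.
\end{equation}
The key point is that, by \eqref{Vor4}--\eqref{Vor5}, $\|f(u(s))\|_\gamma\le c\,\|u(s)\|_\xi^q$, so inserting the weighted bound on $u$ gives $\|f(u(s))\|_\gamma\le c\,\rho^q\,s^{-q(\xi-\alpha)}e^{-q\varpi s}$. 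Since $q(\xi-\alpha)=1+\gamma-\alpha$ by \eqref{Vor2}, applying \eqref{C1-semi} in the form $\|e^{(t-s)A}\|_{\kL(E_\gamma,E_\theta)}\lesssim (t-s)^{-(\theta-\gamma)}e^{-\varpi(t-s)}$ and estimating the convolution yields, for $\theta\in\{\alpha,\xi\}$,
\begin{equation}\label{C1-conv}
t^{\theta-\alpha}e^{\varpi t}\Big\|\int_0^t e^{(t-s)A}f(u(s))\,\rd s\Big\|_\theta\le c\,\rho^q\, t^{\theta-\alpha}\int_0^t (t-s)^{-(\theta-\gamma)}e^{-(q-1)\varpi s}s^{-(1+\gamma-\alpha)}\,\rd s\,,
\end{equation}
where the residual exponential factor $e^{-(q-1)\varpi s}\le1$ can be dropped and the remaining beta-type integral is finite and scale-invariant in $t$ precisely because $q(\xi-\alpha)=1+\gamma-\alpha$; together with $M_0\|u_0\|_\alpha$ from the linear term this bounds the weighted norm of $\Phi u$ by $M_0\|u_0\|_\alpha+c'\rho^q$. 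Choosing $\rho$ and then $\|u_0\|_\alpha$ small, self-mapping $\Phi(\W_\rho)\subset\W_\rho$ and the contraction estimate (using the Lipschitz bound \eqref{Vor5} with the two $\|\cdot\|_\xi^{q-1}$ prefactors producing the same integrable kernel) both hold.

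Having obtained a global fixed point $u\in\W_\rho$, uniqueness against the maximal solution from Theorem~\ref{T:2}(i) forces $t^+(u_0)=\infty$ and identifies it with $u(\cdot;u_0)$; the defining bound of $\W_\rho$ then reads exactly as the asserted decay estimate with $M:=M_0+c'\rho^{q-1}$ after absorbing the $\rho$-dependence into $\|u_0\|_\alpha$, and $U_\alpha$ is taken to be the ball in $E_\alpha$ of the chosen small radius intersected with $O_\alpha$. I expect the main obstacle to be the convolution estimate \eqref{C1-conv}: one must verify that in the \emph{critical} case $q(\xi-\alpha)=1+\gamma-\alpha$ the temporal integral is genuinely convergent and homogeneous of the correct degree in $t$ so that the weight $t^{\theta-\alpha}e^{\varpi t}$ exactly cancels, rather than producing a logarithmic or growing factor --- this is the same borderline phenomenon handled in the proof of Theorem~\ref{T:0x} (cf.\ the discussion around \eqref{512a} and \eqref{contr}), and here it is what makes the superlinearity $q>1$ indispensable both for self-mapping and for the decay rate $e^{-\varpi t}$.
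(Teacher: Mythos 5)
Your core estimates are correct, and the borderline phenomenon you flag as the main obstacle is in fact harmless, for a reason worth making explicit: with $q(\xi-\alpha)=1+\gamma-\alpha$ the substitution $s=t\sigma$ turns your convolution integral into $t^{\alpha-\theta}\,\mathsf{B}(1+\gamma-\theta,\alpha-\gamma)$, and both Beta arguments are strictly positive precisely because $q>1$ (which is equivalent to $1+\gamma-\xi>0$) and $\gamma<\alpha$; this is exactly the paper's $\mathsf{B}_\theta$ from \eqref{defBtheta}, since $1-\mu q=\alpha-\gamma$, so the weight cancels exactly and no logarithm appears. Your route is, however, genuinely different from the paper's. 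You build a global solution from scratch by a contraction on $[0,\infty)$ in an exponentially weighted space, drawing the exponential slack from the superlinearity (the factor $e^{-(q-1)\varpi s}\le 1$). The paper instead works with the maximal solution already furnished by Theorem~\ref{T:2}: it fixes $\varpi<\varsigma<-s(A)$, shows by a continuity/bootstrap argument (the exit time $t_*$ and estimate \eqref{bbb}) that the norms $\|u\|_{C([0,t],E_\alpha)}$ and $\|u\|_{C_\mu((0,t],E_\xi)}$ never reach the thresholds $R$, $L$ fixed in \eqref{L}, deduces $t^+(u_0)=\infty$ from the blow-up criterion of Theorem~\ref{T:2}~(iv)(b), and only then obtains the decay by an absorption estimate for $z(t)=\sup_{\tau\le t}(\|u(\tau)\|_\alpha+\tau^\mu\|u(\tau)\|_\xi)e^{\varpi\tau}$, drawing the slack from $\varsigma-\varpi>0$ rather than from $q>1$. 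The paper's scheme never needs to revisit regularity because every estimate is performed on a function already known to be a strong solution; your scheme is more self-contained but therefore owes exactly those verifications.

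That is where your proof has a genuine gap: the closing claim that ``uniqueness against the maximal solution from Theorem~\ref{T:2}(i) forces $t^+(u_0)=\infty$'' does not follow as stated. Your fixed point is a priori only a mild solution, and your space $\mathcal{W}_\rho$ does not even guarantee continuity with values in $E_\xi$ (needed for $\tau\mapsto f(u(\tau))$ to be a well-defined continuous $E_\gamma$-valued integrand), let alone the regularity $C^1((0,T],E_0)\cap C((0,T],E_1)$ that the uniqueness assertion of Theorem~\ref{T:2} presupposes. You must either upgrade the fixed point to a strong solution — e.g.\ via the translation trick $u_\e=u(\cdot+\e)$ and the parabolic regularity results invoked in step (iii) of the proof of Proposition~\ref{P:0} — or argue without it: identify your fixed point $\wt u$ with $u(\cdot;u_0)$ on small intervals (both are fixed points of the local contraction of Proposition~\ref{P:1}), propagate the coincidence up to $t^+(u_0)$, and then, if $t^+(u_0)<\infty$, note that the orbit $u([0,t^+(u_0));u_0)=\wt u([0,t^+(u_0)))$ lies in the compact set $\wt u([0,t^+(u_0)])\subset O_\alpha$, so Theorem~\ref{T:2}~(iii) yields the contradiction $t^+(u_0)=\infty$. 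Two smaller repairs: the radius $\rho$ must be taken smaller than an $R$ with $\bar{\mathbb{B}}_{E_\alpha}(0,R)\subset O_\alpha$, since \eqref{Vor5} is only valid on $O_\xi\cap\bar{\mathbb{B}}_{E_\alpha}(0,R)$; and the final constant cannot be obtained by ``absorbing the $\rho$-dependence into $\|u_0\|_\alpha$'' (the self-mapping bound $M_0\|u_0\|_\alpha+c'\rho^q$ is useless as $\|u_0\|_\alpha\to0$ with $\rho$ fixed) — instead estimate the weighted norm of $u=\Phi u$ against itself, $\|u\|\le M_0\|u_0\|_\alpha+c'\rho^{q-1}\|u\|$, and absorb to get $M=2M_0$ once $c'\rho^{q-1}\le 1/2$, which is precisely the paper's $z(t)$-argument transplanted to your setting.
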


The proof of Theorem~\ref{T:0x} is presented in Section~\ref{Sec2}. 
 The results of Theorem~\ref{T:2} and Corollary~\ref{C1} for the semilinear case  are proven  in Section~\ref{Sec3}.
Finally, in Section~\ref{Sec:4} we provide some applications of  these results  including {\tred a coagulation-fragmentation equation with size diffusion in an $L_1$-framework,} an asymptotic model for  atmospheric flows and    
   certain semilinear and quasilinear heat equations  featuring scaling invariances. 

\section{ The Quasilinear Problem: Proof of Theorem~\ref{T:0x}}\label{Sec2}

The proof of Theorem~\ref{T:0x} is based on Proposition~\ref{P:0} below.
Before stating this result, we   recall that given a Banach space $E$,  $\mu\in\R$, and $T>0$, the time weighted space
\[
 C_\mu\big((0,T],E\big):=\big\{u\in C((0,T], E)\,:\, \text{$t^\mu  \|u(t)\|_E\to 0$ for $t\to0$}\big\} 
 \]
 is a Banach space with the norm
\[
\|u\|_{C_\mu((0,T],E)}:=\sup_{t\in(0,T]}t^\mu\|u(t)\|_E\,. 
\]
For a suitable choice {\tblue of   $E$, this space}  plays a crucial role in the subsequent analysis.

\begin{prop}\label{P:0}
Assume \eqref{ASS} and let $\mu:=\xi-\alpha$. 
Then, given $\ov u\in O_\alpha,$ there exist constants~$r,T\in(0,1)$  such that for each $u_0\in\bar{\mathbb{B}}_{E_\alpha}(\ov u,r)\subset O_\alpha$ the Cauchy problem \eqref{QCP}  possesses a  strong  solution
\begin{equation}\label{regs'}
\begin{aligned}
u&\in C\big((0,T],E_1\big)\cap C^1\big((0,T],E_0\big)\cap C\big([0,T],O_\alpha\big)\\
&\quad \,\cap C_\mu\big((0,T],E_\xi\big)
 \cap  C^{\alpha-\beta}\big([0,T],E_\beta\big)\,.
\end{aligned}
\end{equation}
 Moreover, there exists a constant $c=c(\ov u)>0$ such that 
\begin{equation}\label{eq:14}
 \|u(t;u_0)-u(t;  u_1)\|_\alpha\leq c\|u_0- u_1\|_\alpha\,,\qquad u_0,\, u_1\in \bar{\mathbb{B}}_{E_\alpha}(\ov u,r)\,,\quad 0\leq t\leq T\,.
\end{equation}
 Finally, if 
\begin{equation}\label{regs''}
\wt u\in C\big((0,T],E_1\big)\cap C^1\big((0,T],E_0\big)\cap C_\mu\big((0,T],E_\xi\big) \cap  C^{\vartheta}\big([0,T],O_\beta\big)
\end{equation}
with $\vartheta\in(0,1)$ is a solution  to \eqref{QCP}, then $\wt u=u(\cdot; u_0)$.
\end{prop}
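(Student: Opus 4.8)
The plan is to reformulate the Cauchy problem \eqref{QCP} as a fixed point equation and apply Banach's fixed point theorem in a suitable closed subset of the time-weighted space, exploiting the superlinear estimate $\|f(w)\|_\gamma = O(\|w\|_\xi^q)$ with $q>1$ to obtain the contraction property even in the critical case. First I would freeze the quasilinear coefficient: given $\ov u\in O_\alpha$, set $A_0:=A(\ov u)\in\mathcal{H}(E_1,E_0)$ and write $A(u)=A_0+(A(u)-A_0)$, so that \eqref{QCP} becomes the semilinear-type problem $u'=A_0 u + (A(u)-A_0)u + f(u)$ with $u(0)=u_0$. Using the analytic semigroup $(e^{tA_0})_{t\ge0}$ I would represent the solution via the variation-of-constants formula
\begin{equation*}
u(t)=e^{tA_0}u_0+\int_0^t e^{(t-s)A_0}\big[(A(u(s))-A_0)u(s)+f(u(s))\big]\,\rd s\,,
\end{equation*}
and define the fixed point map $\Phi$ as the right-hand side on a ball in the space of functions satisfying \eqref{regs'}, in particular controlled by the $C_\mu((0,T],E_\xi)$-norm with $\mu=\xi-\alpha$.

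The key estimates rest on the smoothing properties of the analytic semigroup, namely $\|e^{tA_0}\|_{\kL(E_\sigma,E_\rho)}\le C t^{-(\rho-\sigma)}$ for $0\le\sigma\le\rho\le1$ (together with the decay of $\|A_0 e^{tA_0}\|$), combined with the interpolation identities \eqref{Vor3q} which let me interpolate $E_\alpha$ and $E_\gamma$ between $E_0$ and the higher spaces $E_\xi,E_\eta$. The linear term $e^{tA_0}u_0$ is handled first: since $u_0\in E_\alpha$ and $\mu=\xi-\alpha$, the smoothing estimate gives $t^\mu\|e^{tA_0}u_0\|_\xi\le C\|u_0\|_\alpha$, and one checks $t^\mu\|e^{tA_0}u_0\|_\xi\to0$ as $t\to0$ using density of $E_1$ in $E_\alpha$. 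For the nonlinear integral term I would bound the $E_\xi$-norm of $\int_0^t e^{(t-s)A_0}f(u(s))\,\rd s$ by $\int_0^t (t-s)^{-(\xi-\gamma)}\|f(u(s))\|_\gamma\,\rd s$, use \eqref{AS4} with $f(0)=0$ to get $\|f(u(s))\|_\gamma\le C\|u(s)\|_\xi^q\le C s^{-\mu q}\|u\|_{C_\mu}^q$, and then the critical relation $q(\xi-\alpha)=1+\gamma-\alpha$, equivalently $\mu q = 1-(\xi-\gamma)$, makes the resulting Beta-integral $\int_0^t(t-s)^{-(\xi-\gamma)}s^{-\mu q}\,\rd s$ scale exactly like $t^{-\mu}$, so the time-weight is preserved. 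The quasilinear perturbation term is estimated analogously using \eqref{AS2}: $\|A(u(s))-A_0\|_{\kL(E_1,E_0)}\lesssim\|u(s)-\ov u\|_\beta$ stays small for small $r,T$, and $\|(A(u(s))-A_0)u(s)\|_0\lesssim\|u(s)\|_1$, which I would control after establishing parabolic regularization into $E_1$.

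The contraction property is where the superlinearity is essential. For two iterates the difference of the $f$-terms picks up the prefactor $(\|w\|_\xi^{q-1}+\|v\|_\xi^{q-1})$ from \eqref{AS4}, which is $O((r+\text{ball radius})^{q-1})$ and hence can be made arbitrarily small by shrinking the ball, yielding a Lipschitz constant strictly below one; the quasilinear difference term is controlled by shrinking $T$ and $r$ so that $\|A(u)-A_0\|$ is small. This delivers a unique fixed point $u$ and the Lipschitz estimate \eqref{eq:14} by a standard perturbation of the fixed point in the data $u_0$. I expect the main obstacle to be the critical borderline integrability: in the subcritical case \eqref{strict} one has a genuine positive power $t^{\delta}$ to spare, but here the Beta-integral exponents land exactly on the boundary $\mu q + (\xi-\gamma)=1$, so the convergence of the integral and the recovery of the $\to0$ limit defining $C_\mu$ require care — one must verify the integral converges (the singularities $(t-s)^{-(\xi-\gamma)}$ and $s^{-\mu q}$ are each integrable since both exponents are $<1$) and extract the vanishing of $t^\mu\|u(t)\|_\xi$ as $t\to0$ from the smallness of the $C_\mu$-norm rather than from an extra power of $t$. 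The additional regularity assertions in \eqref{regs'} — continuity into $E_1$, $C^1$ into $E_0$, and Hölder continuity $C^{\alpha-\beta}([0,T],E_\beta)$ — then follow by bootstrapping from the fixed point using maximal-regularity-type or semigroup smoothing arguments and the interpolation inequality between $E_\alpha$ and $E_\xi$, while the final uniqueness claim for solutions in the class \eqref{regs''} is obtained by a Gronwall-type comparison using the same estimates restricted to the time-weighted norm.
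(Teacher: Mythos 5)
Your frozen-coefficient decomposition $u'=A_0u+(A(u)-A_0)u+f(u)$ with $A_0:=A(\ov u)$ contains a genuine gap precisely at the point you defer: the term $(A(u(s))-A_0)u(s)$ only makes sense if $u(s)\in E_1$, so your fixed point space must carry (weighted) $E_1$-valued functions, and the self-map property then requires a weighted $E_1$-bound on $\Phi(u)$. But the perturbation term lands merely in $E_0$ (assumption \eqref{AS2} gives $A(u)-A_0\in\kL(E_1,E_0)$, nothing better), and the semigroup smoothing estimate $\|e^{\tau A_0}\|_{\kL(E_0,E_1)}\lesssim \tau^{-1}$ produces the non-integrable singularity
\begin{equation*}
\Big\|\int_0^t e^{(t-s)A_0}\big(A(u(s))-A_0\big)u(s)\,\rd s\Big\|_1 \;\lesssim\; \int_0^t (t-s)^{-1}\,\big\|\big(A(u(s))-A_0\big)u(s)\big\|_0\,\rd s\,,
\end{equation*}
which diverges no matter how small $r$, $T$, or the ball radius are. (Note that the $f$-term does not suffer from this, since $f$ takes values in $E_\gamma$ with $\gamma>0$, giving the integrable kernel $(t-s)^{\gamma-1}$; the problem is exclusively the quasilinear remainder.) So ``establishing parabolic regularization into $E_1$'' for the iterates is not something you can bootstrap afterwards --- it is exactly what fails inside the fixed point argument. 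The classical way to rescue the frozen-coefficient approach (Sobolevski\u{\i}-type arguments) is to build weighted H\"older-in-time norms into the fixed point space and exploit the cancellation in $\int_0^t e^{(t-s)A_0}[g(s)-g(t)]\,\rd s$; this is substantial additional machinery that your outline does not contain.

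The paper avoids this obstruction by a structurally different fixed point map: it absorbs the full quasilinear part into the non-autonomous evolution operator, setting $F(u)(t)=U_{A(u)}(t,0)u_0+\int_0^tU_{A(u)}(t,\tau)f(u(\tau))\,\rd\tau$, where $U_{A(u)}$ exists because the fixed point space $\mathcal{W}=\mathcal{V}_{T,R}\cap\bar{\mathbb{B}}_{C_\mu((0,T],E_\xi)}(0,L)$ enforces H\"older continuity of $t\mapsto A(u(t))$ in $\kL(E_1,E_0)$ (this is where the assumption $\beta<\alpha$ and the H\"older-in-$E_\beta$ condition enter). Consequently no $E_1$-norm of $u$ ever appears in the contraction estimates --- only the weighted $E_\xi$- and the $E_\beta$-norms --- and the $C((0,T],E_1)\cap C^1((0,T],E_0)$ regularity is recovered a posteriori from the linear non-autonomous theory applied to $F(u)(\e+\cdot)$. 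Your treatment of the semilinear term $f$, including the use of the critical identity $\mu q=1+\gamma-\alpha$ to make the Beta-integrals scale exactly as $t^{-\mu}$ and the superlinearity $q>1$ for the contraction, does match the paper's mechanism; but as it stands, the quasilinear part of your argument does not close.
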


\begin{proof} We divide the proof into several steps.\vspace{2mm}

\noindent{\bf (i) Preliminaries.} 
Since $O_\beta$ is open in $E_\beta$,  there is $R\in(0,1)$ such that $\bar{\mathbb{B}}_{E_\beta}(\ov u,2R)\subset O_\beta$.
Moreover, recalling~\eqref{AS2},  we may assume that 
\begin{equation}\label{qe:1}
\|A(x) - A(y)\|_{\mathcal{L}(E_1,E_0)} \le K\|x-y\|_\beta\ , \quad x,\,y \in \bar{\mathbb{B}}_{E_\beta}(\ov u,2R)\, , 
\end{equation}
for some $K>0$
and, using~\cite[I.Theorem~1.3.1]{LQPP}, that  there are~${\kappa\ge 1}$ and $\omega>0$ such that
\begin{equation}\label{qe:2}
A(x)\in \mathcal{H}(E_1,E_0;\kappa,\omega)\,,\quad x\in \bar{\mathbb{B}}_{E_\beta}(\ov u,2R)\,.
\end{equation}

Fix $\rho\in (0,\alpha-\beta)$.
Given $T \in (0,1)$, we define a closed subset of $C([0,T], E_\beta)$ by
$$
\mathcal{V}_{T,R} := \left\{v \in C\big([0,T], \bar{\mathbb{B}}_{E_\beta}(\ov u,2R)\big) \,:\, \|v(t) - v(s)\|_\beta \le |t-s|^{\rho}\,, ~0 \le s, t \le T\right\}\,.
$$
Clearly, $\ov u\in \mathcal{V}_{T,R}$. Moreover, if $v\in\mathcal{V}_{T,R}$, then \eqref{qe:2} ensures 
\begin{subequations}\label{QE}
\begin{equation}\label{QE1}
A(v(t))\in \mathcal{H}(E_1,E_0;\kappa,\omega)\,,\quad t\in [0,T]\,,
\end{equation}
while \eqref{qe:1} implies
\begin{equation}\label{QE2}
A(v)\in C^\rho\big([0,T],\mathcal{L}(E_1,E_0)\big)\quad\text{and}\quad \sup_{0\le s<t\le T}
\frac{\|  A(v(t))-  A(v(s))\|_{\mathcal{L}(E_1,E_0)}}{( t-s)^\rho}\le K\,.
\end{equation}
\end{subequations} 
Therefore, for each $v\in\mathcal{V}_{T,R}$, the evolution operator
$$
U_{A(v)}(t,s)\,,\quad 0\le s\le t\le T\,,
$$
 for $A(v)$ is well-defined due to~\cite[II.Corollary~4.4.2]{LQPP}, and \eqref{QE} enables us to use the results of \cite[II.Section~5]{LQPP} (see assumption (5.0.1) therein).
In particular, we recall from \cite[II.Lemma~5.1.3]{LQPP} and~\eqref{Vor3q} that there exists a constant $\omega_0\ge 1$ such that
\begin{equation}\label{qe:a}
\|U_{A(v)}(t,s)\|_{\mathcal{L}(E_\theta)}+(t-s)^{\theta-\vartheta}\,\|U_{A(v)}(t,s)\|_{\mathcal{L}(E_\vartheta,E_\theta)}  \le \omega_0  \,, \quad 0\le s\le t\le T\,,
\end{equation}
for $\theta,\vartheta\in \{0,\gamma,\beta,\alpha,\xi,1\}$ with $\vartheta\le \theta$ and   $(\vartheta,\theta)\not\in\{(\beta,\alpha)\,, (\beta,\xi)\}$.  
Moreover, we  deduce  from~\eqref{qe:1}, \eqref{QE}, and \cite[II.Lemma~5.1.4]{LQPP}  that there is  
$\omega_1\ge 1$ such that, for $u,v\in \mathcal{V}_{T,R}$,
\begin{equation}\label{e23}
(t-\tau)^{\theta-\vartheta}\|U_{A(u)}(t,\tau)-U_{A(v)}(t,\tau)\|_{\mathcal{L}(E_\vartheta,E_\theta)}\le \omega_1\,\|u-v\|_{C([0,T],E_\beta)}\,,\quad 0\le \tau<t\le T\,,
\end{equation}
for  $\theta,\,\vartheta\in \{\gamma,\beta,\alpha,\xi\}$.  
We also note from \cite[II.Theorem~5.3.1]{LQPP} (with $f=0$ therein) that there is $\omega_2\ge 1$ with
 \begin{align}\label{p2}
\|U_{A(u)}(t,0)-  U_{A(u)}(s,0)\|_{\mathcal{L}(E_\alpha,E_\beta)}\leq \omega_2 (t-s)^{\alpha-\beta}\,,\quad 0\le s< t\le T\,,  
\end{align}
while, due to \cite[II.Equation~(5.3.8)]{LQPP} (with $r=s$ therein), we may also assume that
 \begin{equation}\label{p1}
\|U_{A(u)}(t,s)-1\|_{\mathcal{L}(E_\alpha,E_\beta)}\le \omega_2(t-s)^{\alpha-\beta}\,,\quad 0\le s< t\le T\,,
\end{equation}
for $u\in \mathcal{V}_{T,R}$.

We next observe that 
\begin{equation}\label{n}
U_{A(u)}(\cdot,0)u_0\in C_\mu((0,T],E_\xi)\,,\qquad u_0\in E_\alpha\,,\quad u\in\mathcal{V}_{T,R}\,.
\end{equation}
 Indeed,  we clearly have $U_{A(u)}(\cdot,0)u_0\in C((0,T],E_\xi)$. Moreover, since~\eqref{qe:a} ensures
\begin{align*}
t^\mu\|U_{A(u)}(t,0)u_0\|_\xi&\leq t^\mu\|U_{A(u)}(t,0)(u_0-  u_*)\|_\xi+t^\mu\|U_{A(u)}(t,0)  u_*\|_\xi\\[1ex]
&\leq \omega_0\| u_0-  u_*\|_\alpha+\omega_0 t^\mu\| u_*\|_\xi
\end{align*}
for $0< t\leq T$ and any $ u_*\in E_\xi$, we may use the density of $E_\xi$ in $E_\alpha$ and the premise $\mu>0$ to deduce
that $t^\mu\|U_{A(u)}(t,0)u_0\|_\xi\to 0$ as $t\to 0$, hence $U_{A(u)}(\cdot,0)u_0\in C_\mu((0,T],E_\xi)$ with
$$
\|U_{A(u)}(\cdot,0)u_0\|_{ C_\mu((0,T],E_\xi)}\le \omega_0\|u_0\|_\alpha\,.
$$
In particular, 
\begin{equation}\label{M}
M(T):=\|U_{A(\ov u)}(\cdot,0)\ov u\|_{C_\mu((0,T],E_\xi)}\to 0\qquad \text{as  $T\to 0$\,.}
\end{equation}

It is worth emphasizing that the above observations   still  hold when making $R$ or $T$ smaller. 
Let~${N_*:=N\big(\|\ov u\|_\beta+2R\big)>0}$ be the constant from~\eqref{AS4}.
Using the density of $E_\xi$ in $E_\beta$, we fix $\widetilde{u}\in E_\xi$ with 
{\tblue  \begin{equation*}
 \|\widetilde{u}-\ov u\|_\beta \le 2R \,.
\end{equation*}
Since $\wt u\in\bar{\mathbb{B}}_{E_\beta}\big(\ov u,2R\big)$,   we infer from~\eqref{AS4} (with~$w=\wt u$) that there exists a constant $K_*$ only depending  on $N_*$, $\|\wt u\|_\xi$, and~$\|f(\wt u)\|_\gamma$, such that for all $v,\, w\in E_\xi\cap \bar{\mathbb{B}}_{E_\beta}\big(\ov u,2R\big)$
 \begin{align}\label{estfw}
 \|f(v)\|_\gamma\leq \frac{K_*}{2}(1+ \|v\|_\xi^q)\,,
 \end{align}
 and
 \begin{align}
 \|f(w)-f(v)\|_\gamma&\leq \frac{K_*}{2} \big[\max\{\|v\|_\xi,\,\|w\|_\xi\}^{q}\|w-v\|_\beta+\max\{\|v\|_\xi,\,\|w\|_\xi\}^{q-1}\|w-v\|_\xi\big]\nonumber\\[1ex]
 & \quad+K_* \|w-v\|_\xi\,.\label{estfww}
 \end{align}
}
Since $q>1$, we find~${L\in (0,1)}$ such that
\begin{subequations}\label{choice}
\begin{align}
  (2\omega_0+\omega_1){\tblue  K_*} (\mathsf{B}_\beta +\mathsf{B}_\xi )L^{q-1}\le\frac{1}{4}\,,\label{512a}
\end{align}
 where 
\begin{equation}\label{defBtheta}
\mathsf{B}_\theta:=\mathsf{B}(1+\gamma-\theta,1-\mu q)\,,\quad \theta\in[0,1]\,,
\end{equation}
with $\mathsf{B}$ denoting the Beta function.
We may then choose $r\in (0,R)$ such that
 \begin{equation}
r(\omega_0+2\omega_1 \|\ov u\|_\alpha) \le \frac{L}{4}\,,\qquad 4\omega_1 r\le\frac{1}{8}\,.\label{512b}
\end{equation}
Moreover, using the density of $E_\xi$ in $E_\alpha$, we fix $\widehat{u}\in E_\xi$ with 
 \begin{equation}
\|\ov u-\widehat{u}\|_\alpha\le r\label{512c}\,.
\end{equation}
Consequently, we can fix $T\in (0,1)$ such that (recall~\eqref{M}, $\rho\in (0,\alpha-\beta)$, $\mu>0$)
\begin{align}
&{\tblue T^\mu\leq L}\,,\qquad M(T)\le \frac{L}{4}\,,\qquad T^\rho\leq r\,,\qquad  \omega_1\|\hat{u}\|_\xi T^\mu\le \frac{1}{16}\,, \label{512d}
\end{align}
and 
\begin{align}
 \big[ \omega_2\big(\|\ov u\|_\alpha+r\big)+\omega_0\,{\tblue  K_*}\,  \big(\omega_2\mathsf{B}_\alpha+\mathsf{B}_\beta\big) \big ] T^{\alpha-\beta-\rho}\le 1\,.\label{512e}
\end{align}
\end{subequations}
Note that $T$ and $r$ only depend on $\ov u \in O_\alpha$.\medskip

\noindent{\bf (ii) The fixed point mapping.} Let $L\in (0,1)$, $r\in (0,R)$, and $T\in (0,1)$ be chosen as in~\eqref{choice}. 
Let $\mathsf{e}_{\alpha,\beta}$ be the norm of the continuous embedding of $E_\alpha$ in $E_\beta$ and assume without loss of generality that   $\mathsf{e}_{\alpha,\beta}\ge 1$. 
Fix 
$$
u_0\in\bar{\mathbb{B}}_{E_\alpha}\big(\ov u,r/\mathsf{e}_{\alpha,\beta}\big)\,.
$$ 
We then introduce the   complete metric space
$$
\mathcal{W}:=\mathcal{V}_{T,r}\cap  \bar{\mathbb{B}}_{C_\mu((0,T],E_\xi)}(0,L)
$$
equipped with the metric
$$
d_{\mathcal{W}}(u,v):=\|u-v\|_{C([0,T],E_\beta)}+\|u-v\|_{C_\mu((0,T],E_\xi)}\,,\qquad u,\,v\in \mathcal{W}\,,
$$
and define a  mapping $F$ on $\mathcal{W}$ by
\begin{equation}\label{Lambda}
F(u)(t):= U_{A(u)}(t,0)u_0 +\int_0^t U_{A(u)}(t,\tau) f(u(\tau))\,\rd \tau\,,\qquad t\in [0,T]\,,\quad u\in\mathcal{W}\,.
\end{equation}
 We shall show that $F:\mathcal{W}\rightarrow \mathcal{W}$ is in fact a contraction.
\medskip

\noindent{\bf (iii) Self-mapping.} We check that $F$ is a self-mapping on $\mathcal{W}$.
 {\tblue Given $u,\, v\in \mathcal{W}$, it follows from \eqref{estfw}-\eqref{estfww}, since $Lt^{-\mu}\geq 1$ for $t\in(0,T]$ by \eqref{512d}, that 
\begin{subequations}\label{f}
\begin{align}
\|f(u(t))\|_\gamma&\le \frac{K_*}{2}(1+\|u(t)\|_\xi^q) \le \frac{K_*}{2}\big(1+\|u\|_{C_\mu((0,t],E_\xi)}^qt^{-\mu q} \big) \label{217a}\\[1ex]
&\le  \frac{K_*}{2}\big(1+L^qt^{-\mu q} \big) \le  K_*L^qt^{-\mu q}  \label{217b}
\end{align}
\end{subequations}
 and
\begin{equation}\label{f'}
\begin{aligned}
\|f(u(t))-f(v(t))\|_\gamma&\le  K_*\big(t^{-\mu}+L^{q-1}t^{-\mu q}\big)\,d_{\mathcal{W}}(u,v)\\[1ex]
&\le  2K_* L^{q-1}t^{-\mu q} \,d_{\mathcal{W}}(u,v) 
\end{aligned}
\end{equation}
for $t\in  (0,T]$.}
We first show that $F(u)\in \mathcal{V}_{T,r}$. To this end, let  $0\le s<t\le T$.
Then
\begin{align}
\|F(u)(t)-F(u)(s)\|_\beta&\le \|U_{A(u)}(t,0)u_0-U_{A(u)}(s,0)u_0\|_{\beta} \nonumber\\
&\quad+\int_0^s \|U_{A(u)}(t,\tau)-U_{A(u)}(s,\tau)\|_{\mathcal{L}(E_\gamma,E_\beta)} \,\|{ f(u(\tau))}\|_\gamma\,\rd \tau \nonumber\\
&\quad 	+\int_s^t \|U_{A(u)}(t,\tau)\|_{\mathcal{L}(E_\gamma,E_\beta)} \,\|{ f(u(\tau))}\|_\gamma\,\rd \tau\nonumber\\
&=:I_1+I_2+I_3\,.\label{p3}
\end{align}
In view of \eqref{p2}  we have 
\begin{align}\label{qe:6}
I_1\leq \omega_2 \|u_0\|_\alpha (t-s)^{\alpha-\beta}\leq \omega_2 \big(\|\ov u\|_\alpha+r\big) (t-s)^{\alpha-\beta}\,.
\end{align} 
Next,  we use \eqref{AS1}, \eqref{qe:a}, \eqref{p1},  and~\eqref{f} to derive
\begin{align}
I_2 &\le \int_0^s \|U_{A(u)}(t,s)-1\|_{\mathcal{L}(E_\alpha,E_\beta)} \,\|U_{A(u)}(s,\tau)\|_{\mathcal{L}(E_\gamma,E_\alpha)} \|f(u(\tau))\|_\gamma\,\rd \tau\nonumber\\
&\le \omega_0 \, \omega_2\, K_*   L^q (t-s)^{\alpha-\beta}\, \int_0^s (s-\tau)^{\gamma-\alpha} \tau ^{-\mu q}\,\rd \tau \nonumber\\
&\le \omega_0 \, \omega_2\,K_*   L^q  \, \mathsf{B}(1+\gamma-\alpha,1-\mu q)\, (t-s)^{\alpha-\beta}\,. \label{p4}
\end{align}
Similarly, we obtain from \eqref{AS1},  \eqref{qe:a},  and~\eqref{217b} that
\begin{align}
I_3 &\le     \omega_0\, K_*  L^q \int_s^t (t-\tau)^{\gamma-\beta}  \,\tau^{-\mu q}\,\rd \tau \le    \omega_0\, K_* L^q \,\mathsf{B}(1+\gamma-\beta,1-\mu q)\, (t-s)^{\alpha-\beta} \,.\label{p5}
\end{align}
Gathering~\eqref{p3}-\eqref{p5} and recalling that  $L<1$ and $(t-s)^{\alpha-\beta}\le T^{\alpha-\beta-\rho}(t-s)^\rho$, it follows from~\eqref{512e}  that
\begin{align}\label{e12}
\|F(u)(t)-F(u)(s)\|_\beta &\le   (t-s)^{\rho}\,,\quad 0\le s\le t\le T\,.
\end{align}
In particular, since $F(u)(0)=u_0\in\bar{\mathbb{B}}_{E_\alpha}\big(\ov u,r/\mathsf{e}_{\alpha,\beta}\big)$, we  infer that
\begin{equation}\label{e19}
\|F(u)(t)-\ov u\|_\beta \le  \|F(u)(t)-F(u)(0)\|_\beta +  \|u_0-\ov u\|_\beta \le T^{\rho}+r \le 2r\,,\quad 0\le t\le T\,,
\end{equation}
 using~\eqref{512d}.
In view of \eqref{e12} and \eqref{e19} we thus have  $F(u)\in \mathcal{V}_{T,r}$.

Next, we show that 
\begin{equation}\label{bbv}
\sup_{t\in(0,T]}t^\mu\|F(u)(t)\|_\xi\leq L\qquad\text{and}\qquad \lim_{t\to 0}t^\mu\|F(u)(t)\|_\xi= 0\,.
\end{equation}
 To this end, we observe first from \eqref{e23},  \eqref{512b}, and \eqref{512d} that
 \begin{align}\label{bbv1}
t^\mu \| U_{A(u)}(t,0)u_0\|_\xi &\le t^\mu \| U_{A(u)}(t,0)(u_0-\ov u)\|_\xi+t^\mu \| (U_{A(u)}(t,0)-U_{A(\ov u)}(t,0))\ov u\|_\xi\nonumber\\[1ex]
&\quad+t^\mu \| U_{A(\ov u)}(t,0)\ov u\|_\xi \nonumber\\
&\le r(\omega_0+2\omega_1\|\ov u\|_\alpha)+M(T)\leq L/2
\end{align}
 for $t\in (0,T]$. 
 Next, using  ~\eqref{AS1},~\eqref{qe:a}, and~\eqref{217a} we obtain {\tblue for $ t\in (0,T]$, 
 \begin{equation}\label{bbv2ad}
\begin{aligned}
t^\mu \|F(u)(t)- U_{A(u)}(t,0)u_0\|_\xi &\le t^\mu \int_0^t \|U_{A(u)}(t,\tau)\|_{\mathcal{L}(E_\gamma,E_\xi)} \,\|f(u(\tau))\|_\gamma\,\rd \tau\\
&\le  \frac{\omega_0 K_*}{2}t^\mu\int_0^t  (t-\tau)^{\gamma-\xi}\,\rd \tau\\
&\quad +  \frac{\omega_0 K_*}{2}\|u\|_{C_\mu((0,t],E_\xi)}^qt^\mu\int_0^t  (t-\tau)^{\gamma-\xi}\tau^{-\mu q}\,\rd \tau\\
&= \frac{\omega_0 K_*}{2(\xi-\gamma)}t^{1+\mu+\gamma-\xi}+ \frac{\omega_0 K_*\mathsf{B}_\xi}{2}\, \|u\|_{C_\mu((0,t],E_\xi)}^q\,,
\end{aligned}
\end{equation}
and, analogously, using ~\eqref{AS1}, \eqref{512a}, and \eqref{217b},
 \begin{equation}\label{bbv2}
\begin{aligned}
t^\mu \|F(u)(t)- U_{A(u)}(t,0)u_0\|_\xi &\le t^\mu \int_0^t \|U_{A(u)}(t,\tau)\|_{\mathcal{L}(E_\gamma,E_\xi)} \,\|f(u(\tau))\|_\gamma\,\rd \tau\\
&\le \omega_0\,K_*\, t^{1+\gamma-\xi-\mu q+\mu}\,\mathsf{B}_\xi\, L^q\\
& =\omega_0\,K_*\mathsf{B}_\xi L^q\leq L/2\,.
\end{aligned}
\end{equation}
Due to~\eqref{AS1} and $u\in C_\mu((0,T],E_\xi)$,  the left-hand side  of \eqref{bbv2ad} goes to zero as $t\to 0$ .}
It now follows from the relations \eqref{n}, \eqref{bbv1}, and \eqref{bbv2} that the properties stated  in \eqref{bbv} are both satisfied.

Recalling $F(u)\in \mathcal{V}_{T,r}$ and \eqref{bbv},  it remains to prove the continuity  $F(u)\in C((0,T],E_\xi)$  to conclude that $F(u)\in \mathcal{W}$.
We show  in fact  the stronger property \mbox{$F(u)\in C((0,T], E_1)$}.
To this end we fix $\e\in (0,T)$ and set $u_\e(t):=u(t+\e)$ for $t\in [0,T-\e]$. 
Then,   $u_\e\in C([0,T-\e],E_\xi)$ and  we may infer from \eqref{AS4} that  $f(u_\e)\in C([0,T-\e],E_\gamma)$.
Note that
 \[
 U_{A(u_\varepsilon)}(t,s)=U_{A(u)}(t+\varepsilon,s+\varepsilon)\,,\quad 0\leq s\leq t\leq T-\varepsilon\,,
\]
 is the evolution operator for $A(u_\e)$. 
 Using the definition \eqref{Lambda} of $F(u)$ we  get
\begin{equation}\label{P1d}
F(u)(t+\e)=U_{A(u_\e)}(t,0)F(u)(\e)+\int_0^t U_{A(u_\e)}(t,s) f(u_\e(s))\,\rd s\,,\quad t\in [0,T-\e]\,,
\end{equation}
and   \cite[II.Theorem~1.2.2, II.Remarks~2.1.2 (e)]{LQPP} combined ensure  now that  $$F(u)( \e+\cdot) \in  C\big((0,T-\varepsilon],E_1\big)\cap C^1\big((0,T-\e],E_0\big).$$ 
Since $\e\in(0,T)$ is arbitrary, we conclude that indeed 
\begin{equation}\label{regdes}
F(u)\in C\big((0,T],E_1\big)\cap C^1\big((0,T],E_0\big)\,,
\end{equation}
hence $F:\mathcal{W}\rightarrow \mathcal{W}$ is a well-defined mapping.
\medskip

\noindent{\bf (iv) Contraction Property.} Finally, we show that $F:\mathcal{W}\rightarrow \mathcal{W}$ is contractive, that is,
\begin{align}\label{contr}
d_{\mathcal{W}}\big(F(u),F(v)\big) &\le  \frac{1}{2}\,d_{\mathcal{W}}(u,v) \,,\quad u,\,v\in \mathcal{W}\,.
\end{align}
Consider $u,v\in \mathcal{W}$ and $t\in (0,T]$.
Given $\theta\in \{\beta,\alpha,\xi\}$, we first note from   \eqref{e23} and \eqref{512c} that
\begin{align}\label{c110}
\|U_{A(u)}(t,0)u_0-U_{A(v)}(t,0)u_0\|_{\theta}&\le \big\|U_{A(u)}(t,0)-U_{A(v)}(t,0)\|_{\kL(E_\alpha,E_\theta)}\|u_0-\wh u\big\|_{\alpha}\nonumber\\
&\quad+\big\|U_{A(u)}(t,0)-U_{A(v)}(t,0)\|_{\kL(E_\xi,E_\theta)}\|\wh u\|_{\xi}\nonumber\\
&\le \omega_1\big( 2r t^{\alpha-\theta}  + t^{\xi-\theta}\|\wh u \|_\xi\big)\, \|u-v\|_{C([0,T],E_\beta)}\,.
\end{align}

 Taking successively $\theta=\beta$  and $\theta=\xi$ in~\eqref{c110}, we infer from~\eqref{512b} and~\eqref{512d} that
\begin{align}\label{d111}
d_{\mathcal{W}}\big(U_{A(u)}(\cdot,0)u_0 , U_{A(v)}(\cdot,0)u_0\big)\le\omega_1\big( 4r   + 2T^{\mu}\|\wh u \|_\xi\big)d_{\mathcal{W}}(u,v)\leq \frac{1}{4} d_{\mathcal{W}}(u,v)\,.
\end{align}
Set
$$
\Phi(u):=F(u)-U_{A(u)}(\cdot,0)u_0
$$ 
and note from    \eqref{qe:a},  \eqref{e23}, \eqref{f}, and \eqref{f'} that, for $\theta\in \{\beta,\alpha,\xi\}$ and $t\in (0,T]$,
\begin{equation}\label{c111}
\begin{aligned}
\|\Phi(u)(t)-\Phi(v)(t)\|_\theta &\le \int_0^t \|U_{A(u)}(t,\tau)-U_{A(v)}(t,\tau)\|_{\mathcal{L}(E_\gamma,E_\theta)} \,\|f(u(\tau))\|_\gamma\,\rd \tau \\
&\quad 	+\int_0^t \|U_{A(v)}(t,\tau)\|_{\mathcal{L}(E_\gamma,E_\theta)} \,\|f(u(\tau))-f(v(\tau))\|_\gamma\,\rd \tau \\
&\le   \big(2\omega_0 +\omega_1\big)\,\mathsf{B}_\theta\, {\tblue K_*}  L ^{q-1}\,t^{1+\gamma-\theta-\mu q}\,d_{\mathcal{W}}(u,v)  \,.
\end{aligned}
\end{equation}
Taking $\theta=\beta$ and $\theta=\xi$ in \eqref{c111}, we obtain together with \eqref{512a} that
\begin{align*}
d_{\mathcal{W}}\big(\Phi(u),\Phi(v)\big) &\le  \big(2\omega_0 +\omega_1\big)\, \, K_* (B_\beta+B_\xi) L ^{q-1}\,d_{\mathcal{W}}(u,v)  \leq \frac{1}{4}\,d_{\mathcal{W}}(u,v) \,,
\end{align*}
and combining this with~\eqref{d111} we  arrive at the desired  property~\eqref{contr}.

Consequently, $F:\mathcal{W}\to \mathcal{W}$ is a contraction  so that Banach's fixed point theorem implies that   $F$ has 
a unique fixed point $u=u(\cdot;u_0)\in \mathcal{W}$. \\

 \noindent{\bf (v) Regularity Properties.} As shown  when deriving  \eqref{regdes}, the fixed point  $u$    of $F$  belongs to $ C((0,T],E_1)\cap C^1((0,T],E_0)$ and solves~\eqref{QCP}.
Moreover, $u(t)\to u_0$ in $E_\alpha$  as~${t\to0}$. 
Indeed, since $u_0\in E_\alpha$, we have $U_{A(u)}(t,0)u_0\to u_0$ in $E_\alpha$ as $t\to0$, while,  recalling \eqref{AS1},~\eqref{qe:a}, and~\eqref{217a}, 
\begin{align*}
\|\Phi(u)(t)\|_\alpha&\le \int_0^t \|U_{A(u)}(t,\tau)\|_{\mathcal{L}(E_\gamma,E_\alpha)} \,\|f(u(\tau))\|_\gamma\,\rd \tau \\[1ex]
& \le {\tblue \frac{\omega_0K_*}{2(\alpha-\gamma)} t^{1+\gamma-\alpha} +\frac{\omega_0K_*\mathsf{B}_\alpha}{2}\|u\|_{C_\mu((0,t],E_\xi)}^q\,,\qquad t\in(0,T]\,,}
\end{align*}
and the right-hand side tends to zero as $t\to 0$,
hence  $u\in C\big([0,T],E_\alpha\big)$.  It remains to observe from \eqref{p3}-\eqref{p5}
that $u\in C^{\alpha-\beta}\big([0,T],E_\beta\big)$ to get the regularity property~\eqref{regs'}.\medskip

\noindent{\bf (vi) Uniqueness.}   Let $u_0\in\bar{\mathbb{B}}_{E_\alpha}(\ov u,r)$   and~$u=u(\cdot;u^0)$. Let $\tilde u$ solve \eqref{QCP} with initial datum~$u_0$.
Choosing $\wt \rho\in\big(0,\min\{\rho,\, \vartheta\}\big)$, we note  that, if $T$ is sufficiently small,
 then both functions $u$ and  $\tilde u$ belong to the complete metric space $\mathcal{W}$ (with $\rho$ replaced by $\wt \rho$).
The uniqueness claim is now a straightforward consequence of the contraction property of $F$.\medskip

 \noindent{\bf (vii) Continuous Dependence.}
Given $u_0,\, u_1\in \bar{\mathbb{B}}_{E_\alpha}(\ov u,r)$ let $u(\cdot;u_0)$ and $u(\cdot;u_1)$ be the corresponding solutions derived above.
 Denoting still by~$F$ the mapping introduced in~\eqref{Lambda} (associated with~$u_0$) we have 
\begin{align}\label{d0}
 u(\cdot;u_0)=F(u(\cdot;u_0))\,,\qquad u(\cdot;u_1)=U_{A(u(\cdot;u_1))}(\cdot,0)(u_1-u_0)+ F(u(\cdot;u_1))\,.
\end{align}
Therefore, using~\eqref{qe:a} and~\eqref{contr} we  deduce that
\begin{align*}
d_{\mathcal{W}}\big(u(\cdot;u_0),u(\cdot;u_1)\big) &\le d_{\mathcal{W}}\big(F(u(\cdot;u_0)),F(u(\cdot;u_1))\big)\\ &\quad +d_{\mathcal{W}}\big(U_{A(u(\cdot;u_1))}(\cdot,0)u_0,U_{A(u(\cdot;u_1))}(\cdot,0)u_1\big)\\
&\le \frac{1}{2}\, d_{\mathcal{W}}\big(u(\cdot;u_0),u(\cdot;u_1)\big)+\omega_0\, (\mathsf{e}_{\alpha,\beta}+1)\,\|u_0-u_1\|_\alpha\,,
\end{align*}
where we still use $\mathsf{e}_{\alpha,\beta}$ for the norm of the embedding $E_\alpha\hookrightarrow E_\beta$. Consequently,
\begin{align}\label{d1}
d_{\mathcal{W}}\big(u(\cdot;u_0),u(\cdot;u_1)\big) &\le 2\,\omega_0\, (\mathsf{e}_{\alpha,\beta}+1)\,\|u_0-u_1\|_\alpha\,.
\end{align}
Now, recall from~\eqref{e23} and~\eqref{c111} that, for $t\in [0,T]$,
\begin{align*}
\big\|F(u(\cdot;&u_0))(t)-F(u(\cdot;u_1))(t)\big\|_\alpha\\
 &\le   \big[\omega_1 \big(r+\|\ov u\|_\alpha\big)+\big(2\omega_0 +\omega_1\big)\mathsf{B}_\alpha K_*  L ^{q-1}\big]\, d_{\mathcal{W}}\big(u(\cdot;u_0),u(\cdot;u_1)\big)  \,, 
\end{align*}
hence \eqref{d1} entails that
\begin{align*}
\big\|F(u(\cdot;u_0))(t)-F(u(\cdot;u_1))(t)\big\|_\alpha &\le  c_2  \,\|u_0-u_1\|_\alpha\,,\quad t\in [0,T]\,,
\end{align*}
for some constant $c_2>0$. 
Invoking~\eqref{qe:a} and~\eqref{d0},  we  finally obtain  for $t\in [0,T]$ that
\begin{align*}
\big\|u(\cdot;u_0)(t)-u(\cdot;u_1)(t)\big\|_\alpha &\le  \big\|U_{A(u(\cdot;u_1))}(\cdot,0)(u_1-u_0)\big\|_\alpha\\
&\quad +\big\|F(u(\cdot;u_0))(t)-F(u(\cdot;u_1))(t)\big\|_\alpha\\
&\le c_3 \,\|u_0-u_1\|_\alpha
\end{align*}
for some constant $c_3>0$. This guarantees~\eqref{eq:14} and completes the proof of Proposition~\ref{P:0}.
\end{proof}

 \begin{rem}\label{Rem1}
The proof of Proposition~\ref{P:0} shows that
the H\"older continuity in time and the assumption~$\alpha>\beta$ are only needed to ensure~\eqref{QE2} while the assumptions~${\gamma>0}$  and~${\xi<1}$ are
  only used when applying formula~\eqref{e23} to derive (Lipschitz) continuity properties of the evolution operator $U_{A(u)}(t,s)$ with respect to $u$. 
   In other words, these assumptions are required to handle the quasilinear part and can thus be weakened for
   the semilinear  problem~\eqref{SCP}.
\end{rem}

\subsection*{Proof of Theorem~\ref{T:0x}}
We are now in a position to establish Theorem~\ref{T:0x}. \\

\noindent{\bf (i),\, (ii) Existence and Uniqueness.}
Proposition~\ref{P:0} ensures that  the Cauchy problem~\eqref{QCP} has for each  $u_0\in O_\alpha$  a unique local strong solution. 
This solution  can be extended by standard arguments to 
a maximal strong solution $u(\cdot;u_0)$ on  a maximal interval of existence.
 The regularity properties \eqref{reg:T0x}  and the uniqueness claim   follow from Proposition~\ref{P:0}.\medskip

\noindent{\bf (iii) Continuous dependence:} It suffices to prove that, given $u_0\in O_\alpha$ and  $t_*\in (0,t^+(u_0))$, there exist constants $\e>0$ and $K>0$ such that 
for all $\wt u_0\in  \mathbb{B}_{E_\alpha}(u_0, \e_0)\subset O_\alpha$, the maximal strong solution $u(\cdot; \wt u_0)$ is defined on $[0,t_*]$, that is, $t^+(\wt u_0)>t_*$, and satisfies
\begin{equation}\label{bubx}
\|u(t; u_0)- u(t; \wt u_0)\|_{ \alpha} \leq K\|u_0-\wt u_0\|_\alpha\,,\quad 0\leq t\leq t_*\,.
\end{equation}
Let thus $u_0\in O_\alpha$ and  fix  $t_*\in (0,t^+(u_0))$.
For each~${s \in[0,t_*]}$,  we infer  from Proposition~\ref{P:0} with  $\ov u:=u(s;u_0)$  that there exist constants $r_s,\,T_s\in(0,1)$ 
such that problem~\eqref{QCP} has  for each $y\in \bar{\mathbb{B}}_{E_\alpha} (u(s;u_0),r_s)$ a unique  strong solution~${u=u(\cdot;y)}$ on~${[0,T_s]}$.
Moreover, there exists a  constant $c_s>0$ such that 
 \begin{equation*}
 \|u(t;y_0)-u(t; y_1)\|_\alpha\leq c_s\|y_0-y_1\|_\alpha\,,\qquad y_0,\,y_1\in \bar{\mathbb{B}}_{E_\alpha}(u(s;u_0),r_s)\,,\quad 0\leq t\leq T_s\,,
\end{equation*}
see \eqref{eq:14}.
Since the set $u([0,t_*];u_0)\subset O_\alpha$ is compact, it possesses a finite cover
\[
 u([0,t_*];u_0)\subset \bigcup_{i=1}^n V_i\qquad\text{with}\qquad V_i:=\mathbb{B}_{E_\alpha}(u(s_i;u_0), r_{s_i})\subset O_\alpha\,.
\]
We define
\[
 T:=\min\{T_{s_i}\,:\, 1\leq i\leq n\},\qquad C:= \max\{c_{s_i}\,:\, 1\leq i\leq n\}\,.
\]
Since  $u_0\in V_{i_0}$ for some $1\leq i_0\leq n,$  there exits $\varepsilon>0$  with $\mathbb{B}_{E_\alpha}(u_0, \e)\subset V_{i_0}$.
Therefore, for $\wt u_0\in \mathbb{B}_{E_\alpha}(u_0, \e)$, the solution  $u(\,\cdot\,;\wt u_0)$ provided by Proposition~\ref{P:0} satisfies, according to~\eqref{eq:14},
\begin{equation}\label{bub1x}
\|u(t; u_0)- u(t; \wt u_0)\|_{ \alpha} \leq C\|u_0-\wt u_0\|_\alpha\,,\quad 0\leq t\leq T\,.
\end{equation}
 If $t_*\leq T$, we obtain \eqref{bubx}  and the claim follows.
If  however $T<t_*$, there is~${1\leq i_1\leq n}$ such that  $ u(T/2;u_0)\in V_{i_1}$. 
Moreover, making  $\e$ smaller (if necessary), we may assume in view of~\eqref{bub1x}  that   $u(T/2;\wt u_0)\in V_{i_1}$  for all~${\wt u_0\in \mathbb{B}_{E_\alpha}(u_0, \e)}$.
Hence, the solution $u(\cdot; u(T/2;\wt u_0))$ is also defined on~${[0,T]}$
and satisfies, due to \eqref{eq:14} and \eqref{bub1x},
\begin{align}\label{bub2x}
\|u(t; u(T/2; u_0))- u(t; u(T/2;\wt u_0))\|_{ \alpha}& \leq C\|u(T/2; u_0)-u(T/2;\wt u_0)\|_\alpha\nonumber\\
&\leq  C^2\|u_0-\wt u_0\|_\alpha
\end{align}
for $0\leq t\leq T$.
Since the uniqueness result in  Proposition~\ref{P:0} ensures that 
$$
u(t+T/2; \wt u_0)=u(t;u(T/2;\wt u_0))\,,\qquad t\in[0,T/2]\,,\quad \wt u_0\in \mathbb{B}_{E_\alpha}(u_0, \e)\,,
$$
the solution  $u(\cdot; \wt u_0)$ is defined (at least on) on $[0,3T/2]$ 
and satisfies   
\[
u(t;\wt u_0)
=\left\{
\begin{array}{lll}
u(t;\wt u_0)\,,& t\in[0,T]\,,\\[1ex]
u(t-T/2; u(T/2;\wt u_0))\,, &t\in[T/2,3T/2]\,,
\end{array}
\right.
\]
along with 
\begin{equation*}
\|u(t; u_0)- u(t; \wt u_0)\|_{ \alpha} \leq C^2\|u_0-\wt u_0\|_\alpha\,,\quad 0\leq t\leq 3T/2\,,
\end{equation*}
 according to \eqref{bub1x}-\eqref{bub2x}.

If $t_*\leq  3T/2$, then the proof is complete. 
Otherwise, we proceed as above to derive~\eqref{bubx} after a finite number of steps. \medskip

\noindent{\bf (iv)  Global existence:} The semiflow property of the solution map  ensures for  relatively compact  orbits $u([0,t^+(u_0));u_0)$ in $O_\alpha$ that  $t^+(u_0)=\infty$.\medskip

\noindent{\bf (v)  Blow-up criteria:} Let $u_0\in O_\alpha$ be such that $t^+:=t^+(u_0)<\infty$.\vspace{1mm}

\indent {\bf (a)} If $u(\cdot;u_0):[0,t^+)\to E_\alpha$ is uniformly continuous and \eqref{boundx} does not hold, we deduce that  the limit $\lim_{t\nearrow t^+}u(t;u_0)$ exists in~$O_\alpha$.
Consequently,~$u([0,t^+);u_0)$ is relatively compact in~$O_\alpha$, which contradicts~{\bf (iv)}. 
\vspace{2mm}

\indent {\bf  (b)} If $u(\cdot;u_0):[0,t^+)\to E_\beta$ is uniformly H\"older continuous and~\eqref{v1} does not hold,  we have
{\tblue \begin{equation}\label{ff}
\sup_{t\in(t^+/2,t^+)}\|f(u(t;u_0))\|_0<\infty\,,
\end{equation}
and} we may extend  $u(\cdot;u_0)$ to a function in $C^\rho([0,t^+], O_\beta)$ for some $\rho\in(0,1)$,  hence~\eqref{AS2} implies that $A(u(\cdot;u_0))\in C^\rho\big([0,t^+],\mathcal{H}(E_1,E_0)\big)$.
Thus, we infer from~\cite[II.Corollary~4.4.2]{LQPP} that the  evolution operator
$$
U_{A(u(\cdot;u_0))}(t,s)\,,\quad 0\le s\le t\le t^+\,,
$$
for $A(u(\cdot;u_0))$ is well-defined and satisfies~\cite[II.Assumption~(5.0.1)]{LQPP} (see~\cite[I.Corollary~1.3.2]{LQPP}), so that the corresponding stability estimates~\eqref{qe:a} are valid on~$[0,t^+]$. 
{\tblue Based on the fact that~${u\in C_\mu((0,t^+/2], E_\xi)}$, the assumption  \eqref{AS4} ensures that there exists a constant $c>0$ with
\begin{align}\label{fff}
\|f(u(t;u_0))\|_\gamma\le c\, t^{-\mu q}\,,\quad 0<t\le t^+/2\,,
\end{align} 
and} it now follows from~\eqref{qe:a},~\eqref{ff},~\eqref{fff}, and the variation-of-constants formula~\eqref{Lambda} that the limit
\begin{equation*}
\lim_{t\nearrow t^+} u(t;u_0)= U_{A(u(\cdot;u_0))}(t^+,0)u_0 +\int_0^{t^+} U_{A(u(\cdot;u_0))}(t^+,\tau) f(u(\tau))\,\rd \tau
\end{equation*}
exists in~$E_\alpha$.  Consequently, the orbit $u([0,t^+);u_0)$ is relatively compact in $O_\alpha$, hence $t^+=\infty$ by~{\bf (iv)}. This is a contradiction.\vspace{2mm}

\indent {\bf (c)} Let $E_1$ be compactly embedded in~$E_0$. Assume~\eqref{v2} is not true for some~\mbox{$\theta\in (\alpha,1)$}. 
Hence, for any sequence $t_n\nearrow t^+$ there  exist a subsequence $(t_{n_j})$ of $(t_n)$ and $\ov u\in O_\alpha$ such that~${u(t_{n_j};u_0)\to \ov u}$ in~$E_\alpha$.  
Therefore, the orbit  $u([0,t^+);u_0)$ is relatively compact in $O_\alpha$, which is a contradiction.

\qed

\section{The Semilinear Case: Proof of Theorem~\ref{T:2}  and Corollary~\ref{C1}}\label{Sec3}

We briefly  indicate how to prove Theorem~\ref{T:2}, which  relies on the following result.
\begin{prop}\label{P:1}
 Assume \eqref{Vor}. 
Then, given $\ov u\in O_\alpha$, there exist constants~$r,\,T\in(0,1)$  such that for all $u_0\in\bar B_{E_\alpha}(\ov u,r)$ there  exists a unique strong  solution $u=u(\cdot;u_0)$ to \eqref{SCP} such that
\begin{equation}\label{regso}
u\in C\big((0,T],E_1\big)\cap C^1\big((0,T],E_0\big)\cap C\big([0,T],O_\alpha\big)\cap C_{\xi-\alpha}\big((0,T],E_\xi\big)\,.
\end{equation}

 Moreover, there exists a constant $c>0$ such that 
\begin{equation}\label{eq:maybe}
 \|u(t;u_0)-u(t; u_1)\|_\alpha\leq c\|u_0-u_1\|_\alpha\,,\qquad u_0,\,u_1\in \bar B_{E_\alpha}(\ov u,r)\,,\quad 0\leq t\leq T\,.
\end{equation}
\end{prop}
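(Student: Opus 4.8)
The plan is to run the Banach fixed point scheme of Proposition~\ref{P:0} once more, but to exploit the simplifications recorded in Remark~\ref{Rem1}. Since $A\in\kH(E_1,E_0)$ no longer depends on the unknown, every evolution operator $U_{A(u)}(t,s)$ is replaced by the single analytic semigroup $(e^{tA})_{t\ge0}$, which by \eqref{Vor3} satisfies the smoothing bounds $\|e^{tA}\|_{\kL(E_\vartheta,E_\theta)}\le\omega_0\,t^{-(\theta-\vartheta)}$ for $t\in(0,1]$ and $0\le\vartheta\le\theta\le1$ in the index set $\{0,\gamma,\alpha,\xi,1\}$; these play the role of \eqref{qe:a}. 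Because $A$ is autonomous it lies trivially in $C^\rho\big([0,T],\kL(E_1,E_0)\big)$, so neither a H\"older-in-time constraint on the iterates nor the auxiliary space $E_\beta$ is needed, and in particular the delicate estimate \eqref{e23} for the dependence of the evolution operator on $u$ --- the only place where $\gamma>0$ and $\xi<1$ were used --- disappears entirely. I fix $R\in(0,1)$ with $\bar B_{E_\alpha}(\ov u,2R)\subset O_\alpha$, set $N_*:=N(\|\ov u\|_\alpha+2R)$ from \eqref{Vor5} and $\mu:=\xi-\alpha$, and work on the complete metric space
\[
\W:=\big\{u\in C\big([0,T],\bar B_{E_\alpha}(\ov u,2R)\big)\,:\,\|u\|_{C_\mu((0,T],E_\xi)}\le L\big\}
\]
with metric $d_{\W}(u,v):=\|u-v\|_{C([0,T],E_\alpha)}+\|u-v\|_{C_\mu((0,T],E_\xi)}$, on which I define
\[
F(u)(t):=e^{tA}u_0+\int_0^t e^{(t-\tau)A}f(u(\tau))\,\rd\tau\,,\qquad t\in[0,T]\,.
\]
Here $u(t)\in O_\alpha\cap E_\xi=O_\xi$ for $t>0$ whenever $u\in\W$, so $f(u(t))$ is defined on the range of integration.

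As in \eqref{f}-\eqref{f'}, assumptions \eqref{Vor4}-\eqref{Vor5} give $\|f(u(t))\|_\gamma\le N_*L^qt^{-\mu q}$ and $\|f(u(t))-f(v(t))\|_\gamma\le2N_*L^{q-1}t^{-\mu q}d_{\W}(u,v)$ for $t\in(0,T]$. The defining critical identity $\mu q=1+\gamma-\alpha$ of \eqref{Vor2} makes the resulting integrals scale-invariant: for $\theta\in\{\alpha,\xi\}$ one has $\int_0^t(t-\tau)^{-(\theta-\gamma)}\tau^{-\mu q}\,\rd\tau=\mathsf{B}_\theta\,t^{\alpha-\theta}$ with $\mathsf{B}_\theta$ from \eqref{defBtheta}. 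Hence the Duhamel part of $F(u)$ is bounded by $\omega_0N_*\mathsf{B}_\alpha L^q$ in $E_\alpha$ and its $C_\mu((0,T],E_\xi)$-norm by $\omega_0N_*\mathsf{B}_\xi L^q$, both of which I make small by choosing $L$ so that $\omega_0N_*(\mathsf{B}_\alpha+\mathsf{B}_\xi)L^{q-1}\le1/2$; this is precisely where the superlinearity $q>1$ enters. For the semigroup part $e^{\cdot A}u_0$ I reuse the density argument behind \eqref{n}-\eqref{M} (using $\mu>0$ and the density of $E_\xi$ in $E_\alpha$), which gives $e^{\cdot A}u_0\in C_\mu((0,T],E_\xi)$ with $C_\mu$-norm as small as desired once the data radius $r$ and $T$ are small, while strong continuity of $(e^{tA})$ on $E_\alpha$ (as already used for $U_{A(u)}(\cdot,0)u_0$ in step~(v) of the proof of Proposition~\ref{P:0}) gives $\sup_{[0,T]}\|e^{tA}\ov u-\ov u\|_\alpha\to0$ as $T\to0$; together with $\|e^{tA}(u_0-\ov u)\|_\alpha\le\omega_0 r$ and the $E_\alpha$-bound $\omega_0N_*\mathsf{B}_\alpha L^q$ on the Duhamel term this keeps $F(u)(t)$ in $\bar B_{E_\alpha}(\ov u,2R)$. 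Choosing, in this order, $L$ small, then $r$ small (with $\omega_0 r\le R$), then the $E_\xi$-approximant of $\ov u$, and finally $T$ small forces $F(\W)\subseteq\W$.

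The contraction step is where the semilinear problem is genuinely easier: the initial-data term $e^{tA}u_0$ is independent of the iterate, so $F(u)(t)-F(v)(t)=\int_0^t e^{(t-\tau)A}\big(f(u(\tau))-f(v(\tau))\big)\,\rd\tau$, and the scale-invariant integrals with \eqref{f'} directly give $d_{\W}(F(u),F(v))\le\omega_0N_*(\mathsf{B}_\alpha+\mathsf{B}_\xi)L^{q-1}d_{\W}(u,v)\le\frac12d_{\W}(u,v)$; no analogue of \eqref{c110}-\eqref{d111} is required, precisely because $U_{A(u)}$ does not depend on $u$. Banach's theorem then yields a unique fixed point $u=u(\cdot;u_0)\in\W$. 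Its regularity $u\in C((0,T],E_1)\cap C^1((0,T],E_0)$ follows by the same $\e$-shift argument that produced \eqref{regdes}: for $\e\in(0,T)$ the function $F(u)(\e+\cdot)$ solves the autonomous inhomogeneous problem with inhomogeneity $f(u(\e+\cdot))\in C([0,T-\e],E_\gamma)$ and \cite[II.Theorem~1.2.2]{Am95} applies (when $\gamma=0$ one first upgrades $u$ to local H\"older continuity in $E_\xi$ on compact subintervals of $(0,T]$ via the mild formula, so that $f(u(\e+\cdot))$ is H\"older in $E_0$). Finally $u\in C([0,T],O_\alpha)$ follows since $e^{tA}u_0\to u_0$ in $E_\alpha$ and the Duhamel term tends to $0$ in $E_\alpha$ by its scale-invariant bound with factor $L^q$. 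This gives \eqref{regso}; uniqueness within this class holds because, after shrinking $T$, any such solution with datum $u_0$ lies in $\W$ and hence equals the fixed point.

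For the Lipschitz estimate \eqref{eq:maybe} I argue as in step~(vii) of the proof of Proposition~\ref{P:0}, but more directly: writing $w:=u(\cdot;u_0)-u(\cdot;u_1)$, the two fixed point identities give $w=e^{\cdot A}(u_0-u_1)+\int_0^\cdot e^{(\cdot-\tau)A}\big(f(u(\tau;u_0))-f(u(\tau;u_1))\big)\,\rd\tau$, and the contraction bound absorbs the integral to yield $d_{\W}(u(\cdot;u_0),u(\cdot;u_1))\le2\big(\|e^{\cdot A}(u_0-u_1)\|_{C([0,T],E_\alpha)}+\|e^{\cdot A}(u_0-u_1)\|_{C_\mu((0,T],E_\xi)}\big)\le4\omega_0\|u_0-u_1\|_\alpha$, the $C_\mu$-part being again $\le\omega_0\|u_0-u_1\|_\alpha$ because of the exact balance $t^\mu\,t^{-\mu}=1$. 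Restricting to the $E_\alpha$-component gives \eqref{eq:maybe}. I expect the only points that require genuine care to be the $E_\alpha$ self-mapping --- where, in contrast to Proposition~\ref{P:0}, there is no regularity gain to exploit inside $E_\alpha$, so one keeps a fixed-radius ball of radius $2R$ together with a small data radius $r$ --- and, when $\gamma=0$, the short bootstrap to local H\"older continuity needed before invoking the classical regularity theory; the core mechanism, the critical scale-invariance coupled with the superlinear factor $L^{q-1}$ driving the contraction, is inherited unchanged from the quasilinear argument.
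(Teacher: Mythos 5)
Your proposal is correct and takes essentially the same route as the paper, whose proof consists precisely of defining the fixed-point map \eqref{FFF} on $\mathcal{W}=\bar{\mathbb{B}}_{C([0,T],E_\alpha)}(\ov u,L)\cap\bar{\mathbb{B}}_{C_\mu((0,T],E_\xi)}(0,L)$ and running the contraction argument of Proposition~\ref{P:0} with the simplifications of Remark~\ref{Rem1} --- exactly what you flesh out in detail (your pointwise radius-$2R$ ball in $E_\alpha$ versus the paper's radius-$L$ ball is immaterial, and your bootstrap to local H\"older continuity for the strict-solution regularity when $\gamma=0$ addresses a point the paper leaves implicit). The only slip is cosmetic: the Lipschitz bound \eqref{f'} carries a factor $2$, so your contraction constant is really $2\omega_0 N_*(\mathsf{B}_\alpha+\mathsf{B}_\xi)L^{q-1}$, which merely requires choosing $L$ correspondingly smaller.
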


\begin{proof}
Letting  $(e^{tA})_{t\geq0}$ be the strongly continuous analytic semigroup on $E_0$ generated by~$A$, we formulate  \eqref{SCP} as a fixed point problem $u=F(u)$, where
\begin{equation}\label{FFF}
F(u)(t):= e^{tA} u_0+ \int_0^t e^{(t-\tau)A} f(u(\tau))\,\rd \tau\,,\qquad t\in [0,T]\,.
\end{equation} 
For suitable $r,\, L,\, T\in(0,1)$ and $u_0\in \bar{\mathbb{B}}_{E_\alpha}(\ov u,r)$,  
one proves analogously to the proof of Proposition~\ref{P:0} that  $F:\mathcal{W}\to\mathcal{W}$ is a contraction for
\[
 \mathcal{W}=\bar{\mathbb{B}}_{C([0,T],E_\alpha)}(\ov u,L)\cap \bar{\mathbb{B}}_{C_\mu((0,T],E_\xi)}(0,L)\,,
\]
where we refer to Remark~\ref{Rem1} for the weaker assumptions required in the semilinear setting.
\end{proof}

Based on Proposition~\ref{P:1}, we now establish  Theorem~\ref{T:2}.

\subsection*{Proof of Theorem~\ref{T:2}}
The proof of Theorem~\ref{T:2} follows from Proposition~\ref{P:1}  by arguing as in the proof  of Theorem~\ref{T:0x}.
\qed
\medskip

 We conclude this section with the proof of Corollary~\ref{C1}.

\subsection*{Proof of Corollary~\ref{C1}}

 {\tblue  Letting $c_0>0$, $\delta\in(0,1)$, and $q_*>1$ be the constants in~\eqref{stas}, we choose~${R\in(0,\delta)}$}   such that $\bar{\mathbb{B}}_{E_\alpha}(0,R)\subset O_\alpha$ and let 
$$
s(A)<-\varsigma<-\varpi<0\,.
$$ 
Assumption \eqref{Vor3} together with   \cite[II.Lemma~5.1.3]{LQPP}  
ensures that there is a constant~${M\geq 1}$ such that 
\begin{equation}\label{v1as}
\|e^{tA}\|_{\mathcal{L}(E_\theta)}+ t^{\theta-\vartheta}\|e^{tA}\|_{\mathcal{L}(E_\vartheta,E_\theta)}  \le  \frac{M}{4}e^{-\varsigma t}  \,, \quad t>0\,,
\end{equation}
for $\theta,\vartheta\in \{0,\gamma,\alpha,\xi,1\}$ with $\vartheta\le \theta$.
{\tblue We note that for $0\le a<1-b$ and $\omega>0$ we have
\begin{subequations}\label{est4xxx}
\begin{align}
\sup_{t>0}\left(t^{a}\int_0^1(1-s)^{-b}\, e^{-\omega t(1-s)} \, s^{-\mu q_*}\,\rd s\right)
&\le \Big(\sup_{r>0}r^{a}e^{-\omega  r}\Big)\,\mathsf{B}(1-a-b,1-\mu q_*)\,.
\end{align}  
Since $1+\gamma-\alpha-\mu q_*\geq 0$, $q_*>1$, and $\varpi-\varsigma<0$, we may set
\begin{align}
c_1:=\max_{\theta\in\{\alpha,\xi\}}\Big(\sup_{r>0}r^{1+\gamma-\alpha-\mu q_*}e^{(\varpi-\varsigma)  r}\Big)\,\mathsf{B}(\alpha+\mu q_*-\theta,1-\mu q_*)\,.
\end{align} 
\end{subequations}
 We  fix $L\in (0,R)$ such that 
\begin{equation}\label{L}
c_0 c_1M  L^{q_*-1}\le 1\,.
\end{equation} 
Consider} $u_0\in U_\alpha:=\mathbb{B}_{E_\alpha}\big(0,L/M\big)$ and let $u=u(\cdot;u_0)$ be the maximal strong solution to~\eqref{SCP} on~$[0,t^+(u_0))$. Set $\mu:=\xi-\alpha$ and
$$
t_*:=\sup\big\{t\in (0,t^+(u_0))\,;\, \text{$\|u\|_{C_\mu ((0,t],E_\xi)}<L$\, and  \, $\|u\|_{C ([0,t],E_\alpha)}<R $}\big\} \,.
$$
Note that, since $\|u\|_{C_\mu((0,t],E_\xi)}\to 0$ \,  and \, $\|u-u_0\|_{C ([0,t],E_\alpha)}\to0$ as $t\to 0$, it holds that~$t_*>0$. 
Then, for $\theta\in \{\alpha,\xi\}$ and $t\in (0,t_*)$, we obtain from {\tblue    \eqref{stas2},~\eqref{v1as},~and~\eqref{est4xxx}} that
{\tblue \begin{equation}\label{bbb}
\begin{aligned}
t^{\theta-\alpha}\|u(t)\|_\theta&\le \frac{M}{4}\,e^{-\varsigma t}\,\|u_0\|_\alpha +  \frac{c_0M  L^{q_*}  }{4} t^{\theta-\alpha}\int_0^t (t-\tau)^{\gamma-\theta}\,e^{-\varsigma (t-\tau)} \,\tau^{-\mu q_*}\,\rd \tau\\
&\le \frac{M}{4}\,e^{-\varsigma t}\|u_0\|_\alpha +  \frac{c_0M   L^{q_*}}{4} t^{1+\gamma-\alpha-\mu q_*}\int_0^1 (1-s)^{\gamma-\theta}\,e^{-\varsigma t(1-s)} \,s^{-\mu q_*}\,\rd \tau\\
&\le \frac{M}{4} \|u_0\|_\alpha +  \frac{c_0c_1 M   L^{q_*}}{4}\,.
\end{aligned}
\end{equation}}
It now follows from~\eqref{L} and \eqref{bbb} that
$$
\|u\|_{C_\mu((0,t],E_\xi)}\le \frac{L}{2} \qquad    \text{and} \qquad  \|u\|_{C ([0,t],E_\alpha)}\leq\frac{R}{2} 
$$
for each $t\in (0,t_*)$.
 Therefore $t_*=t^+(u_0)$, and we deduce {\tblue  from~\eqref{stas2}} and the definition of~$R$ that 
$$
\underset{t\nearrow t^+(u_0)}{\limsup}\|f(u(t;u_0))\|_{0}<\infty\qquad\text{and} \qquad  \underset{t\nearrow t^+(u_0)}\limsup\,\mathrm{dist}_{E_\alpha}\big( u( t;u_0),\partial  O_\alpha\big)>0\,.
$$
 Theorem~\ref{T:2}~{\bf (iv)(b)} now implies $t^+(u_0)=\infty$. Put
$$
z(t):=\sup_{\tau\in(0,t]} \big(  \|u(\tau)\|_\alpha+\tau^\mu  \|u(\tau)\|_\xi\big)e^{\varpi \tau}\,,\quad t>0\,.
$$
 Given $0<\tau<t$, we then have 
$$
\|u(\tau)\|_\xi^{q_*}\le L^{{q_*}-1} z(t)\, \tau^{-\mu q_*}\,e^{-\varpi \tau}\,,
$$
and together with   {\tblue \eqref{stas2},   \eqref{v1as}, and} the latter estimate   we obtain, analogously to~\eqref{bbb}, that 
\begin{align*}
   z(t)\le \frac{ M}{2} \,\|u_0\|_\alpha + \frac{c_0c_1 M  \,L^{q_*-1}}{2}\, z(t)
\end{align*}
and therefore, by the choice of $L$ from~\eqref{L},
$$
z(t)\le M\|u_0\|_\alpha \,,\quad t>0\,,
$$
that is,
$$
\|u(t)\|_\alpha+t^{\xi-\alpha}\|u(t)\|_\xi\le M\,e^{-\varpi t}\,\|u_0\|_\alpha \,,\quad t>0\,.
$$
This proves Corollary~\ref{C1}.

 \section{Applications of the Abstract Results} \label{Sec:4}

 In this section we first apply the abstract results for the semilinear case -- see Theorem~\ref{T:2} and Corollary~\ref{C1}  -- 
  in the context  of {\tred a coagulation-fragmentation equation with size diffusion (see Example~\ref{Exam4}),}
of an asymptotic model for atmospheric  flows (see Example~\ref{Exam1}),  and of a classical semilinear heat equation (see  Example~\ref{Exam2}).
  In the final part we consider in Example~\ref{Exam3} a quasilinear parabolic evolution problem and establish its local well-posedness in a critical Bessel potential space by virtue of Theorem~\ref{T:0x}.
 
{\tred
\subsection{Example}\label{Exam4} 

The coagulation-fragmentation equation with size diffusion 
\begin{subequations}\label{FD.0}
	\begin{align}
		\partial_t \phi(t,x) & = D \partial_x^2 \phi(t,x) +  \mathcal{F}(\phi(t,\cdot))(x) + \mathcal{K}\big(\phi(t,\cdot),\phi(t,\cdot)\big)(x) \,, 
\label{FD.1} \\
		\phi(t,0) & = 0\,, \qquad 
		\phi(0,x)  = \phi_0(x)\,, 
\label{FD.3} 
	\end{align}
\end{subequations}
for  $t>0$ and $x\in (0,\infty)$, where
\begin{equation*}
	\mathcal{F}(\phi)(x):=- a(x) \phi(x) + \int_x^\infty a(y) b(x,y) \phi(y)\ \mathrm{d}y
\end{equation*}
and 
\begin{equation*}
\mathcal{K}(\phi,\psi)(x):=\frac{1}{2}\int_0^x k(y,x-y) \phi(y)\psi(x-y)\,\mathrm{d}y-\phi(x)\int_0^\infty k(x,y)\psi(y)\,\mathrm{d}y\,,
\end{equation*}
describes the dynamics of the size distribution function $\phi=\phi(t,x)\ge 0$ of particles of size~${x\in (0,\infty)}$ at time $t>0$, where particles modify their sizes according to random fluctuations with rate $D>0$, spontaneous fragmentation with overall fragmentation rate $a\ge 0$ and daughter distribution function $b\ge 0$, and binary coalescence with coagulation kernel~${k\ge 0}$. There is a vast literature
on coagulation-fragmentation equation without size diffusion (i.e. $D=0$), for a summary we refer to \cite{BLL2020a, BLL2020b}. The  inclusion of  size diffusion is done more recently without  \cite{MFJLODS2004,LW_EJAM} and with \cite{OFJM2005,LW_DIE} coagulation (see also the references in the cited papers). Herein we apply  Theorem~\ref{T:2}   and thus extend the well-posedness part of \cite[Theorem~1.1]{LW_DIE} for~\eqref{FD.0}.\\

To this end, we suppose that  $a$ satisfies
\begin{subequations}\label{sda}
\begin{equation}
a\in L_{\infty,loc}([0,\infty))\,, \qquad a\ge 0 \;\text{ a.e. in }\; (0,\infty)\,, \label{A.0}
\end{equation}
while $b$ is a non-negative measurable function on $(0,\infty)^2$ satisfying
\begin{equation}
	\int_0^y x b(x,y)\ \mathrm{d}x = y\,, \qquad y\in (0,\infty)\,, \label{B.0}
\end{equation}
and there is $\delta_2\in (0,1)$ such that
\begin{equation}
	(1-\delta_2) y^2 \ge \int_0^y x^2 b(x,y)\ \mathrm{d}x\,, \qquad y\in (0,\infty)\,. \label{B.10}
\end{equation}
Assumption \eqref{B.0} guarantees that fragmentation conserves the total mass. For the coagulation kernel $k$, we assume that there are $0\le \xi_0 < \xi < 1$, $m>1$, and $k_*>0$ such that
\begin{subequations}\label{K1L}
\begin{equation}\label{K1}
	0\le k(x,y)=k(y,x)\le k_*\frac{\ell(x)\ell(y)}{x+y+(x+y)^m}\,,\qquad (x,y)\in (0,\infty)^2\,,
\end{equation}
where
\begin{equation}
	\ell(x):=\left\{
	\begin{array}{ll} 
		x^{1-2\xi_0}\,, & x\in (0,1)\,,\\
		(1+a(x))^\xi x^m\,, & x>1\,. 
	\end{array}\right. 
\end{equation}
\end{subequations}
\end{subequations}
We refer to~\cite{BLL2020a,LW_DIE} for relevant examples of kernels obeying the assumptions above. We then set
\begin{equation*}
	E_0:=L_1\big( (0,\infty),(x+x^m)\mathrm{d}x \big)
\end{equation*}
and introduce the linear operator
\begin{equation*}
	A \phi:=D\partial_x^2 \phi +\mathcal{F}(\phi)\,,\qquad \phi\in \mathrm{dom}(A)\,,
\end{equation*}
where
\begin{equation*}
	\mathrm{dom}(A) := \{ \phi\in E_0\ :\ \partial_x^2 \phi\in E_0\,, \ a\phi \in E_0\,, \ \phi(0)=0\}\,.
\end{equation*}	
Defining the graph norm of $\phi\in \mathrm{dom}(A)$ by 
\begin{equation*}
	\|\phi\|_{A} := \|\phi\|_{E_0} + \|\partial_x^2 \phi\|_{E_0} + \|a\phi\|_{E_0}\,,
\end{equation*} 
we set
\begin{equation*}
	E_1 := (\mathrm{dom}(A),\|\cdot\|_{A})\,.
\end{equation*} 
It then follows from~\cite[Theorem~1.1]{LW_EJAM} that assumptions~\eqref{sda} imply
\begin{equation}\label{Vor1x}
A\in \mathcal{H}(E_1,E_0)\,,
\end{equation} 
in particular, $E_1$ is a Banach space  that is densely embedded in $E_0$. Finally, we introduce the complex interpolation spaces
\begin{equation*}
	E_\theta := \big[E_0,E_1\big]_{\theta}\,,\qquad \theta\in (0,1)\,, 
\end{equation*}
for which, however, there does not seem to be a precise characterization. 
Nevertheless, it follows under assumptions~\eqref{sda} from~\cite[Lemma~2.2, Lemma~2.4]{LW_DIE} that $\mathcal{K}:E_\xi\times E_\xi\to E_0$ is bilinear. 
  Consequently,  for $f:E_\xi\to E_0$ with $f(\phi):= \mathcal{K}(\phi,\phi)$ there exists a constant $C>0$ such that
\begin{equation}\label{ffff}
\|f(\phi)-f(\psi)\|_{E_0}\leq C(\|\phi\|_{E_\xi}+\|\psi\|_{E_\xi})\|\phi-\psi\|_{E_\xi},\qquad \phi\,,\psi\in E_\xi\,,
\end{equation}
hence $f$ satisfies in particular \eqref{Vor5} with $q=2$ and $\gamma=0$.
Now, we can write \eqref{FD.0} as a Cauchy problem
$$
\phi'=A\phi+f(\phi)\,,\quad t>0\,,\qquad \phi(0)=\phi_0
$$
and the observations above lead us to the following well-posedness result:

\begin{thm}\label{th1}
Suppose \eqref{sda} with $0\le \xi_0 <\xi< 1$ and $m>1$, and consider  $\alpha\in (0,\xi)$ with~${2\xi\le 1+\alpha}$.
Then, given $\phi_0\in E_\alpha$,  the  problem~\eqref{FD.0} has a unique maximal strong solution 
	\begin{equation*}
		\phi = \phi(\cdot;\phi_0)\in C\big([0,t^+(\phi_0)),E_\alpha\big)\cap C\big((0,t^+(\phi_0)),E_1\big)\cap C^1\big((0,t^+(\phi_0)),E_0\big)
	\end{equation*}
	with $t^+(\phi_0)\in (0,\infty]$ such that
	\begin{equation*}
		\lim_{t\to 0} t^{\mu}\|\phi(t)\|_{E_\xi}=0\,,
	\end{equation*} 
where $\mu:=1-\xi$ if $2\xi= 1+\alpha$ and $\mu>\xi-\alpha$ if $2\xi< 1+\alpha$.
The total mass is conserved, that is,
	\begin{equation}
	\int_0^\infty \phi(t,x) x\,\rd x= \int_0^\infty \phi_0(x) x\,\rd x\,, \qquad t\in [0,t^+(\phi_0))\,. \label{M.102}
	\end{equation} 
Moreover, if  $\phi_0\ge 0$, then $\phi(t)\ge 0$ for $t\in [0,t^+(\phi_0))$.  Finally,  the map~${[(t,\phi_0)\mapsto \phi(t;\phi_0)]}$ defines a semiflow on $E_\alpha$. 
\end{thm}

\begin{proof}
The case $2\xi< 1+\alpha$  is treated in \cite[Theorem~1.1]{LW_DIE}. 
The existence and uniqueness result for the case $2\xi= 1+\alpha$ is a consequence of the above observations \eqref{Vor1x}, \eqref{ffff}, and Theorem~\ref{T:2}.
Conservation of mass and positivity of solutions can be shown as in the proof of \cite[Theorem~1.1]{LW_DIE}.
\end{proof}

In fact, additional properties  such as a refined global existence criterion (see \cite[Theorem~1.1]{LW_DIE}) 
and even global existence for positive initial values under more restrictive assumptions on the kernels (see \cite[Theorem~1.3]{LW_DIE}) can be derived. 
Of course, the case~${\alpha> \xi}$ can be included as well as pointed out in the introduction herein.
Moreover, when~${2\xi< 1+\alpha}$, it is actually possible to consider  $\alpha=0$ in Theorem~\ref{th1}, see  \cite[Theorem~1.1]{LW_DIE}.
 Finally, we point out that a maximal regularity approach does not seem to be available for the problem~\eqref{FD.0} in regard of the natural ambient 
 space~${E_0=L_1\big( (0,\infty),(x+x^m)\mathrm{d}x \big)}$.
}

\subsection{Example}\label{Exam1}  We investigate an asymptotic model  derived recently in \cite[Equation (6.18)]{CJ22}  
  describing the propagation of morning glory clouds. More precisely, we consider the system
\begin{subequations}\label{PB}
\begin{equation}\label{EE}
\left.
\arraycolsep=1.4pt
\begin{array}{rcl}
u_t+u u_x+v u_y&=&\nu\Delta u +\eta u+\beta v\\
u_x+v_y&=&0
\end{array}
\right\}\qquad \text{in $\0$\,, \, $t>0$\,,} 
\end{equation}
 in the   horizontally unbounded strip $ \0:=\R\times (0,1)$ or in the periodic strip $\0:=\s\times (0,1)$ (when we consider $2\pi$-periodic solutions with respect to $x$).
The constants in \eqref{EE} satisfy
\[
\nu \in(0,\infty)\,,\qquad  \eta,\,\beta\in\R\,.
\]
The unknown $(u,v)$  is assumed to satisfy  the boundary conditions
\begin{equation}\label{BC}
\left.
\arraycolsep=1.4pt
\begin{array}{rcl}
 u&=&0\qquad \text{on $\p\0$\,, \, $t>0$\,,} \\
 v&=&0\qquad \text{on $\{y=0\}$\,, \, $t>0$}\,,
\end{array}
\right\}
\end{equation}
and $u$ is known  initially
\begin{equation}\label{IC}
u(0)=u_0\,,
\end{equation}
where $u_0:\0\to\R$ is a given function.
\end{subequations}

Similarly as in the context of the viscous primitive equations of large scale ocean and atmosphere dynamics considered in \cite{CaTi07,GuMaRo01}, 
 the  velocity component $v$ can be eliminated from the problem  via the incompressibility condition  in \eqref{EE}  and the boundary condition in~\eqref{BC} 
according to
  \[
  v(z)=\int_0^yv_y(x,s)\,{\rm d}s=-\int_0^y u_x(x,s)\,{\rm d}s = -Tu_x(z)\,,\qquad z=(x,y)\in \Omega\,,
  \]
where, given $w\in L_2(\0)$, we define the mapping $Tw:\0\to\R$ by
  \begin{equation}\label{OpT}
Tw(z):=  \int_0^y w (x,s)\,{\rm d}s\,, \qquad z=(x,y)\in\0\,.
  \end{equation}
 As shown in \cite[Lemma 3]{MR23}, the nonlocal operator $T$ has the following properties:
\begin{lemma}\label{L:1}\phantom{a}
\begin{enumerate}
\item[(i)]   $T\in\kL(L_2(\0))$ satisfies $\|T\|_{\kL(L_2(\0))}\leq 1$;\\[-2ex]
\item[(ii)] $T\in\kL(H^s(\0))$ for all $s\in[0,2]$.
\end{enumerate}
\end{lemma}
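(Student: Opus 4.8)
The plan is to treat the two assertions separately, deriving (i) from a direct Hardy-type estimate and (ii) from the structural identities satisfied by $T$ together with interpolation.

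For (i), I would fix $w\in L_2(\0)$ and apply the Cauchy--Schwarz inequality to the inner integral: for a.e. $z=(x,y)\in\0$,
$$
|Tw(z)|^2=\Big|\int_0^y w(x,s)\,\rd s\Big|^2\le y\int_0^y |w(x,s)|^2\,\rd s\le \int_0^1 |w(x,s)|^2\,\rd s\,,
$$
where the last step uses $0<y\le 1$. Integrating first in $y\in(0,1)$ and then in $x$ (by Fubini) yields $\|Tw\|_{L_2(\0)}^2\le \|w\|_{L_2(\0)}^2$, which gives simultaneously $T\in\kL(L_2(\0))$ and the bound $\|T\|_{\kL(L_2(\0))}\le 1$. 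The argument is unchanged in the periodic geometry $\0=\s\times(0,1)$.

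For (ii), the key observation is that $T$ is simply antidifferentiation in the vertical variable, so it interacts cleanly with the two partial derivatives. Concretely, for $w$ in a dense subspace of $H^s(\0)$ one has the identities
$$
\p_y(Tw)=w\qquad\text{and}\qquad \p_x(Tw)=T(\p_x w)\,,
$$
the first being the fundamental theorem of calculus applied to $s\mapsto w(x,s)$, the second differentiation under the integral sign. Using these I would verify boundedness on the integer scales $H^0,H^1,H^2$ by estimating each derivative in turn: every $x$-derivative is absorbed into $T$ and then controlled by part (i), while each $y$-derivative simply removes one copy of $T$. Thus $\|\p_x^j Tw\|_{L_2}=\|T\p_x^j w\|_{L_2}\le\|\p_x^j w\|_{L_2}$, $\|\p_x^j\p_y Tw\|_{L_2}=\|\p_x^j w\|_{L_2}$ (for $j=0,1$), and $\|\p_y^2 Tw\|_{L_2}=\|\p_y w\|_{L_2}$, so that $T\in\kL(H^k(\0))$ for $k\in\{0,1,2\}$.

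To cover the non-integer exponents I would invoke the interpolation property of bounded operators: since $H^s(\0)\doteq[L_2(\0),H^2(\0)]_{s/2}$ for $s\in[0,2]$ (the strip being a product domain with smooth, respectively periodic, cross-section) and $T$ is bounded at both endpoints $s=0$ and $s=2$, one concludes $T\in\kL(H^s(\0))$ for every $s\in[0,2]$. The only point that requires genuine care—rather than a true obstacle—is the justification of $\p_y(Tw)=w$ and $\p_x(Tw)=T\p_x w$ as honest $L_2$, hence distributional, identities; this rests on the absolute continuity of $s\mapsto Tw(x,s)$ for a.e. $x$ together with an approximation by smooth functions to legitimize differentiation under the integral. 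Once these identities are secured, both the integer estimates and the interpolation step are routine.
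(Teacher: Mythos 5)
Your proof is correct. Note that the paper does not actually prove this lemma: it is quoted from \cite[Lemma~3]{MR23}, so there is no in-paper argument to compare against; your route (Cauchy--Schwarz in the vertical variable for (i); the commutation identities $\p_y(Tw)=w$, $\p_x(Tw)=T(\p_x w)$ to get boundedness on $H^0$, $H^1$, $H^2$, followed by interpolation for (ii)) is the natural one and, as far as the citation suggests, essentially the argument of the cited lemma. The only steps worth making explicit are the ones you already flag: the distributional identities (justified by the ACL characterization of Sobolev functions, with $\p_x(Tw)=T(\p_x w)$ invoked only for $w\in H^1$), and the identification $H^s(\0)\doteq[L_2(\0),H^2(\0)]_{s/2}$, which for the strip $\R\times(0,1)$ (or $\s\times(0,1)$) requires the existence of a bounded extension operator --- available here since the boundary consists of two smooth parallel lines, e.g.\ by reflection across $\{y=0\}$ and $\{y=1\}$ with cutoffs.
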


Based on this, we may now reformulate \eqref{EE} as a sole evolution equation  for $u$ that takes the form 
 \begin{subequations}\label{PB'}
\begin{equation}\label{EE'}
u_t=\nu\Delta u -u u_x + u_yTu_x+\eta u-\beta Tu_x \qquad \text{in $\0 $\,,\,  $t>0$\,,} 
\end{equation}
subject to the homogeneous Dirichlet boundary condition
\begin{equation}\label{BC'}
 u=0\qquad \text{on $\p\0$\,, \, $t>0$\,,} 
\end{equation}
and the initial condition
\begin{equation}\label{IC'}
u(0)=u_0\,.
\end{equation}
\end{subequations}

This problem has recently been investigated  in  \cite{CJ24}, where it  is shown that, in the presence of an additional source term in \eqref{EE}$_1$, the system possesses traveling-wave solutions.
  In~\cite{MR23}, existence of global weak solution for initial data in $L_2(\0)$ and local-well-posedness in~${H^s_D(\0)}$ with $s\in(1,2)$ has been established in the non-periodic case $\0=\R\times (0,1)$, where
 given~${s\in[0,2]}$, we set 
\begin{equation}\label{Hsd}
H^s_D(\0):=
\left\{
\arraycolsep=1.4pt
\begin{array}{lcl}
H^{s}(\0)\,,&\quad \text{if $s\in[0,1/2)$\,,} \\[1ex]
 \{u \in H^{s}(\0)\,:\, \text{$u=0$ on $\p\0$}\}\,, & \quad\text{if $s\in(1/2,2]$\,.}
\end{array}
\right.
\end{equation}
Subsequently, the existence of global strong solutions in the periodic case $\0=\s\times(0,1)$ for initial data  in $  H^1(\0)\cap L_\infty(\0)$  or in Wiener-like spaces 
 has been investigated in \cite{OB24} under smallness conditions on the initial data and under certain restrictions on the parameters~${\eta,\,\beta,\, \nu}$.
Herein, we consider   both the periodic and the  non-periodic setting simultaneously and use the abstract results in Theorem~\ref{T:2} and Corollary~\ref{C1} to prove that 
problem~\eqref{PB'} generates a semiflow in~$H^1_D(\0)$, which is a critical case that cannot be covered by  the results from \cite{MW_PRSE}. 
Moreover, under suitable assumptions on the parameters $\eta,\,\beta,\, \nu$  it is shown that solutions which are initially small   in~$H^1_D(\0)$ 
 converge at an exponential rate towards the zero solution as time elapses. 

\begin{thm}\label{T:A1}
Let $\nu \in(0,\infty)$ and $\eta,\,\beta\in\R$. 
Then, given  $u_0\in H^1_D(\0)$, there exists a unique maximal strong  solution $u=u(\cdot; u_0)$ to  problem~\eqref{PB'}  with 
\begin{equation*}
u\in C\big((0, t^+), H^2(\0)\big)\cap C^1\big((0,t^+),L_2(\0)\big)\cap C\big([0,t^+),H^1_D(\0)\big)\cap C_{1/4}\big((0,T], H^{3/2}(\0)\big) 
\end{equation*}
for all $0<T < t^+$, where $t^+:=t^+(u_0)\in(0,\infty] $ is the maximal existence time of the solution.
 Moreover, if
 \begin{itemize}
 \item $\0=\R\times(0,1)$ and   $\eta +\cfrac{\vert\beta\vert\pi}{2}+\max\left\{\cfrac{\vert\beta\vert}{2\nu},\pi\right\}\left(\cfrac{\vert\beta\vert}{2}-\pi\nu\right)<0$,
 \end{itemize}
 or
 \begin{itemize} 
\item $\0=\s\times(0,1)$ and $ \eta +\cfrac{\beta^2}{16\nu}<\pi^2\nu$,
 \end{itemize}
 then there exist constants $r,\,\varpi\in (0,1)$ and $M\geq 1$ such that for all $\|u_0\|_{H^1}\leq r$ the solution~${u=u(\cdot; u_0)}$ to~\eqref{PB'}   is globally defined, that is, $t^+(u_0)= \infty$, and
 $$
 \|u(t)\|_{H^1}+t^{1/4}\|u(t)\|_{H^{3/2}}\le M\, e^{-\varpi t}\,\|u_0\|_{H^1} \,,\quad t>0\,.
$$
\end{thm}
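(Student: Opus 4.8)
The plan is to verify the hypotheses of Theorem~\ref{T:2} and Corollary~\ref{C1} for a suitable abstract realization of problem~\eqref{PB'} and then translate the abstract conclusions into the concrete statement. First I would set the functional-analytic stage by choosing $E_0:=L_2(\0)$ and $E_1:=H^2_D(\0)$ (the domain of the Dirichlet Laplacian), and taking $A:=\nu\Delta$ subject to the homogeneous Dirichlet condition, so that $A\in\kH(E_1,E_0)$ generates an analytic semigroup with $s(A)<0$ (the first Dirichlet eigenvalue governs the spectral bound, giving $s(A)=-\pi^2\nu$ in the periodic case and $s(A)\le-\pi^2\nu$ in general). Using the complex interpolation scale $E_\theta\doteq H^{2\theta}_D(\0)$ for $2\theta\notin\{1/2,3/2\}$, the target regularity $H^1_D$, $H^{3/2}$, $H^2$ forces the choice $\alpha=1/2$, $\xi=3/4$, $\gamma=0$, and then the critical relation $q=(1+\gamma-\alpha)/(\xi-\alpha)=2$ is read off directly; note $\mu=\xi-\alpha=1/4$ matches the time-weight exponent $C_{1/4}((0,T],H^{3/2})$ in the statement. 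Assumption~\eqref{Vor3} holds by Remark~\ref{R00} for the complex interpolation functor.

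The substantive point is to identify the nonlinearity and verify the superlinear Lipschitz bound~\eqref{Vor5} with $q=2$, $\gamma=0$, $\xi=3/4$. I would define $f(u):=-uu_x+u_yTu_x+\eta u-\beta Tu_x$. The linear part $\eta u-\beta Tu_x$ can be absorbed into $A$ (replacing $A$ by $\nu\Delta+\eta-\beta T\partial_x$, which is still a generator with a computable spectral bound—this is where the explicit parameter conditions on $\eta,\beta,\nu$ will enter), leaving the genuinely quadratic terms $f(u)=-uu_x+u_yTu_x$ as the superlinear nonlinearity with $f(0)=0$. The key estimate is then
\begin{equation*}
\|uu_x\|_{L_2}+\|u_yTu_x\|_{L_2}\lesssim \|u\|_{H^{3/2}}^2,
\end{equation*}
together with its bilinear (difference) version yielding~\eqref{Vor5}. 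I would prove this via the algebra/multiplication properties of Sobolev spaces on the two-dimensional domain $\0$ (dimension $2$): since $H^{3/2}(\0)\hookrightarrow L_\infty(\0)$ barely fails but $H^{3/2}$ does embed into every $L_p$, one estimates $\|uu_x\|_{L_2}\le\|u\|_{L_\infty}\|u_x\|_{L_2}$ via a product estimate $\|uw\|_{L_2}\lesssim\|u\|_{H^{3/2}}\|w\|_{H^{1/2}}$ with $w=u_x$, using $u_x\in H^{1/2}$ when $u\in H^{3/2}$; the term $u_yTu_x$ is handled identically since Lemma~\ref{L:1}(ii) gives $T\in\kL(H^{1/2}(\0))$, so $Tu_x\in H^{1/2}$ and $u_y\in H^{1/2}$.

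Once~\eqref{Vor} is verified, Theorem~\ref{T:2}(i) delivers the unique maximal strong solution with exactly the regularity class asserted, and the global existence and exponential decay follow from Corollary~\ref{C1} provided $s(A)<0$ for the modified generator $A=\nu\Delta+\eta-\beta T\partial_x$. The main obstacle, and the step requiring genuine care, is establishing this negativity of the spectral bound under the two stated parameter regimes. For the periodic case one uses Fourier expansion in $x$ together with the one-dimensional Dirichlet spectral gap $\pi^2$ in $y$ and a sharp bound on the skew contribution of $-\beta T\partial_x$; completing the square to absorb $\beta Tu_x$ against the diffusion yields the threshold $\eta+\beta^2/(16\nu)<\pi^2\nu$. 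For the non-periodic strip $\0=\R\times(0,1)$ the continuous $x$-spectrum precludes a spectral gap in $x$, so one instead estimates the numerical range directly: testing $Au$ against $u$ and controlling $\int u_yTu_x$ and $\int Tu_x\,u$ via Lemma~\ref{L:1}(i) (the bound $\|T\|\le1$) and Poincaré's inequality in $y$ gives the dissipativity condition $\eta+|\beta|\pi/2+\max\{|\beta|/(2\nu),\pi\}(|\beta|/2-\pi\nu)<0$. I would present the two energy computations separately, as their sharp constants are exactly what produce the two displayed parameter inequalities.
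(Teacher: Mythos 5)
Your proposal follows essentially the same route as the paper's proof: the identical functional setting $E_\theta=[L_2(\Omega),H^2_D(\Omega)]_\theta$ with $\gamma=0$, $\alpha=1/2$, $\xi=3/4$, $q=2$, the same splitting into the generator $A=\nu\Delta_D+\eta-\beta T\partial_x$ (Lemma~\ref{L:1} plus perturbation) and the quadratic nonlinearity $f(u)=u_yTu_x-uu_x$, the same energy/numerical-range estimate with Poincar\'e and $\|T\|\le 1$ in the non-periodic case and the same Fourier-mode analysis exploiting the skew structure of $T\partial_x$ (which is exactly what improves $\beta^2/(4\nu)$ to $\beta^2/(16\nu)$) in the periodic case, followed by Theorem~\ref{T:2} and Corollary~\ref{C1}. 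Two harmless slips: in dimension two $H^{3/2}(\Omega)\hookrightarrow L_\infty(\Omega)$ does hold (it is $H^1$ that barely fails; the paper instead uses $H^{1/2}\hookrightarrow L_4$), and the term $\int u_y\,Tu_x$ you mention when estimating the spectral bound belongs to the nonlinearity $f$, not to $A$.
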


The proof of Theorem~\ref{T:A1} is based  upon the auxiliary results of Lemma~\ref{L:2}-Lemma~\ref{L:3} below  and is presented at the end of this section.

 To recast the Cauchy problem~\eqref{PB'}  into an appropriate functional analytic framework, we use the well-known fact 
\[
\Delta_D:=\Delta|_{H^2_D(\0)}\in\mathcal{H}\big( H^2_D(\0), L_2(\0)\big)\,,
\] 
 see e.g.  \cite[Theorem 13.4]{Am84}.
Moreover, letting  $[\cdot,\cdot]_\theta$ be the complex interpolation functor, we have
\begin{equation}\label{f3}
 \big[H^{s_1}_D(\0),H^{s_2}_D(\0)\big]_\theta\doteq H^s_D(\0)\,,\qquad s_1,\, s_2\in[0,2]\,,\quad  s:=(1-\theta )s_1+\theta s_2\neq 1/2\,,
\end{equation}
see  \cite[Theorem 13.3]{Am84}.
 
 We  now set
\[
E_0:=L_2(\0)\,,\qquad E_1:=H^{2}_D(\0)\,,\qquad E_\theta:=[E_0,E_1]_\theta\,,\quad \theta\in[0,1]\,,
\]
and fix
\begin{equation} \label{ddfdf1}
q=2\,,\qquad 0 =:\gamma<\alpha:=\frac{1}{2}<\xi:=\frac34\,.
\end{equation}
We then have
\begin{equation} \label{ddfdf2}
q(\xi-\alpha)=1+\gamma-\alpha\,,
\end{equation}
and, recalling \eqref{f3},
\[
E_\xi=H^{3/2}_D(\0)\hookrightarrow  E_\alpha=H^{1}_D(\0)\,.
\]
 Defining 
\begin{equation*}
Au:=\nu \Delta_D u+\eta u-\beta Tu_x\,, 
\end{equation*}
 the perturbation result \cite[I.Theorem~1.3.1~(ii)]{LQPP} and Lemma~\ref{L:1}  ensure that
 \begin{equation}\label{eq:gen}
 A\in \mathcal{H}( H^2_D(\0), L_2(\0))\,.
 \end{equation}
Introducing the nonlinear function $f: E_\xi\to E_0$  by 
\begin{equation}\label{nonf}
f(u):= u_yTu_x-u u_x\,,
\end{equation}
we may   formulate \eqref{PB'} as the semilinear evolution problem
\begin{equation}\label{SEE}
u'=Au+f(u)\,,\quad t>0\,,\qquad u(0)=u_0\,.
\end{equation}

 We now provide sufficient conditions on the constants $\eta,\, \beta$, and $\nu$    guaranteeing that the semigroup $(e^{tA})_{t\ge 0}$ generated by $A$ on $E_0=L_2(\Omega)$ exhibits exponential decay.

\begin{lemma}[Non-periodic case]\label{L:2}
 If $\0=\R\times(0,1)$, then
\begin{equation}\label{cond:abm}
s(A)\le \eta +\cfrac{\vert\beta\vert\pi}{2}+\max\left\{\cfrac{\vert\beta\vert}{2\nu},\pi\right\}\left(\cfrac{\vert\beta\vert}{2}-\pi\nu\right)\,.
\end{equation}
\end{lemma}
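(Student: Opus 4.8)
The plan is to estimate the spectral bound $s(A)$ by proving a resolvent or, more directly, a dissipativity-type bound: for any $u\in H^2_D(\Omega)$ I would compute the real part of the $L_2$-inner product $\re\langle Au,u\rangle_{L_2}$ and bound it above by $\big(\eta+\tfrac{|\beta|\pi}{2}+\max\{\tfrac{|\beta|}{2\nu},\pi\}(\tfrac{|\beta|}{2}-\pi\nu)\big)\|u\|_{L_2}^2$. Since $A$ generates an analytic semigroup on the Hilbert space $E_0=L_2(\Omega)$, such a one-sided bound on $\re\langle Au,u\rangle$ controls the numerical range and hence bounds $s(A)$ from above by the same constant. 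Concretely I would write $\re\langle Au,u\rangle_{L_2}=\nu\re\langle\Delta_D u,u\rangle+\eta\|u\|_{L_2}^2-\beta\re\langle Tu_x,u\rangle$ and treat the three terms separately.

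For the diffusion term, integration by parts together with the homogeneous Dirichlet condition gives $\re\langle\Delta_D u,u\rangle=-\|\nabla u\|_{L_2}^2$, and I would invoke the Poincar\'e inequality on the strip $\Omega=\R\times(0,1)$ in the $y$-variable, whose optimal constant is $\pi$ (the first Dirichlet eigenvalue of $-\partial_y^2$ on $(0,1)$ is $\pi^2$), so that $\|\partial_y u\|_{L_2}^2\ge \pi^2\|u\|_{L_2}^2$. This yields $\nu\re\langle\Delta_D u,u\rangle\le-\nu\|\partial_x u\|_{L_2}^2-\pi^2\nu\|u\|_{L_2}^2$, retaining the $\|\partial_x u\|_{L_2}^2$ term because it will be needed to absorb part of the cross term. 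The middle term contributes exactly $\eta\|u\|_{L_2}^2$.

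The crucial and most delicate step is the nonlocal cross term $-\beta\re\langle Tu_x,u\rangle$. Using Lemma~\ref{L:1}(i), namely $\|T\|_{\kL(L_2)}\le 1$, I would bound $|\langle Tu_x,u\rangle|\le\|Tu_x\|_{L_2}\|u\|_{L_2}\le\|u_x\|_{L_2}\|u\|_{L_2}$, but this alone is not sharp enough to produce the stated constant; instead I expect one must split the estimate and use Young's inequality $|\beta|\,\|u_x\|_{L_2}\|u\|_{L_2}\le \tfrac{\nu}{1}\|u_x\|_{L_2}^2\cdot(\cdots)+(\cdots)\|u\|_{L_2}^2$ with a free parameter, then optimize. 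The appearance of $\max\{|\beta|/(2\nu),\pi\}$ strongly suggests a case distinction in choosing the Young parameter: one balances against the available $\nu\|u_x\|_{L_2}^2$ in one regime and against $\pi^2\nu\|u\|_{L_2}^2$ in the other, with the threshold switching at $|\beta|/(2\nu)=\pi$. Getting this optimization to reproduce precisely the constant in~\eqref{cond:abm}, while correctly accounting for the additive $\tfrac{|\beta|\pi}{2}$ term (which I anticipate comes from a finer, non-$L_2$-bounded estimate of $T$ exploiting its integral structure against the $y$-derivative), is the main obstacle and the part requiring careful bookkeeping rather than any new idea.

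Once the pointwise-in-time bound $\re\langle Au,u\rangle\le\lambda_0\|u\|_{L_2}^2$ with $\lambda_0$ equal to the right-hand side of~\eqref{cond:abm} is established, I would conclude by the standard Lumer--Phillips/numerical-range argument: for $\re\lambda>\lambda_0$ the operator $\lambda-A$ is injective with closed range and its adjoint satisfies the analogous bound, whence $\lambda\in\rho(A)$ and therefore $\sigma(A)\subset\{\re\lambda\le\lambda_0\}$, i.e. $s(A)\le\lambda_0$, which is exactly~\eqref{cond:abm}.
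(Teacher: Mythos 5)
Your overall strategy---bounding $\re\langle Au,u\rangle_{L_2}$ by $\lambda_0\|u\|_{L_2}^2$ and concluding $s(A)\le\lambda_0$---is essentially the paper's: there one computes $\frac{\rd}{\rd t}\tfrac12\|e^{tA}u_0\|_2^2$ along semigroup trajectories (which is the same quantity) and then uses that the spectral bound is dominated by the growth bound, citing \cite[Corollary 2.3.2]{L95}. The genuine gap is that you stop exactly at the step that constitutes the proof, and your diagnosis of that step is wrong. You assert that $|\langle Tu_x,u\rangle|\le\|u_x\|_2\|u\|_2$ ``is not sharp enough'' to produce \eqref{cond:abm}, and that the summand $\vert\beta\vert\pi/2$ must come from ``a finer, non-$L_2$-bounded estimate of $T$ exploiting its integral structure''. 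In fact the crude bound $\|Tu_x\|_2\le\|u_x\|_2\le\|\nabla u\|_2$ from Lemma~\ref{L:1}~(i) is all that is used. For $\delta\in(0,1]$, Young's inequality and the Poincar\'e inequality $\pi^2\|u\|_2^2\le\|\nabla u\|_2^2$ give
\begin{equation*}
-\nu\|\nabla u\|_2^2+\eta\|u\|_2^2+\vert\beta\vert\,\|u\|_2\|\nabla u\|_2
\le\Big(\eta+\frac{\beta^2}{4\delta\nu}-(1-\delta)\pi^2\nu\Big)\|u\|_2^2=:\omega(\delta)\|u\|_2^2\,,
\end{equation*}
and one then minimizes $\omega(\delta)$ over the \emph{constrained} interval $\delta\in(0,1]$. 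The unconstrained minimizer is $\delta^*=\vert\beta\vert/(2\pi\nu)$; when $\delta^*\le1$ the minimum is $\omega(\delta^*)=\eta+\vert\beta\vert\pi-\pi^2\nu$, while when $\delta^*\ge1$ the constrained minimum is $\omega(1)=\eta+\beta^2/(4\nu)$. These two cases are exactly what the single formula \eqref{cond:abm} encodes: the threshold $\vert\beta\vert/(2\nu)=\pi$ is the constraint $\delta^*=1$ becoming active, not a switch between two absorption mechanisms, and no property of $T$ beyond $\|T\|_{\kL(L_2(\0))}\le1$ enters.

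Ironically, the decomposition you set up is strong enough to finish in one line, and you never needed to reproduce the paper's constant exactly---only to stay below it. Keeping $-\nu\|\partial_xu\|_2^2$ and applying Poincar\'e only in $y$ (first Dirichlet eigenvalue $\pi^2$ on $(0,1)$), completing the square in $\|\partial_xu\|_2$ gives $-\nu\|\partial_xu\|_2^2+\vert\beta\vert\,\|u\|_2\|\partial_xu\|_2\le\tfrac{\beta^2}{4\nu}\|u\|_2^2$, whence $\re\langle Au,u\rangle\le\big(\eta+\tfrac{\beta^2}{4\nu}-\pi^2\nu\big)\|u\|_2^2$. This constant is less than or equal to the right-hand side of \eqref{cond:abm} in both regimes (it differs by $-\pi^2\nu$ when $\vert\beta\vert\ge2\pi\nu$, and by $\tfrac{\beta^2}{4\nu}-\vert\beta\vert\pi\le0$ otherwise), so your route would prove the lemma, indeed a slightly sharper bound. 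As written, however, the proposal leaves the decisive estimate undone, declares it the main obstacle, and points to machinery (a refined estimate of $T$) that is neither needed nor available; so it is not yet a proof.
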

\begin{proof}
Given $ u_0\in E_0$, set $u(t):=e^{tA}u_0$, $t\geq 0$.
Then, since 
\[u\in C^1((0,\infty), E_1)\cap C([0,\infty),E_0)\,,
\] H\"older's inequality, Young's inequality,  Poincar\'e's inequality
 \[\|u\|_2^2 \leq \frac{1}{\pi^2}\|\nabla u\|_2^2,\qquad   u\in H^1_D(\0)\,,\]
for the strip-like domain $\0$ (the optimal Poincar\'e constant of $\0$ being the same as that of the interval $[0,1]$), and Lemma~\ref{L:1}~(i) ensure  for $t>0$  and each $\delta\in (0,1]$ that
\begin{align*}
\frac{\rd}{\rd t}\frac{\|u(t)\|_2^2}{2}&=\int_\0 [u(\nu\Delta u+\eta u-\beta Tu_x)](t)\,\rd z\\[1ex]
&=\int_\0 -\nu|\nabla u|^2(t)+\eta |u|^2(t)-[\beta u(Tu_x)](t)\,\rd z\\[1ex]
&\leq -\nu\|\nabla u(t)\|_2^2+\eta \|u(t)\|_2^2+|\beta|\, \|u(t)\|_2\|Tu_x(t)\|_2\\[1ex]
&\leq  \Big(\eta+\frac{\beta^2}{4 \delta\nu} - (1-\delta)\pi^2\nu\Big)\|u(t)\|_2^2\,. 
\end{align*} 
That is, $\|e^{tA}\|_{\kL(E_0)}\leq e^{\omega(\delta)  t}$ for all $t\geq 0$, where
$$
\omega(\delta):=\eta+\frac{\beta^2}{4 \delta\nu} - (1-\delta)\pi^2\nu\,,\quad \delta\in (0,1]\,.
$$
Consequently, we infer from \cite[Corollary 2.3.2]{L95} that
\begin{align*}
s(A)&\le \min\big\{\omega(\delta)\,;\,\delta\in (0,1]\big\}=\left\{\begin{array}{ll} \omega(1)\,,& d:=\frac{\vert\beta\vert}{2\pi\nu}\ge 1\,,\\[1mm]
\omega(d)\,,& d=\frac{\vert\beta\vert}{2\pi\nu}\le 1\,,\end{array}\right.\\
&= \eta +\cfrac{\vert\beta\vert\pi}{2}+\max\left\{\cfrac{\vert\beta\vert}{2\nu},\pi\right\}\left(\cfrac{\vert\beta\vert}{2}-\pi\nu\right)
\end{align*} 
as is easily checked.
\end{proof}

 In the periodic case  we obtain a slightly weaker constraint on the constants  compared to~\eqref{cond:abm}.
The proof exploits the fact that, since~$A\in \mathcal{H}(E_1,E_0)$ and the embedding~${E_1\hookrightarrow E_0}$ is compact, 
the spectrum $\sigma(A)$ of $A$ consists entirely of isolated  eigenvalues with finite algebraic multiplicities~\cite[Theorem~III.6.29]{Ka95}.
 
\begin{lemma}[Periodic case]\label{L:2'}
If $\0=\s\times(0,1)$ and
\begin{equation}\label{cond:abm'}
 \eta +\frac{\beta^2}{16\nu}< \pi^2\nu\,,
\end{equation}
then the spectral bound $s(A) $ of $A$ is negative.
\end{lemma}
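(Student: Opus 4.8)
The plan is to exploit the discreteness of $\sigma(A)$ recorded just above the statement: since every $\lambda\in\sigma(A)$ is then an isolated eigenvalue, it suffices to show that $\re\lambda<0$ for all of them, with a bound independent of $\lambda$. Given an eigenvalue $\lambda$ with eigenfunction $u\in E_1=H^2_D(\0)$, $u\neq 0$, I would take the $L_2(\0)$-scalar product of $Au=\lambda u$ with $u$ and integrate by parts. Using the homogeneous Dirichlet condition on $\s\times\{0,1\}$ together with periodicity in $x$ to discard all boundary terms, this yields
\[
\re\lambda\,\|u\|_2^2=-\nu\|\nabla u\|_2^2+\eta\|u\|_2^2-\beta\,\re\int_\0 (Tu_x)\,\bar u\,\rd z\,,
\]
so everything reduces to a sufficiently sharp bound on the indefinite convective cross term.

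The heart of the argument is the periodic estimate
\[
\Big|\beta\,\re\int_\0 (Tu_x)\,\bar u\,\rd z\Big|\le \frac{\beta^2}{16\nu}\,\|u\|_2^2+\nu\,\|u_x\|_2^2\,,
\]
which I would prove by expanding $u=\sum_{k\in\Z}e^{ikx}u_k$ with $u_k\in H^1_0(0,1)$. Since $T$ acts only in the $y$-variable, namely as the one-dimensional Volterra operator $Vg(y):=\int_0^y g\,\rd s$, Parseval gives $\int_\0(Tu_x)\bar u\,\rd z=2\pi\sum_k ik\int_0^1 (Vu_k)\bar u_k\,\rd y$, whose real part equals $-2\pi\sum_k k\,\im\int_0^1(Vu_k)\bar u_k\,\rd y$. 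Thus only the skew-symmetric part of $V$ survives, and since $V-V^*$ has kernel $\mathrm{sgn}(y-s)$ on the unit square, of Hilbert--Schmidt norm $1$, one gets $|\im\int_0^1(Vg)\bar g\,\rd y|=\tfrac12|\int_0^1((V-V^*)g)\bar g\,\rd y|\le\tfrac12\|g\|_{L_2(0,1)}^2$. A mode-wise Young inequality $\tfrac12|\beta||k|\,\|u_k\|_{L_2(0,1)}^2\le\tfrac{\beta^2}{16\nu}\|u_k\|_{L_2(0,1)}^2+\nu k^2\|u_k\|_{L_2(0,1)}^2$ (which is tight, by arithmetic--geometric mean), summed over $k$ and combined with $\|u_x\|_2^2=2\pi\sum_k k^2\|u_k\|_{L_2(0,1)}^2$, then gives the claim.

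Inserting this into the energy identity and writing $\|\nabla u\|_2^2=\|u_x\|_2^2+\|u_y\|_2^2$, the term $\nu\|u_x\|_2^2$ is exactly absorbed, leaving only $-\nu\|u_y\|_2^2$. The one-dimensional Poincar\'e inequality in the $y$-variable, $\|u_y\|_2^2\ge\pi^2\|u\|_2^2$ (optimal constant $\pi^2$ for Dirichlet data on $[0,1]$), then yields $\re\lambda\le\eta+\tfrac{\beta^2}{16\nu}-\pi^2\nu$, which is strictly negative by hypothesis~\eqref{cond:abm'} and independent of $\lambda$; hence $s(A)<0$.

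I expect the periodic cross-term estimate to be the main obstacle. Its decisive feature --- and the reason periodicity yields the improved constant $\beta^2/16\nu$ compared with the non-periodic Lemma~\ref{L:2} --- is that the cross term is paired mode-by-mode with $\nu\|u_x\|_2^2$ rather than with the full $\nu\|\nabla u\|_2^2$, so that the entire horizontal dissipation is consumed by the convective term while the vertical dissipation $\nu\|u_y\|_2^2$ is left intact for the Poincar\'e inequality. The two points requiring care are obtaining the constant $\tfrac12$ for the antisymmetric Volterra form (conveniently via its Hilbert--Schmidt norm) and recognizing that only this skew part enters; one should also verify that the Fourier reduction is compatible with the Dirichlet realization of $A$ and that the $x$-boundary terms genuinely vanish in the periodic geometry.
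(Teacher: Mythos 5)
Your proof is correct, and its skeleton coincides with the paper's: both reduce $s(A)<0$ to a statement about eigenvalues (legitimate since $E_1\hookrightarrow E_0$ compactly, so $\sigma(A)$ consists of isolated eigenvalues), both decompose in Fourier modes $e^{ikx}$, both bound the Volterra cross term modewise by $\tfrac12|\beta||k|\,\|u_k\|_{L_2(0,1)}^2$, and both finish with exactly the same Young inequality (producing $\beta^2/16\nu$ against the horizontal dissipation $\nu k^2$) and the vertical Poincar\'e/Wirtinger constant $\pi^2$. Where you genuinely differ is in how the key cross-term estimate is produced, and your route buys some simplifications. The paper works with the strong form of the eigenvalue problem: it writes an ODE system for each mode $u_n=f_n+ig_n$, tests with $f_n$ and $g_n$, and obtains the constant $\tfrac12$ from the Fubini-type identity $\int_0^1\big(|f_n|F|g_n|+|g_n|F|f_n|\big)\,\rd y=\big(\int_0^1|f_n|\,\rd y\big)\big(\int_0^1|g_n|\,\rd y\big)$ followed by Cauchy--Schwarz; you instead stay with the global energy identity and get the same constant by isolating the skew-adjoint part $V-V^{*}$ of the Volterra operator (kernel $\mathrm{sgn}(y-s)$, Hilbert--Schmidt norm $1$) --- your bound is not sharp (the operator norm of $V-V^{*}$ is $2/\pi<1$, which would even improve the hypothesis), but $\tfrac12$ suffices and matches the paper. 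Two further differences: the paper must treat the mode $n=0$ separately by a Sturm--Liouville argument, whereas in your formulation the cross term carries an explicit factor $k$ and vanishes for $k=0$, so all modes are handled at once; and your argument yields the uniform bound $\re\lambda\le\eta+\beta^2/(16\nu)-\pi^2\nu$ for every eigenvalue, from which $s(A)<0$ is immediate because $\sigma(A)$ contains nothing but eigenvalues, while the paper shows only that no eigenvalue with $\re\lambda\ge0$ exists and leaves implicit the (standard, via sectoriality plus discreteness) passage from that to strict negativity of the supremum. The two points you flagged as needing care are indeed unproblematic: periodicity in $x$ and the Dirichlet condition at $y\in\{0,1\}$ remove all boundary terms, and $H^2$-regularity of eigenfunctions justifies the Parseval manipulations.
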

\begin{proof}
 Note that $\lambda\in\mathbb{C}$ is an eigenvalue of $A$ iff there exists a solution  $0\neq u\in H^2_D(\0)$ to
 \begin{equation}\label{BP}
\left.
\arraycolsep=1.4pt
\begin{array}{rcl}
\Delta u+a u+b Tu_x &=&0\quad \text{in $\0$\,,} \\
 u&=&0\quad \text{on $\p\0$\,,}
\end{array}
\right\}
\end{equation}
 where, letting $i$ denote the imaginary unit, we set 
 \[
a=\frac{\eta-\lambda}{\nu}=a_1+ia_2\,,\quad a_1,\, a_2\in\R\,,\qquad\text{and}\qquad b=-\frac{\beta}{\nu}\,. 
 \]
 Then, standard elliptic regularity, e.g. \cite[Theorem 8.13]{GT01}, ensures that  $u\in  C^\infty(\ov\0)$ and therefore, for each $n\in\Z$, the function $u_n:[0,1]\to \mathbb{C}$ with 
 \[
u_n(y):=\int_0^{2\pi} u(x,y)e^{inx}\, \rd x\,,\qquad y\in[0,1]\,,
 \] 
 satisfies $u_n\in C^\infty([0,1])$.
 Moreover, since $\{e^{inx}\,:\,n\in\Z\}$ is  an orthogonal basis of $L_2(\s)$,  problem \eqref{BP} is equivalent to 
  \begin{equation}\label{BPn}
\left.
\arraycolsep=1.4pt
\begin{array}{lcl}
u_n''+(a-n^2) u_n-in b Fu_n = 0\qquad \text{in  $(0,1)$\,,} \\[1ex]
 u_n(0)=u_n(1)=0
\end{array}
\right\} \qquad\text{for all $n\in\Z$\,,}
\end{equation}
where, given $f\in C([0,1])$, we set 
\[
Ff(y):=\int_0^y f(s)\,\rd s, \qquad y\in[0,1]\,.
\]
If $n=0$, the  Sturm-Liouville problem \eqref{BPn} may posses  a nontrivial solution $u_0$ only if~${a\in\R}$.
Moreover, if $a\in\R$, then \eqref{BPn}  has only the trivial solution $u_0=0$ for $a<\pi^2$.

Assume  now $0\neq n\in \Z$ and let $f_n,\, g_n\in C^\infty([0,1],\R)$ denote the real and imaginary parts of $u_n$, that is 
$u_n=f_n+ig_n$. 
Then, $f_n$ and $g_n$ solve the coupled problem
  \begin{equation}\label{BPfg}
\left.
\arraycolsep=1.4pt
\begin{array}{lcl}
f_n''+(a_1-n^2) f_n-a_2g_n+n b Fg_n =0\qquad \text{in  $(0,1)$,} \\
g_n''+(a_1-n^2) g_n+a_2f_n-n b Ff_n =0\qquad \text{in  $(0,1)$,} \\
 f_n(0)=g_n(0)=f_n(1)=g_n(1)=0\,.
\end{array}
\right\} 
\end{equation}
Testing \eqref{BPfg}$_1$  and  \eqref{BPfg}$_2$ with $f_n$ and $g_n$, respectively, and taking the sum of the resulting identities,  we end up with 
\begin{equation}\label{RE}
nb\int_0^1 \Big( f_nFg_n-g_nFf_n\Big)\, \rd y=(n^2-a_1)\int_0^1|u_n|^2\, \rd y+\int_0^1|u_n'|^2\, \rd y\,.
\end{equation}
The boundary conditions \eqref{BPfg}$_3$ enable us to use Wirtinger's inequality for functions, see e.g.~\cite[Section 7.7]{HLP52}, to estimate
\[
\int_0^1|u_n'|^2\, \rd y\geq \pi^2 \int_0^1|u_n|^2\, \rd y\,.
\]
Moreover,  Fubini's theorem and H\"older's inequality   imply
\begin{align*}
nb\int_0^1  \Big(f_nFg_n-g_nFf_n\Big)\, \rd y&\leq |nb|\int_0^1   \Big(|f_n\,|F|g_n|+|g_n|\, F|f_n|\Big)\, \rd y\\
&= |nb|\Big(\int_0^1 |f_n| \, \rd y\Big)\Big(\int_0^1 |g_n| \, \rd y\Big)\leq |nb|\,\|f_n\|_2\,\|g_n\|_2\\
&\leq \frac{ |nb|}{2}\int_0^1|u_n|^2\, \rd y\,.
\end{align*}
These estimates together with \eqref{RE} lead us to
 \begin{equation*}
 \Big(n^2+\pi^2-a_1-\frac{ |nb|}{2}\Big)\int_0^1|u_n|^2\, \rd y\leq 0\,.
\end{equation*}
It is however not difficult to verify that the condition \eqref{cond:abm'} ensures that for all $\lambda\in \mathbb{C}$ with~$\re\lambda\geq 0$ and~$n\in\Z$ it holds that    $n^2+\pi^2>a_1+ |nb|/2$, hence 
$u_n=0$ for all $n\in\Z$. Consequently,  all eigenvalues of $A$ have a negative real part and the claim follows.
\end{proof}

 Regarding the nonlinearity  $  f(u)= u_yTu_x-u u_x$  defined in \eqref{nonf} we establish the following result.  Recall that $E_\xi=H_D^{3/2}(\Omega)$.

\begin{lemma}\label{L:3}
The function  $f:E_\xi\to E_0$  satisfies $f(0)=0$ and is locally Lipschitz continuous in the sense there exists a constant $C>0$ such that for all $u_1,\, u_2\in E_\xi$ we have
\begin{equation}\label{Eq:LE}
\|f(u_1)-f(u_2)\|_{E_0}\leq C(\|u_1\|_{E_\xi}+\|u_2\|_{E_\xi})\|u_1-u_2\|_{E_\xi}\,.
\end{equation}
\end{lemma}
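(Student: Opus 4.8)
The plan is to estimate the nonlinearity $f(u)=u_yTu_x-uu_x$ by reducing everything to $L_2$ bounds on products of functions and their first derivatives, using the Sobolev and multiplier structure of $H^{3/2}(\Omega)$ on the two-dimensional domain $\Omega=\R\times(0,1)$ or $\s\times(0,1)$. Since $f(0)=0$ is immediate, the whole content is the Lipschitz estimate~\eqref{Eq:LE}. I would first write the standard bilinear decomposition
\begin{align*}
f(u_1)-f(u_2)&=(u_1)_yTu_{1,x}-(u_2)_yTu_{2,x}-(u_1u_{1,x}-u_2u_{2,x})\\
&=(u_1-u_2)_yTu_{1,x}+(u_2)_yT(u_1-u_2)_x\\
&\quad -(u_1-u_2)u_{1,x}-u_2(u_1-u_2)_x\,,
\end{align*}
so that it suffices to bound each product of the schematic form $(\partial w)\,(\partial z)$ or $w\,(\partial z)$ in $L_2=E_0$ by $\|w\|_{E_\xi}\|z\|_{E_\xi}$, where $w,z$ range over $\{u_1-u_2,u_1,u_2\}$. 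Note that $T$ and $\partial_x$ both map $H^{3/2}(\Omega)$ into $H^{1/2}(\Omega)$ boundedly: $\partial_x$ lowers smoothness by one and $T$ is bounded on $H^s$ for $s\in[0,2]$ by Lemma~\ref{L:1}(ii), so $Tu_x\in H^{1/2}(\Omega)$ with norm controlled by $\|u\|_{H^{3/2}}$.

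The key analytic input is a product estimate in the borderline two-dimensional setting. Both factors in each term live (at worst) in $H^{1/2}(\Omega)$, and the product of two $H^{1/2}$ functions on a planar domain need \emph{not} lie in $L_2$ — this is precisely the critical Sobolev threshold $s=d/2=1$ failing for the product map $H^{1/2}\times H^{1/2}\to L_2$. I would therefore not balance the derivatives symmetrically but instead keep the regularity asymmetric: in a term like $(u_1-u_2)_y\,Tu_{1,x}$ I would place the higher-order factor $(u_1-u_2)_y\in H^{1/2}(\Omega)$ and estimate the other factor $Tu_{1,x}$ in a space that pairs with $H^{1/2}$ into $L_2$. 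Concretely I expect to use a Hölder/Sobolev embedding argument: on the $2$-dimensional domain one has $H^{1/2+\varepsilon}(\Omega)\hookrightarrow L_p(\Omega)$ for suitable $p<\infty$, together with the dual exponent embedding, so that by Hölder's inequality $\|(\partial w)(\partial z)\|_{L_2}\le \|\partial w\|_{L_{p}}\|\partial z\|_{L_{p'}}$ with both $L_p$ and $L_{p'}$ norms controlled by $H^{3/2}$-norms of $w$ and $z$ via $\partial\colon H^{3/2}\to H^{1/2}\hookrightarrow L_p$. Since $H^{1/2}(\Omega)$ embeds into $L_p(\Omega)$ for every $p<\infty$ on this planar domain, one can choose $p=p'=4$ and obtain $\|(\partial w)(\partial z)\|_{L_2}\le \|\partial w\|_{L_4}\|\partial z\|_{L_4}\le C\|w\|_{H^{3/2}}\|z\|_{H^{3/2}}$.

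The main obstacle is exactly this borderline product estimate: naively both derivative factors sit in $H^{1/2}$, which is the critical space where the product map into $L_2$ fails, so the argument must exploit the extra integrability $H^{1/2}(\Omega)\hookrightarrow L_4(\Omega)$ (valid because $1/2>2\cdot(1/4-1/4)$ is comfortably subcritical for $L_4$ in dimension two) rather than a direct multiplication theorem. After establishing the four term-by-term bounds in this way, summing them and using $\|Tu_x\|_{H^{1/2}}\le\|T\|_{\mathcal{L}(H^{1/2})}\|u_x\|_{H^{1/2}}\le C\|u\|_{H^{3/2}}$ gives~\eqref{Eq:LE} with the symmetric factor $\|u_1\|_{E_\xi}+\|u_2\|_{E_\xi}$, which matches the $q=2$ case of~\eqref{Vor5}. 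A subsidiary point to check is boundary behaviour: the factors $u$ lie in $H^{3/2}_D(\Omega)$ so vanish on $\partial\Omega$, but $Tu_x$ and $u_y$ need not, and I would make sure the product and embedding estimates are applied in the full (unrestricted) spaces $H^{1/2}(\Omega)$ and $L_4(\Omega)$ where no trace constraint is needed, which is harmless since $H^{3/2}_D(\Omega)\hookrightarrow H^{3/2}(\Omega)$.
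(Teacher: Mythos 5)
Your proof is correct and follows essentially the same route as the paper: H\"older's inequality $\|vw\|_{L_2}\le\|v\|_{L_4}\|w\|_{L_4}$, the embedding $H^{1/2}(\Omega)\hookrightarrow L_4(\Omega)$, and Lemma~\ref{L:1}~(ii), applied to the bilinear decomposition of $f(u_1)-f(u_2)$ (which the paper leaves implicit, estimating only $\|f(u)\|_2\le C\|u\|_{E_\xi}^2$ and noting that the Lipschitz bound is an immediate consequence). One remark: your claimed ``main obstacle'' is not one --- the product map $H^{1/2}(\Omega)\times H^{1/2}(\Omega)\to L_2(\Omega)$ \emph{is} bounded in dimension two, precisely because of the embedding $H^{1/2}(\Omega)\hookrightarrow L_4(\Omega)$ that you invoke (the failure at the threshold $s=d/2$ concerns $H^1(\Omega)\not\hookrightarrow L_\infty(\Omega)$, which is not needed here), so the symmetric choice $p=p'=4$ goes through directly and no asymmetric balancing of regularity is required.
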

\begin{proof} Lemma~\ref{L:1}~(ii) together with  the embedding $H^{1/2}(\0) \hookrightarrow L_4(\0) $  yields
\begin{align*}
\|f(u)\|_2& \leq  \|u_y\|_4\|Tu_x\|_4+\|u\|_4 \|u_x\|_4\\
&\leq C(\|u_y\|_{H^{1/2}}\|Tu_x\|_{H^{1/2}}+\|u\|_{H^{1/2}} \|u_x\|_{H^{1/2}})\\
&\leq C\|u\|_{E_\xi}^2\,,
\end{align*}
and the claim  is an immediate consequence of this estimate.
\end{proof}

We are now in a position to establish Theorem~\ref{T:A1}.

\begin{proof}[Proof of Theorem~\ref{T:A1}]
Recalling~\eqref{Hsd}-\eqref{ddfdf2}, \eqref{eq:gen},  Lemma~\ref{L:2}, Lemma~\ref{L:2'}, and Lemma~\ref{L:3}, we may apply  Theorem~\ref{T:2} and Corollary~\ref{C1}
  to the semilinear evolution problem~\eqref{SEE}  and obtain the desired claim.
\end{proof} 

\subsection{Example }\label{Exam2} We revisit a classical semilinear heat equation investigated previously, e.g., in~\cite{We81, Fu66}  of the form
\begin{subequations}\label{FW}
\begin{equation}\label{FW1}
 \partial_t u=\Delta u +|u|^{\kappa-1}u\qquad \text{in $\0  $\,,\, $t>0 $\,,} 
\end{equation}
 with homogeneous Dirichlet boundary conditions
\begin{equation}\label{FW2}
 u=0\qquad \text{on $\p\0$\,, \, $t>0$\,,} \\
\end{equation}
and  a prescribed initial datum
\begin{equation}\label{FW3}
u(0)=u_0\,,
\end{equation}
where $u_0:\0\to\R$ is a given function, $\0\subset\R^n$ with $n\geq 1$ is a smooth bounded domain, and~${\kappa>1+2/n}$.
\end{subequations}

 As pointed out in \cite{PSW18}, problem~\eqref{FW} features a scaling invariance. 
 Indeed, if $u$ is a solution to \eqref{FW1} on $\R^n$, then for each $\lambda>0$ the function
\[
u_\lambda(t,x):=\lambda^{\tfrac{1}{\kappa-1}}u(\lambda t, \sqrt{\lambda}x)
\]
is again a solution to \eqref{FW1} and its seminorm
$$
\|\mathcal{F}^{-1}(\vert\cdot\vert^\mathsf{s}\mathcal{F}u_\lambda)\|_{L_p}
$$
is independent of $\lambda$ provided that $\mathsf{s}=s_c:=\frac{n}{p}-\frac{2}{\kappa-1}$.
This scaling invariance identifies~$H^{s_c}_{p}$ as a critical scaling invariant space for the evolution equation~\eqref{FW1}. We shall prove that problem~\eqref{FW} defines in fact a semiflow on the phase space $H^{s_c}_{p,D}(\0):=cl_{H_p^{s_c}} C_c^\infty(\0)$. More precisely:

\begin{thm}\label{T:A2}
Let $n\geq 1$, $\kappa>1+2/n$,  choose
\begin{equation*}
\max\Big\{1,\,\frac{n(\kappa-1)}{2\kappa}\Big\}<p<\frac{n(\kappa-1)}{2} \,,\qquad p\neq \frac{(n-1)(\kappa-1)}{2}\,,
\end{equation*}
and set
\begin{equation*}\label{bbp}
 0<s_c:=\frac{n}{p}-\frac{2}{\kappa-1}<s:=\frac{n(\kappa-1)}{p\kappa}<2\,,\qquad  \mu:=\frac{1}{\kappa-1}-\frac{n}{2p\kappa}\in(0,1)\,.
\end{equation*}
Then, given  $u_0\in H^{s_c}_{p,D}(\0)$, there exists a unique maximal strong  solution $u=u(\cdot; u_0)$ to  problem~\eqref{FW}  such that
\begin{equation*}
u\in C\big((0,t^+),H^2_{p}(\0)\big)\cap C^1\big((0,t^+),L_p(\0)\big)\cap C\big([0,t^+),H^{s_c}_{p,D}(\0)\big)\cap C_{\mu}\big((0,T], H^{s}_{p}(\0)\big) 
\end{equation*}
for all $0<T < t^+$, where $t^+:=t^+(u_0)\in(0,\infty] $ is the maximal existence time of the solution.
 Moreover, there exist constants $r,\,\varpi\in (0,1)$ and $M\geq 1$ such that for all $\|u_0\|_{H^{s_c}_{p}}\leq r$ the solution~${u=u(\cdot; u_0)}$ to~\eqref{FW}
    is globally defined, that is, $t^+(u_0)=\infty$, and
 $$
 \|u(t)\|_{H^{s_c}_{p}}+t^{\mu}\|u(t)\|_{H^{s}_p}\le M\, e^{-\varpi t}\,\|u_0\|_{H^{s_c}_p} \,,\quad t>0\,.
$$
\end{thm}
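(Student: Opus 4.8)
The plan is to recast \eqref{FW} in the abstract form \eqref{SCP} and to verify the assumptions \eqref{Vor}, so that Theorem~\ref{T:2} and Corollary~\ref{C1} apply directly. First I would set $E_0:=L_p(\0)$ and $E_1:=H^2_{p,D}(\0)$ with $A:=\Delta_D$ the Dirichlet Laplacian on $L_p(\0)$; that $A\in\mathcal{H}(E_1,E_0)$ is classical (cf. \cite[Theorem~13.4]{Am84} and its $L_p$-analogues). With $E_\theta:=[E_0,E_1]_\theta$ I would take $\gamma:=0$, $\alpha:=s_c/2$, and $\xi:=s/2$, so that $E_\alpha\doteq H^{s_c}_{p,D}(\0)$ and $E_\xi\doteq H^s_{p,D}(\0)$; a direct computation from the definitions of $s_c$, $s$, and $\mu$ then gives $\mu=\xi-\alpha$, $q=\kappa$, and the critical identity $q(\xi-\alpha)=1+\gamma-\alpha$. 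Since $\gamma=0$, the second requirement in \eqref{Vor3} is vacuous, while the first follows from the reiteration theorem for the complex method (as in Remark~\ref{R00}); here the hypothesis $p\neq(n-1)(\kappa-1)/2$ enters precisely to guarantee $s_c\neq 1/p$, so that the Dirichlet spaces interpolate as claimed.

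The heart of the argument is to check that $f(u):=|u|^{\kappa-1}u$ maps $E_\xi$ into $E_0=L_p(\0)$, vanishes at $0$, and satisfies \eqref{Vor5} with exponent $q=\kappa$. I would start from the pointwise inequality
\[
\big||w|^{\kappa-1}w-|v|^{\kappa-1}v\big|\le C\big(|w|^{\kappa-1}+|v|^{\kappa-1}\big)|w-v|\,,
\]
valid for $\kappa>1$, and then apply Hölder's inequality with the conjugate exponents $\kappa/(\kappa-1)$ and $\kappa$ to arrive at
\[
\|f(w)-f(v)\|_{L_p}\le C\big(\|w\|_{L_{p\kappa}}^{\kappa-1}+\|v\|_{L_{p\kappa}}^{\kappa-1}\big)\|w-v\|_{L_{p\kappa}}\,.
\]
The crucial point is that the choice $s=n(\kappa-1)/(p\kappa)$ makes the Sobolev embedding $H^s_p(\0)\hookrightarrow L_{p\kappa}(\0)$ exactly scaling-critical, since $1/(p\kappa)=1/p-s/n$; inserting this embedding into the previous estimate yields \eqref{Vor5} with a constant $N(R)$ even independent of $R$. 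This single scaling-critical embedding is where the hypotheses on $p$, $s$, and $\kappa$ are used essentially, and it is the step I expect to demand the most care, in particular the verification that the indices stay in the admissible range $0<s_c<s<n/p$ and avoid the trace value $1/p$ (the latter being automatic for $s$ under $\kappa>1+2/n$, and secured for $s_c$ by the stated exclusion of $p$).

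Granted these verifications, all conditions in \eqref{Vor} hold, so Theorem~\ref{T:2} supplies the unique maximal strong solution with the asserted regularity \eqref{reg:T0}, the time weight being $\mu=\xi-\alpha$. For the stability statement I would invoke that the Dirichlet Laplacian satisfies $s(A)=-\lambda_1<0$, where $\lambda_1>0$ is its first eigenvalue; choosing $\varpi\in(0,\min\{1,\lambda_1\})$ and applying Corollary~\ref{C1} then produces a neighborhood $U_\alpha$ of $0$ in $H^{s_c}_{p,D}(\0)$, global existence for $\|u_0\|_{H^{s_c}_p}\le r$, and the exponential decay estimate after reading off the norms $\|\cdot\|_\alpha=\|\cdot\|_{H^{s_c}_p}$ and $\|\cdot\|_\xi=\|\cdot\|_{H^s_p}$. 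Finally, I would note that the phase space $H^{s_c}_{p,D}(\0)=\mathrm{cl}_{H^{s_c}_p}C_c^\infty(\0)$ coincides with $E_\alpha$ under the same interpolation identification, which closes the argument.
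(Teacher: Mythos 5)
Your proposal is correct and follows essentially the same route as the paper's own proof: the identical choice of scales $E_0=L_p(\0)$, $E_1=H^2_{p,D}(\0)$, exponents $\gamma=0$, $\alpha=s_c/2$, $\xi=s/2$ with $q=\kappa$, the critical Sobolev embedding $H^s_p(\0)\hookrightarrow L_{p\kappa}(\0)$ combined with the pointwise inequality and H\"older to verify \eqref{Vor5}, and then Theorem~\ref{T:2} together with Corollary~\ref{C1} (using $s(\Delta_D)<0$) for the well-posedness and exponential decay. Your additional remarks — that $p\neq(n-1)(\kappa-1)/2$ ensures $s_c\neq 1/p$, that $s\neq 1/p$ is automatic from $\kappa>1+2/n$, and that Remark~\ref{R00} settles \eqref{Vor3} — merely make explicit what the paper leaves implicit.
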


\begin{proof}
 In order to recast \eqref{FW} into a suitable functional analytic  framework we define
$$
E_0:=L_p(\Omega)\,,\qquad E_1:=H_{p,D}^{2}(\Omega):=\{v\in H_{p}^{2}(\Omega)\,:\,   v=0 \text{ on } \partial\Omega\}\,,
$$
and set $E_\theta:=[E_0,E_1]_\theta$ for  $\theta\in[0,1]$, where  $[\cdot,\cdot]_\theta$ is  once more the  complex interpolation functor.
We then infer from \cite[\S 4]{Amann_Teubner}   that 
\begin{equation}\label{Lapla}
\Delta_D\in \mathcal{H}(E_1,E_0)
\end{equation}
and recall that
\begin{equation}\label{f2}
 E_\theta\doteq H_{p,D}^{2\theta}(\Omega)=\left\{\begin{array}{ll} \{v\in H_{p}^{2\theta}(\Omega) \,:\,    v=0 \text{ on } 
 \partial\Omega\}\,, &\frac{1}{p}<2\theta<2 \,,\\[3pt]
	 H_{p}^{2\theta}(\Omega)\,, & 0< 2\theta<\frac{1}{p}\,.\end{array} \right.
\end{equation}
We note that the constants $s$ and $s_c$   defined in \eqref{bbp} are both different from $1/p$. 
We set
\begin{equation} \label{fffv0} 
q:=\kappa>1\qquad\text{and}\qquad 0 =:\gamma<\alpha:=\frac{s_c}{2}<\xi:=\frac{s}{2}<1\,,
\end{equation}
with
\begin{equation}\label{fffv} 
q(\xi-\alpha)=1+\gamma-\alpha\,,
\end{equation}
and infer from \eqref{f2} that $E_\alpha=H^{s_c}_{p,D}(\0)$ and $E_\xi=H^{s}_{p,D}(\0)$.
With this notation we may formulate \eqref{FW} as the semilinear parabolic problem
\begin{equation}\label{SEE2}
u'=\Delta_D u+f(u)\,,\quad t>0\,,\qquad u(0)=u_0\,,
\end{equation} 
where $f:E_\xi\to E_0$ is defined by 
\[
f(u):=|u|^{\kappa-1}u\,.
\] 
In view of the Sobolev embedding $H^{s}_{p}(\0)\hookrightarrow L_{p\kappa}(\0)$ and making use of the inequality
\begin{equation}\label{eq:aux}
\big|\,|x|^{\kappa-1}x-|y|^{\kappa-1}y\,\big|\leq \kappa\big(|x|^{\kappa-1}+|y|^{\kappa-1})|x-y|\,,\qquad x,\, y\in\R\,,
\end{equation}
  we obtain via H\"older's inequality
\begin{align*}
\|f(u)-f(v)\|_{E_0}\leq \kappa (\|u\|_{E_\xi}^{\kappa-1}+\|v\|_{E_\xi}^{\kappa-1})\|u-v\|_{E_\xi},\qquad u,\, v\in E_\xi\,.
\end{align*}
This estimate, combined with \eqref{Lapla}-\eqref{fffv}  and Remark~\ref{R00}, allows us to apply Theorem~\ref{T:2}  to~\eqref{SEE2} and
 deduce the local well-posedness of this evolution problem in~$ H^{s_c}_{p,D}(\0)$.
Moreover,  since $s(\Delta_D)<0$, Corollary~\ref{C1} finally provides 
the exponential stability of the zero solution in the   critical space~$ H^{s_c}_{p,D}(\0)$.
\end{proof}

\subsection{Example }\label{Exam3} To exemplify also the strength of Theorem~\ref{T:0x}, we consider  the following quasilinear evolution equation from  \cite{PSW18, QS19}:
\begin{subequations}\label{Ex3}
\begin{equation}\label{Ex3a}
 \partial_tu={\rm div}(a(u)\nabla u) +|\nabla u|^{\kappa}\qquad \text{in $\0  $\,,\, $t>0 $\,,} 
\end{equation}
subject to homogeneous Neumann boundary conditions
\begin{equation}\label{Ex3b}
 \p_\nu u=0\qquad \text{on $\p\0$\,, \, $t>0$\,,} \\
\end{equation}
and initial condition
\begin{equation}\label{Ex3c}
u(0)=u_0\,,
\end{equation}
\end{subequations}
where ${\kappa>3}$, $u_0:\0\to\R$ is a given function,  and $\0\subset\R^n$  with $n\geq 1$ is a smooth bounded domain with outward unit normal $\nu$.

As pointed in \cite{PSW18}, problem~\eqref{Ex3} is scaling invariant if $a$ is a constant function. Indeed, in this case, if $u$ is a solution to \eqref{Ex3a} on $\R^n$, then for each $\lambda>0$ the function
\[
u_\lambda(t,x):=\lambda^{-\tfrac{\kappa-2}{2(\kappa-1)}}u(\lambda t, \sqrt{\lambda}x)
\]
is again a solution to \eqref{Ex3a} which identifies the phase space~$H^{s_c}_{p}$ with $s_c:=\frac{n}{p}+\frac{\kappa-2}{\kappa-1}$ as a critical scaling invariant space for the evolution equation~\eqref{Ex3a}. 
In fact, setting 
\begin{equation*}
 H_{p,N}^{s}(\Omega):=\left\{\begin{array}{ll} \{v\in H_{p}^{s}(\Omega) \,:\, \p_\nu v=0 \text{ on } 
 \partial\Omega\}\,, &1+\frac{1}{p}<s\leq 2 \,,\\[3pt]
	 H_{p}^{s}(\Omega)\,, & -1+\frac{1}{p}< s<1 +\frac{1}{p}\,,\end{array} \right.
\end{equation*}
we shall establish the following local well-posedness result in $H^{s_c}_{p,N}(\0)$ for the evolution problem~\eqref{Ex3}.

\begin{thm}\label{T:A3}
Let $\kappa>3$ and let  $a\in C^{1}(\R)$ be a  strictly positive function
with uniformly Lipschitz continuous derivative.
We choose $p\in (2n,(\kappa-1)n) $  with $p \neq (n-1)(\kappa-1)$ and~$\tau\in(0,1)$ such that
\begin{equation*}
\frac{1}{2}<2\tau<1-\frac{n}{p} 
\end{equation*}
and set
\begin{equation*}
 0<\bar s:=2\tau+ \frac{n}{p}<s_c:=\frac{n}{p}+\frac{\kappa-2}{\kappa-1}<s:=1+\frac{n(\kappa-1)}{p\kappa}<  2-2\tau\,,
\end{equation*}
as well as
\begin{equation*}
 \mu:=\frac{1}{2(\kappa-1)}-\frac{n}{2p\kappa}\in(0,1)\,,\qquad  \vartheta:=\frac{\kappa-2}{2(\kappa-1)}-\tau\in(0,1)\,.
\end{equation*}
Then, given  $u_0\in H^{s_c}_{p,N}(\0)$, there exists a unique maximal strong  solution $u=u(\cdot; u_0)$ to  problem~\eqref{Ex3}  such that
\begin{equation*}
\begin{aligned}
u&\in C\big((0,t^+), H^{2-2\tau}_{p}(\0)\big)\cap C^1\big((0,t^+), H^{-2\tau}_{p}(\0)\big)\cap C\big([0,t^+),H^{s_c}_{p,N}(\0)\big)\\[1ex]
 &\quad\cap  C^{\vartheta}\big([0,t^+(u_0)), H^{\bar s}_{p}(\0)\big)\cap C_{\mu}\big((0,T], H^{s}_{p}(\0)\big) 
\end{aligned}
\end{equation*}
for all $0<T < t^+$, where $t^+:=t^+(u_0)\in(0,\infty] $ is the maximal existence time of the solution.
\end{thm}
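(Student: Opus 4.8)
The plan is to recast~\eqref{Ex3} in the abstract form~\eqref{QCP} and to verify the hypotheses~\eqref{ASS}, after which Theorem~\ref{T:0x} yields the assertion.

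\textbf{Functional-analytic setting.} I would set $E_0:=H^{-2\tau}_p(\0)$ and $E_1:=H^{2-2\tau}_{p,N}(\0)$, the Neumann realization being meaningful since $2-2\tau>1+n/p\ge 1+1/p$. With $E_\theta:=[E_0,E_1]_\theta$, the complex interpolation of the Bessel scale gives $E_\theta\doteq H^{2\theta-2\tau}_{p}(\0)$ (carrying the Neumann condition exactly when $2\theta-2\tau>1+1/p$); the hypothesis $p\neq(n-1)(\kappa-1)$ is precisely what guarantees $s_c\neq 1+1/p$, so that $E_\alpha=H^{s_c}_{p,N}(\0)$ is unambiguous. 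I then take $q:=\kappa$ and choose $\gamma:=\tau$, $\beta:=\tau+\bar s/2$, $\alpha:=\tau+s_c/2$, $\xi:=\tau+s/2$, which identifies $E_\gamma=L_p(\0)$, $E_\beta=H^{\bar s}_p(\0)$, $E_\alpha=H^{s_c}_{p,N}(\0)$, and $E_\xi=H^{s}_p(\0)$. A direct computation with the stated parameter ranges yields $0<\gamma<\beta<\alpha<\xi<1$ together with $\xi-\alpha=\mu$, $\alpha-\beta=\vartheta$, and the critical identity $q(\xi-\alpha)=1+\gamma-\alpha$ (equivalently $\tfrac{\kappa}{2(\kappa-1)}+\tfrac{\kappa-2}{2(\kappa-1)}=1$); assumption~\eqref{Vor3q} holds by Remark~\ref{R00}.

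\textbf{The quasilinear part.} For $w\in O_\beta\subset E_\beta=H^{\bar s}_p(\0)$ I would define $A(w)$ as the $E_0$-realization of $v\mapsto\mathrm{div}(a(w)\nabla v)$ under homogeneous Neumann conditions. Since $\bar s-n/p=2\tau$, one has $E_\beta\hookrightarrow C^{2\tau}(\overline{\0})$, so $a(w)$ is bounded and, by the strict positivity of $a$, uniformly elliptic on balls of $E_\beta$. For $v\in E_1$ one has $\nabla v\in H^{1-2\tau}_p(\0)$, and the hypothesis $2\tau>1/2$ forces $2\tau>1-2\tau$, whence the $C^{2\tau}$-function $a(w)$ is a pointwise multiplier on $H^{1-2\tau}_p(\0)$; thus $a(w)\nabla v\in H^{1-2\tau}_p(\0)$ and $A(w)v=\mathrm{div}(a(w)\nabla v)\in H^{-2\tau}_p(\0)=E_0$, so $A(w)\in\mathcal{L}(E_1,E_0)$. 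Generation $A(w)\in\mathcal{H}(E_1,E_0)$ then follows from the theory of divergence-form operators with (Hölder) continuous coefficients on the extrapolated scale, perturbing the constant-coefficient Neumann Laplacian (cf.~\cite{PSW18,QS19}). Finally, writing $A(w_1)v-A(w_2)v=\mathrm{div}\big((a(w_1)-a(w_2))\nabla v\big)$ and using that $a\in C^1$ with uniformly Lipschitz derivative renders $w\mapsto a(w)$ locally Lipschitz into $C^{2\tau}(\overline{\0})$, one obtains $A\in C^{1-}\big(O_\beta,\mathcal{H}(E_1,E_0)\big)$, i.e.~\eqref{AS2}. Controlling the multiplier norm of $a(w_1)-a(w_2)$ on the negative-order space $E_0=H^{-2\tau}_p(\0)$ is the step I expect to demand the most care.

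\textbf{The semilinear part and conclusion.} Set $f(w):=|\nabla w|^\kappa$, so that $f(0)=0$ and $f:O_\xi\to E_\gamma=L_p(\0)$. For $w\in E_\xi=H^s_p(\0)$ one has $\nabla w\in H^{s-1}_p(\0)$, and since $\tfrac1p-\tfrac{s-1}{n}=\tfrac{1}{p\kappa}>0$ the critical Sobolev embedding $H^{s-1}_p(\0)\hookrightarrow L_{p\kappa}(\0)$ holds. Combining the elementary vector inequality $\big|\,|x|^{\kappa}-|y|^{\kappa}\,\big|\le\kappa\big(|x|^{\kappa-1}+|y|^{\kappa-1}\big)|x-y|$ for $x,y\in\R^n$ with Hölder's inequality (exponents $\kappa/(\kappa-1)$ and $\kappa$) and this embedding, I would obtain
\[
\|f(w)-f(v)\|_{L_p}\le C\big(\|w\|_{\xi}^{\kappa-1}+\|v\|_{\xi}^{\kappa-1}\big)\|w-v\|_{\xi}\,,\qquad w,v\in O_\xi\,,
\]
which is exactly~\eqref{AS4} with $q=\kappa$, so~\eqref{AS3}--\eqref{AS4} are verified. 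With~\eqref{ASS} in hand, Theorem~\ref{T:0x} applies and produces the maximal strong solution with regularity~\eqref{reg:T0x}; under the present identifications the space $C_\mu((0,T],E_\xi)$ becomes $C_\mu((0,T],H^s_p(\0))$ and $C^{\alpha-\beta}([0,t^+),E_\beta)$ becomes $C^{\vartheta}([0,t^+),H^{\bar s}_p(\0))$, yielding precisely the stated regularity of $u$.
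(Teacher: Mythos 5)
Your proposal is correct and follows the same architecture as the paper's proof: the same scale $E_\theta\doteq H^{2\theta-2\tau}_{p,N}(\0)$, the same exponents $q=\kappa$, $\gamma=\tau$, $\beta=\tau+\bar s/2$, $\alpha=\tau+s_c/2$, $\xi=\tau+s/2$ with the critical identity $q(\xi-\alpha)=1+\gamma-\alpha$, the same operators $A(u)v=\mathrm{div}(a(u)\nabla v)$ and $f(u)=|\nabla u|^\kappa$, the same embedding $H^{s-1}_p(\0)\hookrightarrow L_{p\kappa}(\0)$ for \eqref{AS4}, and the conclusion via Theorem~\ref{T:0x}. The two arguments differ only in how the hypotheses are verified, and this is worth comparing. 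First, the identification $[H^{-2\tau}_p(\0),H^{2-2\tau}_{p,N}(\0)]_\theta\doteq H^{2\theta-2\tau}_{p,N}(\0)$, which you attribute to interpolation of ``the Bessel scale,'' is not automatic on a negative-order scale with boundary conditions: the paper constructs the interpolation--extrapolation scale generated by $(L_p(\0),\Delta_N)$ and uses bounded imaginary powers of $\Delta_N-1$ to obtain the reiteration property \eqref{f3N}; that is exactly what legitimizes your appeal to Remark~\ref{R00} and the concrete identification of $E_\gamma$, $E_\beta$, $E_\alpha$, $E_\xi$. Second, for \eqref{AS2} the paper stays inside fractional Sobolev algebras, using the chain \eqref{f4} (note $\bar s-2\e>1-2\tau>n/p$), \cite[Lemma~4.1]{MW_PRSE} for $u\mapsto a(u)\in C^{1-}\big(H^{\bar s}_p(\0),H^{\bar s-2\e}_p(\0)\big)$, and \cite[Theorem~8.5]{Amann_Teubner} for generation; your alternative via $E_\beta\hookrightarrow C^{2\tau}(\ov\0)$ and the multiplier theorem for $C^{2\tau}$-functions on $H^{1-2\tau}_p(\0)$ (valid because $0<1-2\tau<2\tau$) is also sound and makes the role of the hypothesis $2\tau>1/2$ especially transparent. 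The one point to correct is your description of generation as ``perturbing the constant-coefficient Neumann Laplacian'': for genuinely variable coefficients a perturbation argument does not apply (it only works for coefficients close to constant), and one needs either a localization/coefficient-freezing argument or a ready-made result such as \cite[Theorem~8.5]{Amann_Teubner}, which is what the paper invokes; since that theorem covers precisely divergence-form operators with H\"older-continuous coefficients on extrapolated spaces, this is a citation-level inaccuracy rather than a mathematical gap.
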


\begin{proof}
We introduce
$$
F_0:=L_p(\Omega)\,,\qquad F_1:= W_{p,N}^{2}(\Omega)=H_{p,N}^{2}(\Omega)=\{v\in H_{p}^{2}(\Omega)\,:\,  \p_\nu u=0 \text{ on } \partial\Omega\}\,, 
$$
and note  from \cite[\S 4]{Amann_Teubner} that 
$$
B_0:=\Delta_N:=\Delta|_{W_{p,N}^{2}(\Omega)}\in \mathcal{H}\big(W_{p,N}^{2}(\Omega),L_p(\Omega)\big)\,.
$$
Let
$$
\big\{(F_\theta,B_\theta)\,:\, -1\le \theta<\infty\big\}\ 
$$
be the interpolation-extrapolation scale generated by $(F_0,B_0)$ and the 
complex interpolation functor~$[\cdot,\cdot]_\theta$ (see \cite[\S 6]{Amann_Teubner} and \cite[\S V.1]{LQPP}), that is,
\begin{equation}\label{f2x}
B_\theta\in \mathcal{H}(F_{1+\theta},F_\theta)\,,\quad -1\le \theta<\infty\,,
\end{equation}
with (see \cite[Theorem~7.1; Equation (7.5)]{Amann_Teubner})
\begin{equation}\label{f2N}
 F_\theta\doteq H_{p,N}^{2\theta}(\Omega)\,,\qquad   2\theta\in\Big(-1+\frac{1}{p},2\Big]\setminus\Big\{1+\frac{1}{p}\Big\}\,.
\end{equation}
Moreover, since
$\Delta_N-1$ has bounded imaginary powers (see~\cite[III.~Examples 4.7.3~(d)]{LQPP}), we infer from \cite[Remarks~6.1~(d)]{Amann_Teubner} that
\begin{equation}\label{f3N}
 [F_\beta,F_\alpha]_\theta\doteq F_{(1-\theta)\beta+\theta\alpha}\,,\qquad -1\leq \beta<\alpha\,,\quad \theta\in(0,1)\,.
\end{equation}
With  $p$ and $\tau$ fixed  as in the statement, we set
$$
E_\theta:= H^{2\theta-2\tau}_{p,N}(\Omega)\,,\qquad 2\tau+1+\frac{1}{p}\neq 2\theta\in[0,2]\,,
$$
 and point out that  $E_\theta=[E_0,E_1]_\theta$.
Note that none of the constants $\bar s$, $s$, and $s_c$  fixed in  the statement is equal to $1+1/p$. 
We set $q:=\kappa>3$,
\begin{equation} \label{bnnn1} 
 0<\gamma:=\tau<\beta:= \tau+ \frac{\bar s }{2}<\alpha:=\tau+ \frac{s_c}{2}<\xi:=\tau +\frac{s}{2}<1\,,
\end{equation}
and  observe that
\begin{equation}\label{bnnn2} 
q(\xi-\alpha)=1+\gamma-\alpha\,,
\end{equation}
and,  in view of \eqref{f2N},
\[
E_\xi=H^{s}_{p,N}(\0)\hookrightarrow E_\alpha=H^{s_c}_{p,N}(\0)\hookrightarrow E_\beta=H^{\bar s}_{p,N}(\0)\hookrightarrow E_\gamma=L_{p}(\0)\,.
\]
With this notation we may formulate \eqref{Ex3} as the quasilinear  parabolic problem
\begin{equation}\label{QEE2}
u'=A(u)u+f(u)\,,\quad t>0\,,\qquad u(0)=u_0\,,
\end{equation} 
where $A:E_\beta\to\kL(E_1,E_0)$ and $f:E_\xi\to E_\gamma$ are  defined by 
\[
A(u)v:={\rm div} (a(u)\nabla v)\,,\qquad  v\in E_1\,,\quad u\in E_\beta\,,
\]
and
\[
f(u):=|\nabla u|^{\kappa}\,,\qquad  u\in E_\xi\,.
\] 
Noticing that $ E_\xi= H^{s}_{p,N}(\0)\hookrightarrow H^1_{p\kappa}(\0)$, H\"older's inequality together with  inequality \eqref{eq:aux} yields
\begin{align}\label{Lips}
\|f(u)-f(v)\|_{E_\gamma}\leq \kappa (\|u\|_{E_\xi}^{\kappa-1}+\|v\|_{E_\xi}^{\kappa-1})\|u-v\|_{E_\xi}\,,\qquad u,\, v\in E_\xi\,.
\end{align}
In order to deal with the operator $A$, we choose 
$$2\e\in(0\,, 4\tau-1)\,$$
and recall from \cite[Equations~(5.2)-(5.6)]{Amann_Teubner} the embeddings
 \begin{equation}\label{f4}
 H_{p}^{\bar s}(\Omega)\hookrightarrow  W_{p}^{\bar s-\e}(\Omega) \hookrightarrow   H_{p}^{\bar s-2\e}(\Omega) \hookrightarrow   H_{p}^{1-2\tau }(\Omega)\,.
\end{equation}
Since~${1-2\tau>n/p}$, all these  spaces are algebras with respect to pointwise multiplication.
We then deduce from  \cite[Lemma 4.1]{MW_PRSE} that 
\[
[u\mapsto a(u)]\in C^{1-}(H^{\bar s}_p(\0),H^{\bar s-2\e}_p(\0))\,,
\]
 from which it readily follows that $A\in C^{1-}(E_\beta,\kL(E_1,E_0))$. 
 Moreover, given $u\in E_\beta$, note that \mbox{$a(u)\in H^{\bar s-2\e}_p(\0)\hookrightarrow C^\rho(\ov\Omega)$} with  $\rho:=2(\tau-\e)>1-2\tau>0$  and thus $A(u)\in \kH(E_1,E_0)$
due to  \cite[Theorem~8.5]{Amann_Teubner} and therefore $A\in C^{1-}(E_\beta,\kH(E_1,E_0))$.

The latter property together with \eqref{f3N}, \eqref{bnnn1}, \eqref{bnnn2}, and  \eqref{Lips} enables us to apply Theorem~\ref{T:0x} 
in the context of~\eqref{QEE2} to deduce the local well-posedness of this evolution problem in $ H^{s_c}_{p,N}(\0)$.
\end{proof}

\bibliographystyle{siam}
\bibliography{Literature}

\begin{thebibliography}{10}

\bibitem{AT88}
{\sc P.~Acquistapace and B.~Terreni}, {\em On quasilinear parabolic systems},
  Math. Ann., 282 (1988), p.~315–335.

\bibitem{OB24}
{\sc D.~Alonso-Or\'{a}n and R.~Granero-Belinchón}, {\em Well-posedness and
  decay for a nonlinear propagation wave model in atmospheric flows}.
\newblock arXiv:2401.09564.

\bibitem{Am84}
{\sc H.~Amann}, {\em Existence and regularity for semilinear parabolic
  evolution equations}, Ann. Scuola Norm. Sup. Pisa Cl. Sci. (4), 11 (1984),
  pp.~593--676.

\bibitem{Am88}
\leavevmode\vrule height 2pt depth -1.6pt width 23pt, {\em {Dynamic theory of
  quasilinear parabolic equations. {I}. {A}bstract evolution equations}},
  Nonlinear Anal., 12 (1988), p.~895–919.

\bibitem{Amann_Teubner}
\leavevmode\vrule height 2pt depth -1.6pt width 23pt, {\em {Nonhomogeneous
  linear and quasilinear elliptic and parabolic boundary value problems}}, in
  {Function spaces, differential operators and nonlinear analysis
  ({F}riedrichroda, 1992)}, vol.~133 of {Teubner-Texte Math.}, Teubner,
  Stuttgart, 1993, p.~9–126.

\bibitem{LQPP}
\leavevmode\vrule height 2pt depth -1.6pt width 23pt, {\em Linear and
  Quasilinear Parabolic Problems. {V}ol. {I}}, vol.~89 of Monographs in
  Mathematics, Birkh\"{a}user Boston, Inc., Boston, MA, 1995.
\newblock Abstract linear theory.

\bibitem{An90}
{\sc S.~B. Angenent}, {\em {Nonlinear analytic semiflows}}, Proc. Roy. Soc.
  Edinburgh Sect. A, 115 (1990), pp.~91--107.

\bibitem{BLL2020a}
{\sc J.~Banasiak, W.~Lamb, and {\relax Ph}.~Lauren\c{c}ot}, {\em Analytic
  Methods for Coagulation-Fragmentation Models. {V}ol. {I}}, Monographs and
  Research Notes in Mathematics, CRC Press, Boca Raton, FL, 2020.

\bibitem{BLL2020b}
\leavevmode\vrule height 2pt depth -1.6pt width 23pt, {\em Analytic Methods for
  Coagulation-Fragmentation Models, {V}ol. {II}}, Monographs and Research Notes
  in Mathematics, CRC Press, Boca Raton, FL, 2020.

\bibitem{CaTi07}
{\sc C.~Cao and E.~S. Titi}, {\em Global well-posedness of the
  three-dimensional viscous primitive equations of large scale ocean and
  atmosphere dynamics}, Ann. Math, 166 (2007), pp.~245--267.

\bibitem{ClementLi}
{\sc P.~Cl\'{e}ment and S.~Li}, {\em Abstract parabolic quasilinear equations
  and application to a groundwater flow problem}, Adv. Math. Sci. Appl., 3
  (1993/94), pp.~17--32.

\bibitem{CS01}
{\sc {\relax Ph}.~Cl\'{e}ment and G.~Simonett}, {\em Maximal regularity in
  continuous interpolation spaces and quasilinear parabolic equations}, J.
  Evol. Equ., 1 (2001), pp.~39--67.

\bibitem{CJ22}
{\sc A.~Constantin and R.~S. Johnson}, {\em On the propagation of nonlinear
  waves in the atmosphere}, Proc. A, 478 (2022), p.~20210895.

\bibitem{CJ24}
\leavevmode\vrule height 2pt depth -1.6pt width 23pt, {\em Atmospheric undular
  bores}, Math. Ann., 388 (2024), pp.~4011--4036.

\bibitem{DaP96}
{\sc G.~{Da Prato}}, {\em Fully nonlinear equations by linearization and
  maximal regularity, and applications}, in Partial differential equations and
  functional analysis, vol.~22 of Progr. Nonlinear Differential Equations
  Appl., Birkhäuser Boston, Boston, MA, 1996, p.~80–92.

\bibitem{DaPG79}
{\sc G.~{Da Prato} and P.~Grisvard}, {\em Equations d'évolution abstraites non
  linéaires de type parabolique}, Ann. Mat. Pura Appl. (4), 120 (1979),
  p.~329–396.

\bibitem{DaPL88}
{\sc G.~{Da Prato} and A.~Lunardi}, {\em Stability, instability and center
  manifold theorem for fully nonlinear autonomous parabolic equations in
  {B}anach space}, Arch. Rational Mech. Anal., 101 (1988), p.~115–141.

\bibitem{Fu66}
{\sc H.~Fujita}, {\em On the blowing up of solutions of the {C}auchy problem
  for {$u\sb{t}=\Delta u+u\sp{1+\alpha }$}}, J. Fac. Sci. Univ. Tokyo Sect. I,
  13 (1966), pp.~109--124.

\bibitem{GT01}
{\sc D.~Gilbarg and N.~S. Trudinger}, {\em {Elliptic Partial Differential
  Equations of Second Order}}, Springer Verlag, 2001.

\bibitem{G88}
{\sc D.~Guidetti}, {\em Convergence to a stationary state and stability for
  solutions of quasilinear parabolic equations}, Ann. Mat. Pura Appl. (4), 151
  (1988), p.~331–358.

\bibitem{GuMaRo01}
{\sc F.~Guill{\'e}n-Gonz{\'a}lez, N.~Masmoudi, and M.~A.
  Rodr{\'i}guez-Bellido}, {\em {Anisotropic estimates and strong solutions of
  the primitive equations}}, Differential and Integral Equations, 14 (2001),
  pp.~1381--1408.

\bibitem{HLP52}
{\sc G.~H. Hardy, J.~E. Littlewood, and G.~P\'{o}lya}, {\em Inequalities},
  Cambridge, at the University Press,, 1952.
\newblock 2nd ed.

\bibitem{HvNWV_III}
{\sc T.~Hyt\"onen, J.~van Neerven, M.~Veraar, and L.~Weis}, {\em Analysis in
  {B}anach Spaces. {V}ol. {III}. {H}armonic Analysis and Spectral Theory},
  vol.~76 of Ergebnisse der Mathematik und ihrer Grenzgebiete. 3. Folge. A
  Series of Modern Surveys in Mathematics [Results in Mathematics and Related
  Areas. 3rd Series. A Series of Modern Surveys in Mathematics], Springer,
  Cham, [2023] \copyright2023.

\bibitem{Ka95}
{\sc T.~Kato}, {\em {Perturbation Theory for Linear Operators}}, Springer
  Verlag, Berlin, 1995.

\bibitem{LW_EJAM}
{\sc {\relax Ph}.~Lauren\c{c}ot and {\relax Ch}.~Walker}, {\em The
  fragmentation equation with size diffusion: Well-posedness and long-term
  behavior}, European J. Appl. Math.,  (2022), pp.~1--34.
\newblock arXiv: 2104.14798.

\bibitem{LW_DIE}
\leavevmode\vrule height 2pt depth -1.6pt width 23pt, {\em Well-posedness of
  the coagulation-fragmentation equation with size diffusion}, Differential
  Integral Equations, 35 (2022), pp.~211--240.

\bibitem{CPW10}
{\sc J.~LeCrone, J.~Pr\"{u}ss, and M.~Wilke}, {\em On quasilinear parabolic
  evolution equations in weighted {$L_p$}-spaces {II}}, J. Evol. Equ., 14
  (2014), pp.~509--533.

\bibitem{LeCroneSimonett20}
{\sc J.~LeCrone and G.~Simonett}, {\em On quasilinear parabolic equations and
  continuous maximal regularity}, Evol. Equ. Control Theory, 9 (2020),
  pp.~61--86.

\bibitem{Lu84}
{\sc A.~Lunardi}, {\em Abstract quasilinear parabolic equations}, Math. Ann.,
  267 (1984), p.~395–415.

\bibitem{Lu85}
\leavevmode\vrule height 2pt depth -1.6pt width 23pt, {\em Asymptotic
  exponential stability in quasilinear parabolic equations}, Nonlinear Anal., 9
  (1985), p.~563–586.

\bibitem{Lu85b}
\leavevmode\vrule height 2pt depth -1.6pt width 23pt, {\em Global solutions of
  abstract quasilinear parabolic equations}, J. Differential Equations, 58
  (1985), p.~228–242.

\bibitem{Lu87}
\leavevmode\vrule height 2pt depth -1.6pt width 23pt, {\em On the local
  dynamical system associated to a fully nonlinear abstract parabolic
  equation}, in Nonlinear analysis and applications ({A}rlington, {T}ex.,
  1986), vol.~109 of Lecture Notes in Pure and Appl. Math., Dekker, New York,
  1987, p.~319–326.

\bibitem{L95}
\leavevmode\vrule height 2pt depth -1.6pt width 23pt, {\em {Analytic Semigroups
  and Optimal Regularity in Parabolic Problems}}, {Progress in Nonlinear
  Differential Equations and their Applications, 16}, Birkhäuser Verlag,
  Basel, 1995.

\bibitem{MFJLODS2004}
{\sc J.~Mathiesen, J.~Ferkinghoff-Borg, M.~H. Jensen, M.~Levinsen, P.~Olesen,
  D.~Dahl-Jensen, and A.~Svenson}, {\em Dynamics of crystal formation in the
  greenland {NorthGRIP} ice core}, J. Glaciol., 50 (2004), pp.~325--328.

\bibitem{MR23}
{\sc B.-V. Matioc and L.~Roberti}, {\em Weak and classical solutions to an
  asymptotic model for atmospheric flows}, J. Differential Equations, 367
  (2023), pp.~603--624.

\bibitem{MW_MOFM20}
{\sc B.-V. Matioc and {\relax Ch}.~Walker}, {\em On the principle of linearized
  stability in interpolation spaces for quasilinear evolution equations},
  Monatsh. Math., 191 (2020), pp.~615--634.

\bibitem{MW_PRSE}
\leavevmode\vrule height 2pt depth -1.6pt width 23pt, {\em Well-posedness of
  quasilinear parabolic equations in time-weighted spaces}, Proc. Roy. Soc.
  Edinburgh Sect. A, to appear (preprint (2023)).

\bibitem{OFJM2005}
{\sc P.~Olesen, J.~Ferkinghoff-Borg, M.~H. Jensen, and J.~Mathiesen}, {\em
  Diffusion, fragmentation, and coagulation processes: Analytical and numerical
  results}, Phys. Rev. E, 72 (2005), p.~031103.

\bibitem{PW17}
{\sc J.~Pr\"{u}ss and M.~Wilke}, {\em Addendum to the paper ``{O}n quasilinear
  parabolic evolution equations in weighted {$L_p$}-spaces {II}''
  [{MR}3250797]}, J. Evol. Equ., 17 (2017), pp.~1381--1388.

\bibitem{P02}
{\sc J.~Prüss}, {\em Maximal regularity for evolution equations in
  {$L_p$}-spaces}, Conf. Semin. Mat. Univ. Bari,  (2002), p.~1–39 (2003).

\bibitem{PS16}
{\sc J.~Prüss and G.~Simonett}, {\em Moving Interfaces and Quasilinear
  Parabolic Evolution Equations}, vol.~105 of Monographs in Mathematics,
  Birkhäuser/Springer, [Cham], 2016.

\bibitem{PSW18}
{\sc J.~Prüss, G.~Simonett, and M.~Wilke}, {\em Critical spaces for
  quasilinear parabolic evolution equations and applications}, J. Differential
  Equations, 264 (2018), p.~2028–2074.

\bibitem{QS19}
{\sc P.~Quittner and P.~Souplet}, {\em Superlinear Parabolic Problems},
  Birkh\"auser Advanced Texts: Basler Lehrb\"ucher. [Birkh\"auser Advanced
  Texts: Basel Textbooks], Birkh\"auser/Springer, Cham, second~ed., 2019.
\newblock Blow-up, global existence and steady states.

\bibitem{We81}
{\sc F.~B. Weissler}, {\em Existence and nonexistence of global solutions for a
  semilinear heat equation}, Israel J. Math., 38 (1981), pp.~29--40.

\end{thebibliography}

\end{document}